\newtheorem{thm}{Theorem}[section]
\newtheorem{cor}[thm]{Corollary}
\newtheorem{lem}[thm]{Lemma}
\newtheorem{prop}[thm]{Proposition}
\theoremstyle{definition}
\newtheorem{defn}[thm]{Definition}
\newtheorem{rem}[thm]{Remark}
\newtheorem*{ack}{Acknowledgments}
\numberwithin{equation}{section}
\author[Brasco]{Lorenzo Brasco}
\address[L.\ Brasco]{Dipartimento di Matematica e Informatica
\newline\indent
Universit\`a degli Studi di Ferrara
\newline\indent
Via Machiavelli 35, 44121 Ferrara, Italy}
\email{lorenzo.brasco@unife.it}
\author[Gonz\'alez]{Mar\'ia del Mar Gonz\'alez}
\address[M.\ Gonz\'alez]{Departamento de Matem\'aticas
\newline\indent
Universidad Aut\'onoma de Madrid and ICMAT\\
\newline\indent
Campus de Cantoblanco, 28049 Madrid, Spain}
\email{mariamar.gonzalezn@uam.es}
\author[Ispizua]{Mikel Ispizua}
\address[M.\ Ispizua]{Departamento de Matem\'aticas
\newline\indent
Universidad Aut\'onoma de Madrid\\
\newline\indent
Campus de Cantoblanco, 28049 Madrid, Spain}
\email{mikel.ispizua@uam.es}
\title{A Steklov version of the torsional rigidity}
\subjclass[2010]{35P15, 35G15, 49Q10}
\keywords{Torsional rigidity, Steklov eigenvalue problem, inradius, proximal radius.}
\begin{document}

\begin{abstract}
Motivated by the connection between the first eigenvalue of the Dirichlet-Laplacian and the torsional rigidity, the aim of this paper is to find a physically coherent and mathematically interesting new concept for boundary torsional rigidity, closely related to the Steklov eigenvalue. From a variational point of view, such a new object corresponds to the sharp constant for the trace embedding of $W^{1,2}(\Omega)$ into $L^1(\partial\Omega)$. We obtain various equivalent variational formulations, present some properties of the state function and obtain some sharp geometric estimates, both for planar simply connected sets and for convex sets in any dimension.
\end{abstract}

\maketitle
\begin{center}
\begin{minipage}{10cm}
\small
\tableofcontents
\end{minipage}
\end{center}

\section{Introduction}

\subsection{Background}
In this paper we introduce a boundary version for the torsional rigidity which is
modeled on the trace Sobolev embedding $W^{1,2}(\Omega)\hookrightarrow L^1(\partial\Omega)$ for an open bounded set $\Omega\subset \mathbb R^N $, with Lipschitz boundary. This will be also closely related to the Steklov eigenvalue problem. Before we give the precise definitions in Section \ref{subsection:definitions}, let us briefly recall some facts about the usual {\it torsional rigidity} and the {\it Steklov eigenvalue problem}.
\vskip.2cm\noindent
Let us consider an isotropic elastic cylindrical beam whose cross-section is represented by an open bounded simply connected set $\Omega$ in $\mathbb{R}^2$, with boundary $\partial\Omega$. The longitudinal axis of the beam is the $z$-axis, with the cross-sections perpendicular to it. The formulation of  the so-called {\it torsion problem} began at the end of the 18th century, provided by Coulomb, for bars with circular cross section. In this case, it is assumed that the cross-sections rotate around $z$-axis as a rigid body under an applied torque. Nevertheless, because of the complex stress distribution in the bar, this does not happen when the cross-sections do not have circular symmetry and the problem must be reformulated.

The correct statement for beams of general shape was given by the French mechanician and mathematician A. B. de Saint-Venant, who proposed that the deformation in a twisted beam consists in two phenomena: the rotation of the projections in the $x,y$ plane of the cross-sections as rigid bodies, as in the circular case; and the \emph{warping}, equal for all the cross-sections, which do not remain plane after rotation. The distribution of stress generated in the beam due to a an applied torque is determined by the \emph{stress function}, denoted by $v_\Omega$, which satisfies
\begin{equation}
\label{torsion_original}
	\left\{\begin{array}{rcll}
		-\Delta v_\Omega&=&1,& \text{ in }\Omega,\\
	 v_\Omega &=&0,& \text{on }\partial\Omega.
		\end{array}
	\right.
\end{equation}

The total resultant torque due to this stress function is called \emph{torsional rigidity} and can be expressed as

\begin{equation*}
	\mathcal T(\Omega)=\int_{\Omega}v_\Omega\,dx,
\end{equation*}
or equivalently,
\begin{equation*}\label{torsional_originial_variational}
	\mathcal T(\Omega)=\sup_{v|_{\partial\Omega}=0 } \frac{\displaystyle\left(\int_{\Omega}v\,dx\right)^2}{\displaystyle\int_{\Omega}|\nabla v|^2\,dx}.
\end{equation*}
We refer the reader to \cite{LaLi} for a thorough description of this physical model.
\par
Saint-Venant himself stated in 1856 that the simply connected cross-section with maximal torsional rigidity is the circle. In other words, he conjectured the validity of the following inequality
\begin{equation*}
\label{SV}
\mathcal T(\Omega)\le \mathcal T(\Omega^*),
\end{equation*}
where $\Omega^*$ is any circle having the same area as $\Omega$.
This statement was rigorously established, almost a hundred years latter, by G. P\'{o}lya using a symmetrization method \cite{polya1948torsional} and by E. Makai \cite{makai666}. The latter also established the identification of equality cases, by means of a clever quantitative improvement of the inequality \eqref{SV}.
\par
The torsional problem has been extensively studied for its mathematical interest, apart of his obvious physical importance: in addition to the classical book \cite{PolyaandSzego}, we refer for example to \cite{HP, KJ, payne1961some, Polya-Weinstein}, for classical results, as well as to \cite{banuelos2002torsional,BBP,van2016polya,vanMa,BrBuPr2,BrBuPr}, for more recent studies.

It is worth noticing that \eqref{torsional_originial_variational} has a close connection with the variational characterization of the first eigenvalue of the Dirichlet-Laplacian and, in fact, similar techniques are used to obtain geometric estimates for both quantities (see the references above).
\vskip.2cm\noindent
We now come to the Steklov eigenvalue problem: this was originally posed by V.\,A. Steklov at the turn of the 20th century as
\begin{equation*}
\label{steklovoriginal}
	\left\{\begin{array}{rcll}
		-\Delta u&=&0,&\text{ in }\Omega,\\
		\langle \nabla u,\nu_\Omega\rangle &=&\sigma \, u,&\text{ on }\partial\Omega,
	\end{array}\right.
\end{equation*}
where $\nu_\Omega$ is the unit outer normal vector.  It plays an important role in the study of the so called \emph{sloshing problem}, in which the eigenvalues and eigenvectors of a mixed Steklov problem correspond to the fundamental frequencies and modes of vibration, respectively, of the surface of an inviscid, incompressible and heavy liquid in a container (see \cite{kuznetsov2014legacy}). Apart from this interpretation, the Steklov problem has a large number of applications in physics and engineering. For instance, it describes stationary states of a heat distribution on a bounded domain $\Omega$, where the heat flow at the boundary $\Omega$ depends on the temperature at this boundary.

Under a purely mathematical point of view, the Steklov problem has attracted a lot of interest, in particular, in the framework of \emph{spectral shape optimization}. In fact, the connection between the Steklov eigenvalues and the geometry of the domain presents some peculiarities that can not be observed in other eigenvalue problems.
In 1954, R. Weinstock showed in \cite{Wei} that, among all simply connected plane domains with given perimeter, the disc maximizes the first non-zero eigenvalue. Since then, many efforts have been devoted in order to give geometric estimates for the spectrum of the Steklov-Lapalcian: without any attempt of completenes, we cite for example \cite{BDR, brock2001isoperimetric,Bucur-Ferone-Nitsch-Trombetti,escobar1999isoperimetric,escobar2000comparison,fraser2011first,girouard2014spectral,payne1956new}.
\par
From a different point of view, the Steklov eigenvalues can be seen as the eigenvalues of the \textit{Dirichlet-to-Neumann operator}
\[
\begin{array}{ccccc}
\mathcal{D}&:&W^{\frac{1}{2},2}(\partial\Omega)&\to &W^{-\frac{1}{2},2}(\partial\Omega), \\
&& f&\mapsto & \langle \nabla\mathrm{Har}(f),\nu_\Omega\rangle,\\
\end{array}
\]
where $\mathrm{Har}(f)$ is the harmonic extension of $f$ to $\Omega$. This concept is important in many fields as electric impedance tomography, cloaking and so on (see \cite[Section 5]{kuznetsov2014legacy}).

\subsection{The boundary torsional rigidity}\label{subsection:definitions}

Let $\Omega\subset\mathbb{R}^N$ be an open bounded set with Lipschitz boundary. Given $\delta>0$, we introduce a new object, the \emph{boundary} $\delta$-\emph{torsional rigidity}, which is defined as
\begin{equation}
\label{torsionfunctional}
T(\Omega;\delta)=\sup_{\varphi\in W^{1,2}(\Omega)\setminus\{0\}}\frac{\displaystyle\left(\int_{\partial\Omega} \varphi\,d\mathcal{H}^{N-1}\right)^2}{\displaystyle\int_{\Omega}{|\nabla \varphi|^2}\,dx+ \delta^2\,\int_{\Omega}\varphi^2\,dx}.
\end{equation}
One may check (see Proposition \ref{prop:torsion} below) that the supremum is attained by any multiple of a positive function $u_{\Omega,\delta}\in W^{1,2}(\Omega)$ which is the weak solution of the boundary value problem
\begin{equation}\label{problem-introduction}
\left\{\begin{array}{rcll}
-\Delta u+\delta^2\,u&=&0,&\text{in }\Omega,\\
\langle \nabla u,\nu_\Omega\rangle &=&1,&\text{ on }\partial\Omega.
\end{array}\right.
\end{equation}
The function $u_{\Omega,\delta}$ is called the \emph{boundary $\delta-$torsion function} of $\Omega$. It is not difficult to see that we have
\begin{equation}
	\label{torsionintegral}
T(\Omega;\delta) = \int_{\partial\Omega}{u_{\Omega,\delta}}\,d\mathcal{H}^{N-1}.
\end{equation}
We note here that, while interior regularity for equation \eqref{problem-introduction} is not an issue, regularity up to the boundary is more delicate under our standing assumptions on $\Omega$. Thus, the Neumann condition in \eqref{problem-introduction} has to be intended only in weak sense (we refer to Remark  \ref{rem:smoothness}). 
\par
A strong motivation for the definition of $T(\Omega;\delta)$ comes from its relation to the ($\delta-$modified) Steklov problem. In particular, the relation to the first Steklov eigenvalue is given precisely in Proposition \ref{prop:relation-stekov}. The latter can be seen as  the Steklov version of the classical P\`olya inequality relating torsional rigidity, volume and first eigenvalue of the Dirichlet-Laplacian, see \cite[Chapter V, Section 4]{PolyaandSzego}.

Additionally, we will discuss the asymptotic behavior of both $u_{\Omega,\delta}$ and $T(\Omega;\delta)$, as the positive parameter $\delta$ goes to $0$. We will see in Theorem \ref{thm:asymptotics} that both quantities (once scaled by the dimensional factor $\delta^2$) converge to a geometric quantity, which is a kind of {\it isoperimetric--type ratio}.

\subsection{Physical interpretation}
\label{subsection:critical membrane analogy}

The \textit{membrane analogy}, also known as \textit{soap-film analogy}, was discovered by R. Prandtl in 1903. It states that the equations governing the stress distribution on a beam in torsion are the same as the ones describing the shape of a membrane deformed under a pressure applied across it (see \cite[Section 9.3.4]{sadd2009elasticity} and \cite{timoschenko1922membrane}). This fact establishes an analogy between the usual torsional rigidity and the volume under a forced membrane.

More precisely, let $\Omega\subset\mathbb{R}^2$ be an open bounded connected set. For ease of presentation, we suppose in what follows that $\Omega$ has a smooth boundary, for example of class $C^2$. Let us consider a thin membrane stretched over the rigid frame represented by $\partial\Omega$, with uniform tension $\mathcal{N}$ and under an external pressure $p$. Then, by applying the equilibrium equations to this system, we get that the vertical displacement of the membrane $v(x, y)$ at a point $(x,y)\in\Omega$ solves
\begin{equation}\label{usual-torsion}
	\left\{\begin{array}{rcll}
		-\Delta v&=&p/\mathcal{N},&\text{in }\Omega,\\
		v& =&0,&\text{on }\partial\Omega.
	\end{array}\right.
\end{equation}
When the pressure $p$ is constant, we recognize the usual torsion problem \eqref{torsion_original}, up to a multiplicative constant.
\par
Here we are interested in a Neumann version of \eqref{usual-torsion}\normalcolor:
\begin{equation*}\label{torsion-Neumann}
	\left\{\begin{array}{rcll}
		-\Delta u+\delta^2\, u&=&f(x),&\text{ in }\Omega,\\
		\langle \nabla u,\nu_\Omega\rangle&=&0,&\text{ on }\partial\Omega.
	\end{array}\right.
\end{equation*}
In this case, the boundary of the membrane can move in the vertical direction but always keeps a horizontal angle. Moreover,  the applied pressure depends on both the displacement of the membrane and the $x$ position.

For every $\varepsilon>0$ small enough, let us define the $\varepsilon-$neighbourhood of the boundary
\[
\Omega_{\varepsilon}= \Big\{x\in\Omega\, :\, d_\Omega(x)\le \varepsilon\Big\},
\]
and add a concentration effect near $\partial\Omega$ to the problem \eqref{torsion-Neumann}, that is,
\begin{equation}\label{torsionalproblemmembrane}
	\left\{\begin{array}{rcll}
		-\Delta u+\delta^2\, u&=&\dfrac{C}{\varepsilon}\,\chi_{\Omega_\varepsilon},&\text{ in }\Omega,\\
		&&&\\
		\langle \nabla u,\nu_\Omega\rangle&=&0,&\text{ on }\partial\Omega.
	\end{array}\right.
\end{equation}
The precise limit behavior of this problem is proven in  \cite[Theorem 4.1]{AJR}.
Indeed, this shows that\footnote{With the notation of \cite{AJR}, our equation \eqref{torsionalproblemmembrane} corresponds to \cite[formula (1.4)]{AJR} with the choices
\[
a(x)\equiv 1,\quad c(x)=b(x)=V_\varepsilon(x)=j_\varepsilon(x)=g_\varepsilon\equiv 0,\quad \lambda=\delta^2,\quad f_\varepsilon(x)\equiv C,
\]
and $\Gamma=\partial\Omega$.
} by taking the limit as $\varepsilon$ goes to $0$, the unique solution $u_\varepsilon$ of \eqref{torsionalproblemmembrane} converges to the solution of
\[
\left\{\begin{array}{rcll}
	-\Delta u+\delta^2\, u&=&0,&\text{ in }\Omega,\\
	\langle \nabla u,\nu_\Omega\rangle&=&C,&\text{ on }\partial\Omega,
\end{array}\right.
\]
which is our problem \eqref{problem-introduction}.
\vskip.2cm\noindent

Another possible physical motivation is the following one. We consider the time evolution of the temperature $u$ of a uniform heat conductor $\Omega$, with an initial temperature equal to $0$ and subject to a constant heat flux at the boundary. For simplicity, we normalize such a constant flux to be $1$. In other words, the function $u$ solves
\[
\left\{\begin{array}{rcll}
\Delta u&=& u_t,& \mbox{ in }\Omega\times (0,+\infty),\\
u&=&0,& \mbox{ in } \Omega\times\{0\},\\
\langle \nabla u,\nu_\Omega\rangle&=&1,& \mbox{ on }\partial\Omega\in(0,+\infty).
\end{array}
\right.
\]
If we introduce the family of probability measures $\{p_\delta\}_{\delta>0}$ on the time interval $(0,+\infty)$, given by
\[
dp_\delta(t)=\delta^2\,\exp(-\delta^2\,t)\,dt,
\]
it is not difficult to see that for every $\delta>0$ the averaged (in time) temperature
\[
U_\delta(x):=
\int_0^{+\infty} u(x,t)\,dp_\delta(t)=\delta^2\,\int_0^{+\infty} u(x,t)\,e^{-\delta^2\,t}\,dt,
\]
exactly solves \eqref{problem-introduction}, at least formally.
\par
In this way, we can think $u_{\Omega,\delta}$ as a sort of averaged temperature of the heat conductor $\Omega$; correspondingly, the quantity $T(\Omega;\delta)$ can be seen as an {\it averaged heat content} of the boundary, thanks to \eqref{torsionintegral}. This interpretation is quite close in spirit to the probabilistic description of the usual torsional rigidity, in terms of Brownian motion. We refer for example to \cite{banuelos2002torsional, PS}.

\subsection{Main results: geometric estimates}

The exact expressions of $T(\Omega;\delta)$ and $u_{\Omega,\delta}$ can be computed explicitly only in a few special geometries (balls, spherical shells or the hyperrectangles, see Section \ref{sec:4}). Thus, for a general set $\Omega$, it is important to find (possibly sharp) bounds in terms of known geometric objects. This is the main objective of this paper.
\par
For open simply connected sets in two dimensions, it is possible to use the {\it conformal transplantation technique}: this amounts to use the Riemann mapping theorem to conformally transplant test functions form the disk to any bounded, simply connected $\Omega\subset \mathbb R^2$. This is a technique already fruitfully exploited in
the literature, in order to give geometric estimates for eigenvalues of planar sets, see for example  \cite{Po55, PolyaandSzego, Sze} and \cite{Wei}. The key point here is that the Dirichlet energy is a conformal invariant. By carefully estimating the other norms contained in the definition of $T(\Omega;\delta)$, in Theorem \ref{thm:lower-bound-plane} we are able to
 give a sharp lower bound for $T(\Omega;\delta)$ in terms of the corresponding quantity for the disk and what we call {\it boundary distortion radius} of $\Omega$, defined precisely in Definition \ref{def:boundary-distortion}.
\par
On the contrary, in higher dimensions conformal mappings are not available in general and we need to modify our approach. We restrict in this case our study to the class of convex sets. First, in Theorem \ref{thm-lower-bound}, we prove a sharp lower bound on $T(\Omega;\delta)$ using the {\it method of interior parallels} introduced by Makai \cite{makai1959bounds,Mak} and P\`olya \cite{polya1960two}. We were inspired by an estimate of  P\'olya, which provides a lower bound on the usual torsional rigidity in terms of volume and perimeter, see \cite{polya1960two}. We point out that the convexity assumption could be replaced by the requirement that the distance function $d_\Omega$ to $\partial\Omega$ is weakly superharmonic, which in general is a weaker property, see Remark \ref{remark:weakly-superharmonic} below.
\par
Finally, a sharp upper bound for $T(\Omega;\delta)$ in the convex case is obtained in Theorem \ref{thm:upper-bound-convex}: this involves geometric quantities such as the inradius and the {\it proximal radius} of $\Omega$. A nice by-product of our arguments is the sharp estimate of Corollary \ref{cor:geometric}, which involves four different geometric quantities: the volume of $\Omega$, the $(N-1)-$dimensional measure of its boundary, its proximal radius and its inradius.
\subsection{Plan of the paper} In Section \ref{section:definition} we set the main notations and discuss the first properties of the boundary torsional rigidity. We discuss well-posedness of the problem and present various equivalent variational characterizations, in the vein of the classical case. We also obtain some basic upper and lower bounds on $T(\Omega;\delta)$. Then, in Section \ref{sec:3} we discuss some properties of the boundary $\delta-$torsion function $u_{\Omega,\delta}$ of a set and its asymptotic behavior as $\delta$ goes to $0$. In Section \ref{sec:4} we compute the exact shape of the boundary $\delta-$torsion function, for some special sets: the ball, the spherical shell and the hyperrectangle. Section \ref{sec:5} is entirely devoted to prove sharp geometric estimates on the boundary $\delta-$torsional rigidity. Finally, the Appendix contains some technical results on the proximal radius of a convex set.

\begin{ack}
L. Brasco gratefully acknowledges the financial support of the project FAR 2019 of the University of Ferrara.
\par
M. Gonz\'alez  acknowledges financial support from the Spanish  Government: MTM2017-85757-P, PID2020-113596GB-I00, RED2018-102650-T funded by MCIN/AEI/ 10.13039/ 501100011033, and
 the ``Severo Ochoa Programme for Centers of Excellence in R\&D'' (CEX2019-000904-S).
\par
M. Ispizua is supported by the Spanish  Government grants MTM2017-85757-P and PID2020-113596GB-I00.
\end{ack}

\section{Boundary torsional rigidity}
\label{section:definition}

\subsection{Notation}

In what follows, we will always consider $N\ge 2$. We will indicate by $B_R(x_0)$ the $N-$dimensional open ball with radius $R>0$ and center $x_0\in\mathbb{R}^N$. When the center coincides with the origin, we will simply write $B_R$. Finally, we will denote by $B$ the ball with unit radius, centered ad the origin. We also set
\[
|B|=\omega_N.
\]
For $a\in\mathbb{R}$, we will use the notation
\[
a_+=\max\{a,\, 0\}.
\]
We will use the symbol $\mathcal{H}^{N-1}$ to indicate the $(N-1)-$dimensional Hausdorff measure.
\vskip.2cm\noindent
 Let $\Omega\subset \mathbb{R}^N$ be a open bounded set, with Lipschitz boundary. We denote by $\nu_{\Omega}$ its unit outer normal vector, which is well-defined $\mathcal{H}^{N-1}-$almost everywhere on $\partial\Omega$. By $d_\Omega$ we will indicate the {\it distance function} given by
\[
d_{\Omega}(x)=\min_{y\in\partial\Omega}\left|x-y\right|,\qquad \mbox{ for } x\in\Omega.
\]
It is well-known that under the standing assumptions on $\Omega$, we have the continuous trace embedding
\[
W^{1,2}(\Omega)\hookrightarrow L^q(\partial\Omega),
\]
for every $1\le q\le 2^\#$, where
\[
2^\#=\left\{\begin{array}{ll}
\dfrac{2\,N-2}{N-2},& \mbox{ if }N\ge 3,\\
&\\
\mbox{ any finite exponent}, & \mbox{ if } N=2,
\end{array}
\right.
\]
see for example \cite[Theorems 6.4.1 \& 6.4.2]{KJF}. Moreover, such an embedding is compact, whenever $1\le q<2^\#$, see for example \cite[Remark 6.10.5, points (i) \& (ii)]{KJF}. For every admissible $q$, we will set
\begin{equation}
\label{boundary_constant}
\eta_q(\Omega)=\inf_{\varphi\in W^{1,2}(\Omega)}\Big\{\|\varphi\|_{W^{1,2}(\Omega)}^2\, :\, \|\varphi\|_{L^q(\partial\Omega)}=1\Big\}>0,
\end{equation}
which is the sharp constant for such a trace embedding.

\subsection{First properties}

Given $\delta>0$, we have defined the boundary torsional rigidity by \eqref{torsionfunctional}.
It is not difficult to see that for every $t>0$ we have the scaling law
\begin{equation}
\label{scaling}
T\left(t\,\Omega;\frac{\delta}{t}\right)=t^N\,T(\Omega;\delta).
\end{equation}
It is also easy to check that $T(\Omega,\delta)$ is related to the best Sobolev--type constant in  \eqref{boundary_constant}. More precisely:
\begin{lem}
\label{lem:basics}
Under the previous assumptions, we have $T(\Omega;\delta)<+\infty$
and the supremum in \eqref{torsionfunctional} is attained.
Moreover, the following estimates hold
\begin{equation}
\label{upperbase}
\frac{1}{\delta^2}\,\frac{(\mathcal{H}^{N-1}(\partial\Omega))^2}{|\Omega|}\le T(\Omega;\delta)\le \frac{1}{\min\{1,\delta^2\}}\,\frac{1}{\eta_1(\Omega)},
\end{equation}
where $\eta_1(\Omega)$ is defined in \eqref{boundary_constant}.
\end{lem}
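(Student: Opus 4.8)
The plan is to handle the three assertions — finiteness of $T(\Omega;\delta)$, attainment of the supremum, and the two-sided estimate \eqref{upperbase} — separately, using the trace embedding recalled in the previous subsection as the main tool. First I would establish finiteness together with the upper bound in \eqref{upperbase}. The key observation is that the denominator in \eqref{torsionfunctional} controls the full $W^{1,2}(\Omega)$ norm from below: indeed $\int_\Omega|\nabla\varphi|^2\,dx+\delta^2\int_\Omega\varphi^2\,dx \ge \min\{1,\delta^2\}\,\|\varphi\|_{W^{1,2}(\Omega)}^2$. On the other hand, the numerator is $\big(\int_{\partial\Omega}\varphi\,d\mathcal{H}^{N-1}\big)^2 \le \|\varphi\|_{L^1(\partial\Omega)}^2$ by Jensen/Hölder (simply taking absolute values), and by definition of $\eta_1(\Omega)$ in \eqref{boundary_constant} we have $\|\varphi\|_{L^1(\partial\Omega)}^2 \le \eta_1(\Omega)^{-1}\,\|\varphi\|_{W^{1,2}(\Omega)}^2$. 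Chaining these two inequalities gives, for every $\varphi\in W^{1,2}(\Omega)\setminus\{0\}$,
\[
\frac{\big(\int_{\partial\Omega}\varphi\,d\mathcal{H}^{N-1}\big)^2}{\int_\Omega|\nabla\varphi|^2\,dx+\delta^2\int_\Omega\varphi^2\,dx}
\le \frac{1}{\min\{1,\delta^2\}}\,\frac{1}{\eta_1(\Omega)},
\]
which yields both $T(\Omega;\delta)<+\infty$ and the right-hand inequality of \eqref{upperbase}.

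Next I would prove attainment by the direct method of the calculus of variations. Since the quotient is $0$-homogeneous, I can restrict to the constraint $\|\varphi\|_{L^1(\partial\Omega)}=1$ (or rather $\int_{\partial\Omega}\varphi\,d\mathcal{H}^{N-1}=1$, after noting one may replace $\varphi$ by $|\varphi|$, which does not increase the denominator and does not decrease the numerator) and minimize the denominator $\int_\Omega|\nabla\varphi|^2\,dx+\delta^2\int_\Omega\varphi^2\,dx$. A minimizing sequence is bounded in $W^{1,2}(\Omega)$ by the coercivity estimate above; extracting a weakly convergent subsequence, weak lower semicontinuity of the (convex) quadratic denominator and the \emph{compactness} of the trace embedding $W^{1,2}(\Omega)\hookrightarrow L^1(\partial\Omega)$ (recalled just before \eqref{boundary_constant}, valid since $1<2^\#$) guarantee that the weak limit still satisfies the boundary constraint and attains the infimum of the denominator; hence the supremum in \eqref{torsionfunctional} is attained. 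The compactness of the trace embedding is the one genuinely non-elementary ingredient, but it is quoted from the references, so this step is routine.

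Finally, for the lower bound in \eqref{upperbase} I would simply plug in the explicit test function $\varphi\equiv 1$ into \eqref{torsionfunctional}: then $\int_{\partial\Omega}\varphi\,d\mathcal{H}^{N-1}=\mathcal{H}^{N-1}(\partial\Omega)$, $\int_\Omega|\nabla\varphi|^2\,dx=0$ and $\delta^2\int_\Omega\varphi^2\,dx=\delta^2\,|\Omega|$, giving
\[
T(\Omega;\delta)\ge \frac{(\mathcal{H}^{N-1}(\partial\Omega))^2}{\delta^2\,|\Omega|},
\]
which is exactly the left-hand side of \eqref{upperbase}. I do not expect any serious obstacle here; the only points requiring a little care are the reduction to nonnegative competitors in the attainment argument and the invocation of the correct compactness statement for the trace operator, both of which are standard.
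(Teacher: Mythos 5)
Your proposal is correct and follows essentially the same route as the paper: the upper bound and finiteness via $\int_\Omega|\nabla\varphi|^2\,dx+\delta^2\int_\Omega\varphi^2\,dx\ge\min\{1,\delta^2\}\,\|\varphi\|_{W^{1,2}(\Omega)}^2$ combined with the definition of $\eta_1(\Omega)$, attainment by the direct method applied to the equivalent constrained problem (using Rellich--Kondra\v{s}ov and the compact trace embedding into $L^1(\partial\Omega)$), and the lower bound by testing with $\varphi\equiv 1$. No gaps.
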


\begin{proof}
We first observe that
\[
T(\Omega;\delta)\le \frac{1}{\min\{1,\delta^2\}}\, \sup_{\varphi\in W^{1,2}(\Omega)\setminus\{0\}}\frac{\displaystyle\left(\int_{\partial\Omega} \varphi\,d\mathcal{H}^{N-1}\right)^2}{\displaystyle\int_{\Omega}{|\nabla \varphi|^2}\,dx+ \int_{\Omega}\varphi^2\,dx}=\frac{1}{\min\{1,\delta^2\}}\,\frac{1}{\eta_1(\Omega)}.
\]
This shows that $T(\Omega;\delta)<+\infty$ and also proves the upper bound in \eqref{upperbase}.
\vskip.2cm\noindent
Existence of an extremal for $T(\Omega;\delta)$ can be proved by appealing to the Direct Method in the Calculus of Variations. It is sufficient to observe that the maximization in \eqref{torsionfunctional} is equivalently performed on the set $W^{1,2}(\Omega)\setminus W^{1,2}_0(\Omega)$, then a maximizer of \eqref{torsionfunctional} can be obtained by solving
\begin{equation}
\label{constrained}
\inf_{u\in W^{1,2}(\Omega)} \left\{\int_{\Omega}{|\nabla \varphi|^2}\,dx+ \delta^2\,\int_{\Omega}\varphi^2\,dx\, :\, \int_{\partial\Omega} \varphi\,d\mathcal{H}^{N-1}=1\right\}.
\end{equation}
Any minimizing sequence $\{\varphi_n\}_{n\in\mathbb{N}}\subset W^{1,2}(\Omega)$ for this problem is such that
\[
\int_{\Omega}{|\nabla \varphi_n|^2}\,dx+\int_{\Omega}\varphi^2_n\,dx\le \frac{C}{
\min\{1,\delta^2\}}\qquad \mbox{ and } \qquad  \int_{\partial\Omega} \varphi_n\,d\mathcal{H}^{N-1}=1,
\]
for some $C>0$ independent of $n$.
\par
The first property, in conjuction with the Rellich-Kondra\v{s}ov Theorem and the compactness of the trace embedding $W^{1,2}(\Omega)\hookrightarrow L^1(\partial\Omega)$, implies that there exists $u_0\in W^{1,2}(\Omega)$ such that $\varphi_n$ converges to $u_0$ (up to a subsequence), weakly in $W^{1,2}(\Omega)$, strongly in $L^2(\Omega)$ and strongly in $L^1(\partial\Omega)$. In particular, $u_0$ has still unit boundary integral.
Thus the limit function $u_0$ is still admissible in \eqref{constrained} and, thanks to the lower semicontinuity of the functional, we get that $u_0$ is the desired solution.
\vskip.2cm\noindent
Finally, in order to prove the lower bound \eqref{upperbase}, it is sufficient to use the characteristic function of $\Omega$ as a test function in the Rayleigh--type quotient which defines $T(\Omega;\delta)$.
\end{proof}
\begin{rem}
As simple as it may appear, the lower bound in \eqref{upperbase} is actually sharp. This will be shown in Remark \ref{rem:sharpbase}, as a consequence of the asymptotic behavior of the quantity
\[
\delta^2\,T(\Omega;\delta),\qquad \mbox{ as } \delta\searrow 0,
\]
see Theorem \ref{thm:asymptotics} below.
For the moment, we observe that from \eqref{upperbase} we get that
\[
\lim_{\delta\to 0^+} T(\Omega;\delta)=+\infty,
\]
and $T(\Omega;\delta)$ diverges precisely at a rate $\delta^{-2}$.
\end{rem}
As for the usual torsional rigidity, $T(\Omega;\delta)$ can be equivalently defined through an unconstrained concave maximization problem:
\begin{prop}
\label{prop:torsion}
Let $\delta>0$ and let $\Omega\subset\mathbb{R}^N$ be an open bounded set, with Lipschitz boundary. Then we have
\begin{equation}
\label{unconstrained-min}
T(\Omega;\delta)=\sup_{\varphi\in W^{1,2}(\Omega)}\left\{2\,\int_{\partial\Omega} \varphi\,d\mathcal{H}^{N-1} -\int_{\Omega}|\nabla \varphi|^2\,dx-\delta^2\,\int_{\Omega}\varphi^2\,dx\right\}.
\end{equation}
Moreover, the supremum in \eqref{unconstrained-min} is uniquely attained by a non-negative function $u_{\Omega,\delta}\in W^{1,2}(\Omega)$, which is the weak solution of the Neumann boundary value problem
\[
\left\{\begin{array}{rcll}
-\Delta u+\delta^2\,u&=&0,&\text{in }\Omega,\\
\langle \nabla u,\nu_\Omega\rangle &=&1,&\text{on }\partial\Omega.
\end{array}\right.
\]
In other words, $u_{\Omega,\delta}$ satisfies
\begin{equation}
\label{torsionalproblem}
\int_\Omega \langle \nabla u_{\Omega,\delta},\nabla \varphi\rangle\,dx+\delta^2\,\int_\Omega u_{\Omega,\delta}\,\varphi\,dx=\int_{\partial\Omega} \varphi\,d\mathcal{H}^{N-1},\quad \mbox{ for every } \varphi\in W^{1,2}(\Omega).
\end{equation}
Finally, we also have \eqref{torsionintegral}, this is
\begin{equation*}
T(\Omega;\delta) = \int_{\partial\Omega}{u_{\Omega,\delta}}\,d\mathcal{H}^{N-1}.
\end{equation*}
\end{prop}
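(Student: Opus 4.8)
The plan is to establish the four assertions in sequence. Throughout, abbreviate
\[
Q(\varphi)=\int_\Omega|\nabla\varphi|^2\,dx+\delta^2\int_\Omega\varphi^2\,dx,\qquad L(\varphi)=\int_{\partial\Omega}\varphi\,d\mathcal{H}^{N-1},
\]
so that $Q$ is a positive definite quadratic form on $W^{1,2}(\Omega)$ with $\min\{1,\delta^2\}\,\|\varphi\|_{W^{1,2}(\Omega)}^2\le Q(\varphi)\le\max\{1,\delta^2\}\,\|\varphi\|_{W^{1,2}(\Omega)}^2$, and $L$ is a bounded linear functional on $W^{1,2}(\Omega)$ by the trace embedding recalled in the previous subsection. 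Write $\mathcal{F}(\varphi)=2\,L(\varphi)-Q(\varphi)$ for the functional appearing in \eqref{unconstrained-min}.

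First I would prove the equivalence of the two variational problems by the standard homogenization trick. For a fixed $\varphi$ with $L(\varphi)\ne 0$, the parabola $t\mapsto 2\,t\,L(\varphi)-t^2\,Q(\varphi)$ attains its maximum over $t\in\mathbb{R}$ at $t=L(\varphi)/Q(\varphi)$, with value $L(\varphi)^2/Q(\varphi)$; applying this with $t\varphi$ in place of $\varphi$ gives
\[
\sup_{\varphi\in W^{1,2}(\Omega)}\mathcal{F}(\varphi)\ \ge\ \sup_{\varphi:\,L(\varphi)\neq 0}\frac{L(\varphi)^2}{Q(\varphi)}\ =\ T(\Omega;\delta),
\]
where the last equality holds because the test functions with $L(\varphi)=0$ contribute only the value $0$ to the Rayleigh quotient in \eqref{torsionfunctional}, which is already dominated by the value produced by $\varphi=\chi_\Omega$. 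Conversely, for any $\varphi$: if $L(\varphi)\le 0$ then $\mathcal{F}(\varphi)\le 0\le T(\Omega;\delta)$, while if $L(\varphi)>0$ then $\mathcal{F}(\varphi)\le\max_{t\in\mathbb{R}}\big(2\,t\,L(\varphi)-t^2\,Q(\varphi)\big)=L(\varphi)^2/Q(\varphi)\le T(\Omega;\delta)$. This yields the reverse inequality and hence \eqref{unconstrained-min}.

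Next I would produce the maximizer of $\mathcal{F}$ by the Direct Method. Coercivity follows from
\[
\mathcal{F}(\varphi)\le 2\,\|\varphi\|_{L^1(\partial\Omega)}-\min\{1,\delta^2\}\,\|\varphi\|_{W^{1,2}(\Omega)}^2\le 2\,C\,\|\varphi\|_{W^{1,2}(\Omega)}-\min\{1,\delta^2\}\,\|\varphi\|_{W^{1,2}(\Omega)}^2,
\]
with $C$ the trace constant, so that maximizing sequences are bounded in $W^{1,2}(\Omega)$. Along a weakly convergent subsequence, $L$ passes to the limit by the compactness of the trace embedding $W^{1,2}(\Omega)\hookrightarrow L^1(\partial\Omega)$ already invoked in Lemma~\ref{lem:basics}, whereas $Q$ is weakly lower semicontinuous, being convex and continuous; hence $\mathcal{F}$ is weakly upper semicontinuous and a maximizer $u_{\Omega,\delta}$ exists, and it is unique by the strict concavity of $\mathcal{F}$ (strict convexity of $Q$). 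Equation \eqref{torsionalproblem} is then the Euler--Lagrange equation: for every $\psi\in W^{1,2}(\Omega)$ the scalar map $t\mapsto\mathcal{F}(u_{\Omega,\delta}+t\,\psi)$ is differentiable with a maximum at $t=0$, and setting its derivative to zero gives precisely \eqref{torsionalproblem}. For the sign, I would use that $Q(|u_{\Omega,\delta}|)=Q(u_{\Omega,\delta})$ (since $|\nabla|u_{\Omega,\delta}||=|\nabla u_{\Omega,\delta}|$ a.e.\ and $|u_{\Omega,\delta}|^2=u_{\Omega,\delta}^2$) while $L(|u_{\Omega,\delta}|)\ge L(u_{\Omega,\delta})$, so $\mathcal{F}(|u_{\Omega,\delta}|)\ge\mathcal{F}(u_{\Omega,\delta})$; by uniqueness $|u_{\Omega,\delta}|=u_{\Omega,\delta}$, i.e.\ $u_{\Omega,\delta}\ge 0$ a.e.\ in $\Omega$. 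Finally, choosing $\varphi=u_{\Omega,\delta}$ in \eqref{torsionalproblem} gives $Q(u_{\Omega,\delta})=L(u_{\Omega,\delta})$, whence $T(\Omega;\delta)=\mathcal{F}(u_{\Omega,\delta})=2\,L(u_{\Omega,\delta})-Q(u_{\Omega,\delta})=L(u_{\Omega,\delta})$, which is \eqref{torsionintegral}.

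I do not expect a genuine obstacle here: the whole argument is a standard application of the Direct Method and convex analysis, resting on the compact trace embedding already exploited in Lemma~\ref{lem:basics}. The single point deserving a moment's attention is the equivalence step --- one must rule out that test functions with vanishing boundary integral (in particular the entire subspace $W^{1,2}_0(\Omega)$) could push $\mathcal{F}$ above $T(\Omega;\delta)$ --- and this is exactly what the elementary parabola optimization takes care of. Uniqueness of the maximizer is what upgrades the nonnegativity comparison into the identity $|u_{\Omega,\delta}|=u_{\Omega,\delta}$, and it costs nothing beyond the strict convexity of $Q$.
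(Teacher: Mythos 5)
Your proposal is correct and follows essentially the same route as the paper: the homogenization/parabola trick to identify the unconstrained supremum with the Rayleigh quotient, the Direct Method with the compact trace embedding for existence, strict concavity for uniqueness and for upgrading $\mathcal{F}(|u|)\ge\mathcal{F}(u)$ to $u\ge 0$, and the test function $\varphi=u_{\Omega,\delta}$ in the Euler--Lagrange equation for \eqref{torsionintegral}. The only (immaterial) difference is organizational: you handle the case $L(\varphi)\le 0$ directly in the parabola step, whereas the paper first restricts the maximization to $W^{1,2}(\Omega)\setminus W^{1,2}_0(\Omega)$ and replaces $\varphi$ by $|\varphi|$ in the boundary term.
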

\begin{proof}
The existence of a solution for the maximization problem in \eqref{unconstrained-min} follows again from the Direct Method in the Calculus of Variations.
Indeed, we first observe that by using the definition \eqref{boundary_constant} of $\eta_1(\Omega)$ and Young's inequality we get, for every $L>0$,
\[
\begin{split}
\mathfrak{F}(\varphi):=2\,\int_{\partial\Omega} \varphi\,d\mathcal{H}^{N-1}& -\int_{\Omega}|\nabla \varphi|^2\,dx-\delta^2\,\int_{\Omega}\varphi^2\,dx\\
&\le 2\,\int_{\partial\Omega} \varphi\,d\mathcal{H}^{N-1} -\min\{1,\delta^2\}\,\|\varphi\|^2_{W^{1,2}(\Omega)}\\
&\le \frac{2}{\sqrt{\eta_1(\Omega)}}\,\|\varphi\|_{W^{1,2}(\Omega)}-\min\{1,\delta^2\}\,\|\varphi\|^2_{W^{1,2}(\Omega)}\\
&\le \Big(L-\min\{1,\delta^2\}\Big)\,\|\varphi\|_{W^{1,2}(\Omega)}^2+\frac{1}{L\,\eta_1(\Omega)}.
\end{split}
\]
By choosing $L=\min\{1,\delta^2\}/2$, we  obtain
\begin{equation}
\label{coercive}
\mathfrak{F}(\varphi)\le -\frac{1}{C}\,\|\varphi\|^2_{W^{1,2}(\Omega)}+C,
\end{equation}
for some $C=C(N,\Omega,\delta)>0$. This shows that
\begin{equation*}
\sup_{\varphi\in W^{1,2}(\Omega)} \mathfrak{F}(\varphi)<+\infty.
\end{equation*}
Consequently, by the estimate \eqref{coercive}, every maximizing sequence $\{\varphi_n\}_{n\in\mathbb{N}}$ is equi-bounded in $W^{1,2}(\Omega)$. Existence of a maximizer $u_{\Omega,\delta}$ can now be inferred as in the proof of Lemma \ref{lem:basics}.
\par
Uniqueness of the solution follows from the strict concavity of the functional, while the definite sign property is a consequence of the uniqueness and the fact that
\[
\mathfrak{F}(|\varphi|)\ge \mathfrak{F}(\varphi),\qquad \mbox{ for every } \varphi\in W^{1,2}(\Omega).
\]
 Finally, the equation \eqref{torsionalproblem} is just the optimality condition for $u_{\Omega,\delta}$, which can be obtained by computing the first variation of the functional.
\vskip.2cm\noindent
We now come to the proof of \eqref{unconstrained-min}. We first notice that
\[
\begin{split}
\max_{\varphi\in W^{1,2}(\Omega)}&\left\{2\,\int_{\partial\Omega} \varphi\,d\mathcal{H}^{N-1} -\int_{\Omega}|\nabla \varphi|^2\,dx-\delta^2\,\int_{\Omega}\varphi^2\,dx\right\}\\
&=\max_{\varphi\in W^{1,2}(\Omega)\setminus W^{1,2}_0(\Omega)}\left\{2\,\int_{\partial\Omega} |\varphi|\,d\mathcal{H}^{N-1} -\int_{\Omega}|\nabla \varphi|^2\,dx-\delta^2\,\int_{\Omega}\varphi^2\,dx\right\}.
\end{split}
\]
Indeed, observe that for every $\varphi\in W^{1,2}_0(\Omega)$ we have
\[
2\,\int_{\partial\Omega} |\varphi|\,d\mathcal{H}^{N-1} -\int_{\Omega}|\nabla \varphi|^2\,dx-\delta^2\,\int_{\Omega}\varphi^2\,dx\le 0,
\]
while, if $\varphi\in W^{1,2}(\Omega)\setminus W^{1,2}_0(\Omega)$, then for every $t>0$ small enough we have
\[
2\,t\,\int_{\partial\Omega} |\varphi|\,d\mathcal{H}^{N-1} -t^2\,\left(\int_{\Omega}|\nabla \varphi|^2\,dx+\delta^2\,\int_{\Omega}\varphi^2\,dx\right)>0.
\]
This also gives in particular that
\begin{equation}
	\label{tminimization}
	\begin{split}
		&\max_{\varphi\in W^{1,2}(\Omega)\setminus W^{1,2}_0(\Omega)}\left\{2\,\int_{\partial\Omega} |\varphi|\,d\mathcal{H}^{N-1} -\int_{\Omega}|\nabla \varphi|^2\,dx-\delta^2\,\int_{\Omega}\varphi^2\,dx\right\}\\
		&=\max_{\varphi\in W^{1,2}(\Omega)\setminus W^{1,2}_0(\Omega)}\sup_{t>0}\left\{2\,t\,\int_{\partial\Omega} |\varphi|\,d\mathcal{H}^{N-1}-t^2\,\left(\int_{\Omega}|\nabla \varphi|^2\,dx-\delta^2\,\int_{\Omega}\varphi^2\,dx\right)\right\}.
	\end{split}
\end{equation}
For every $\varphi\in W^{1,2}(\Omega)\setminus W^{1,2}_0(\Omega)$, the optimal choice of $t$ is given by
\[
t=\frac{\displaystyle\int_{\partial\Omega}{\varphi}\,d\mathcal{H}^{N-1}}{\displaystyle\int_{\Omega}{|\nabla \varphi|^2\,dx}+ \delta^2\,\int_{\Omega} \varphi^2\,dx}.
\]
By substituting, we get \eqref{unconstrained-min}.
\vskip.2cm\noindent
Let us finally prove formula \eqref{torsionintegral}.
From \eqref{unconstrained-min}  it follows
\[
T(\Omega;\delta)=2\,\int_{\partial\Omega} u_{\Omega,\delta}\,d\mathcal{H}^{N-1} -\int_{\Omega}|\nabla u_{\Omega,\delta}|^2\,dx-\delta^2\,\int_{\Omega}u_{\Omega,\delta}^2\,dx.
\]
On the other hand, by taking $\varphi=u_{\Omega,\delta}$ as a test function in \eqref{torsionalproblem}, we obtain
\[
\int_{\Omega}|\nabla u_{\Omega,\delta}|^2\,dx+\delta^2\,\int_{\Omega}u_{\Omega,\delta}^2\,dx=\int_\Omega u_{\Omega,\delta}\,dx.
\]
By joining the last two equations in display, we obtain \eqref{torsionintegral}.
\end{proof}

\begin{defn}
The function $u_{\Omega,\delta}$ of Proposition \ref{prop:torsion} will be called {\it boundary $\delta-$torsion function of $\Omega$}. We also observe that such a function enjoys the following scaling law for $t>0$:
\begin{equation}
\label{scaling_u}
u_{t\,\Omega,\delta/t}(x)=t\,u_{\Omega,\delta}\left(\frac{x}{t}\right),\qquad \mbox{ for every } x\in t\,\Omega.
\end{equation}
\end{defn}
It is not difficult to see that the concave maximization problem in \eqref{unconstrained-min} admits a dual formulation, in the sense of Convex Analysis. This in turn permits to equivalently define $T(\Omega;\delta)$ as a convex minimization problem. This fact  will be useful in the sequel.
\begin{lem}[Dual formulation]\label{lm:dual_formulation}
Let $\Omega\subset\mathbb{R}^N$ be an open bounded set, with Lipschitz boundary. Let us set
\[
\mathcal{A}^{+}(\Omega)=\left\{(\phi, g)\in L^2(\Omega;\mathbb{R}^N)\times L^2(\Omega)\, :\, \begin{array}{rc}-\mathrm{div\,}\phi+\delta^2\, g\geq0, &\mbox{in }\Omega\\ \langle\phi,\nu_{\Omega}\rangle \geq 1, &\mbox{on }\partial\Omega\end{array}\right\},
\]
where the conditions have to be intended in weak sense, i.e.
\[
\int_{\partial\Omega} \varphi\,d\mathcal{H}^{N-1}\le \int_\Omega \langle \phi,\nabla \varphi\rangle\,dx+\delta^2\,\int_\Omega g\,\varphi\,dx,\qquad \mbox{ for every } \varphi\in W^{1,2}(\Omega) \mbox{ non-negative}.
\]
Then we have
	\begin{equation}\label{dual functional}
		T(\Omega;\delta)=\min_{(\phi, g)\in\mathcal{A}^+(\Omega)}\left\{\int_{\Omega}|\phi|^2\,dx+\delta^2\int_{\Omega}g^2\,dx\right\},
	\end{equation}
and the minimum is uniquely attained by the pair $(\nabla u_{\Omega,\delta},u_{\Omega,\delta})$.
\end{lem}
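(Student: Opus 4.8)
The plan is to prove \eqref{dual functional} by the standard convex-duality argument (Fenchel--Rockafellar), first establishing the easy inequality $T(\Omega;\delta)\le \int_\Omega|\phi|^2+\delta^2\int_\Omega g^2$ for every admissible pair $(\phi,g)\in\mathcal{A}^+(\Omega)$, and then showing that the pair $(\nabla u_{\Omega,\delta},u_{\Omega,\delta})$ is admissible and realizes equality. For the upper bound: take any maximizer candidate, or rather any $\varphi\in W^{1,2}(\Omega)$ in the unconstrained formulation \eqref{unconstrained-min}; using the weak admissibility inequality defining $\mathcal{A}^+(\Omega)$ with test function $|\varphi|$ (legitimate since $|\varphi|\in W^{1,2}(\Omega)$ is non-negative), together with Cauchy--Schwarz and Young, one gets
\[
2\int_{\partial\Omega}\varphi\,d\mathcal{H}^{N-1}\le 2\int_{\partial\Omega}|\varphi|\,d\mathcal{H}^{N-1}\le 2\int_\Omega\langle\phi,\nabla|\varphi|\rangle\,dx+2\delta^2\int_\Omega g\,|\varphi|\,dx,
\]
and then $2\langle\phi,\nabla|\varphi|\rangle\le |\phi|^2+|\nabla|\varphi||^2=|\phi|^2+|\nabla\varphi|^2$ and $2\delta^2 g|\varphi|\le\delta^2 g^2+\delta^2\varphi^2$. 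Rearranging yields $\mathfrak{F}(\varphi)\le\int_\Omega|\phi|^2\,dx+\delta^2\int_\Omega g^2\,dx$ for all $\varphi$, hence $T(\Omega;\delta)\le\inf_{\mathcal{A}^+(\Omega)}\{\cdots\}$ after taking the supremum over $\varphi$.

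For the reverse inequality, I would simply verify that $(\nabla u_{\Omega,\delta},u_{\Omega,\delta})\in\mathcal{A}^+(\Omega)$: the pointwise-in-the-weak-sense conditions $-\Delta u_{\Omega,\delta}+\delta^2 u_{\Omega,\delta}=0\ge 0$ and $\langle\nabla u_{\Omega,\delta},\nu_\Omega\rangle=1\ge 1$ are exactly the content of the weak formulation \eqref{torsionalproblem} (note that \eqref{torsionalproblem} holds with equality for all $\varphi\in W^{1,2}(\Omega)$, in particular the membership inequality in the definition of $\mathcal{A}^+(\Omega)$ holds, with equality, for non-negative $\varphi$). Then the value of the dual functional at this pair is $\int_\Omega|\nabla u_{\Omega,\delta}|^2\,dx+\delta^2\int_\Omega u_{\Omega,\delta}^2\,dx$, which by choosing $\varphi=u_{\Omega,\delta}$ in \eqref{torsionalproblem} equals $\int_{\partial\Omega}u_{\Omega,\delta}\,d\mathcal{H}^{N-1}=T(\Omega;\delta)$ by \eqref{torsionintegral}. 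This shows the infimum is attained and equals $T(\Omega;\delta)$, so \eqref{dual functional} holds with ``$\min$'' rather than ``$\inf$''.

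It remains to justify uniqueness of the minimizer. Here I would trace back through the two inequalities above: equality in the chain forces, for the optimal $\varphi=u_{\Omega,\delta}$, that $\nabla|\varphi|=\phi$ a.e.\ (equality in Young for the gradient term) and $\delta^2 g=\delta^2|\varphi|$ a.e., and also that $u_{\Omega,\delta}\ge 0$ so $|u_{\Omega,\delta}|=u_{\Omega,\delta}$; combined with $\mathfrak F(u_{\Omega,\delta})=T(\Omega;\delta)$ being the maximum, any optimal pair must satisfy $\phi=\nabla u_{\Omega,\delta}$ and $g=u_{\Omega,\delta}$. Alternatively, uniqueness follows from strict convexity of $(\phi,g)\mapsto\int_\Omega|\phi|^2+\delta^2\int_\Omega g^2$ on the convex set $\mathcal{A}^+(\Omega)$. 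The main subtlety — the step I expect to require the most care — is handling all the boundary terms purely in the weak sense: the ``conditions in $\mathcal{A}^+(\Omega)$'' are defined by a single integrated inequality against non-negative test functions, so one must be careful that (i) plugging $|\varphi|$ in is allowed and that the sign manipulations $\int_{\partial\Omega}\varphi\le\int_{\partial\Omega}|\varphi|$ combine correctly with it, and (ii) that no integration by parts or trace regularity beyond what \eqref{torsionalproblem} provides is secretly being used. Everything else is routine convex analysis.
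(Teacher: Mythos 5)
Your proposal is correct and follows essentially the same route as the paper: the weak admissibility inequality plus Young's inequality gives $\mathfrak{F}(\varphi)\le\int_\Omega|\phi|^2\,dx+\delta^2\int_\Omega g^2\,dx$ (the paper works directly with non-negative $\varphi$ and notes the sign constraint is harmless, while you substitute $|\varphi|$ — an equivalent device), and the reverse inequality follows by checking that $(\nabla u_{\Omega,\delta},u_{\Omega,\delta})$ is admissible and evaluates to $T(\Omega;\delta)$ via \eqref{torsionalproblem} and \eqref{torsionintegral}. The uniqueness argument via strict convexity of the dual functional on the convex set $\mathcal{A}^+(\Omega)$ is exactly the paper's.
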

\begin{proof}
For every non-negative $\varphi\in W^{1,2}(\Omega)$ and every $(\phi,g)\in\mathcal{A}^+(\Omega)$, we have by Young's inequality
\[
\begin{split}
\int_{\partial\Omega} \varphi\,d\mathcal{H}^{N-1}&\le \int_\Omega \langle \phi,\nabla \varphi\rangle\,dx+\delta^2\,\int_\Omega g\,\varphi\,dx\\
&\le \frac{1}{2}\,\int_\Omega |\phi|^2\,dx+\frac{1}{2}\,\int_\Omega |\nabla \varphi|^2\,dx+\frac{\delta^2}{2}\,\int_\Omega g^2\,dx+\frac{\delta^2}{2}\,\int_\Omega \varphi^2\,dx.
\end{split}
\]	
This in particular gives
\[
		2\,\int_{\partial\Omega}\varphi\,d\mathcal{H}^{N-1}-\left(\int_{\Omega}|\nabla \varphi|^2\,dx+\delta^2\,\int_{\Omega}\varphi^2\,dx\right)
\le \int_{\Omega}|\phi|^2\,dx+\delta^2\,\int_{\Omega}g^2\,dx.
\]
and by arbitrariness of $\varphi$ and $(\phi,g)$ we get
\[
		\max_{\varphi\in W^{1,2}(\Omega), \varphi\ge 0}\left\{2\int_{\partial\Omega}\varphi\,d\sigma-\int_{\Omega}|\nabla \varphi|^2\,dx-\delta^2\int_{\Omega}\varphi^2\,dx\right\}\leq\inf_{(\phi, g)\in\mathcal{A}^+(\Omega)}\left\{\int_{\Omega}|\phi|^2\,dx+\delta^2\,\int_{\Omega}g^2\,dx\right\}.
\]	
Since the constraint $\varphi\ge 0$ can be dropped without affecting the maximum value, recalling \eqref{unconstrained-min} we get
\[
T(\Omega;\delta)\le \inf_{(\phi, g)\in\mathcal{A}^+(\Omega)}\left\{\int_{\Omega}|\phi|^2\,dx+\delta^2\,\int_{\Omega}g^2\,dx\right\}.
\]
On the other hand, it is easy to see that the pair $(\phi_0,g_0)=(\nabla u_{\Omega,\delta}, u_{\Omega,\delta})$ is admissible and it gives
\[
\begin{split}
\inf_{(\phi, g)\in\mathcal{A}^+(\Omega)}\left\{\int_{\Omega}|\phi|^2\,dx+\delta^2\,\int_{\Omega}g^2\,dx\right\}&\le \int_\Omega |\nabla u_{\Omega,\delta}|^2\,dx+\delta^2\,\int_\Omega u_{\Omega,\delta}^2\,dx\\
&=\int_{\partial\Omega} u_{\Omega,\delta} \,d\mathcal{H}^{N-1}=T(\Omega;\delta).
\end{split}
\]
This finally proves \eqref{dual functional} and the fact that $(\nabla u_{\Omega,\delta},u_{\Omega,\delta})$ is a minimizer. Its uniqueness easily follows from the strict convexity of the variational problem in \eqref{dual functional}.
\end{proof}

\subsection{Relation with a Steklov eigenvalue problem}

Here we establish some relations between the boundary torsional rigidity and the first Steklov eigenvalue of the Schr\"odinger--type operator
\[
u\mapsto -\Delta u+\delta^2\,u.
\]
The Steklov spectrum of this operator is made of the real numbers $\sigma$ such that the following boundary value problem
\[
	\left\{\begin{array}{rcll}
		-\Delta u+\delta^2\,u&=&0,&\text{in }\Omega,\\
		\langle \nabla u,\nu_{\Omega}\rangle& =&\sigma\, u,& \text{on }\partial\Omega,
	\end{array}\right.
\]
admits at least a non-trivial weak solution $u\in W^{1,2}(\Omega)$.
It is well known that the whole spectrum is made of an increasing sequence of eigenvalues $\{\sigma_n(\Omega;\delta)\}_{n\in\mathbb{N}}$ diverging at infinity. Indeed, it is sufficient to observe that the resolvent operator
\[
\begin{array}{ccccc}
\mathcal{R}_\delta&:& L^2(\partial\Omega)& \to& L^2(\partial\Omega)\\
&& f & \mapsto & u_f,
\end{array}
\]
is positive, self-adjoint and compact. Then we can apply the Spectral Theorem in order to prove the claimed structure of the spectrum. Here, by $u_f\in W^{1,2}(\Omega)$ we mean the unique weak solution to the Neumann boundary value problem
\[
	\left\{\begin{array}{rcll}
		-\Delta u+\delta^2\,u&=&0,&\text{in }\Omega,\\
		\langle \nabla u,\nu_{\Omega}\rangle& =&f,& \text{on }\partial\Omega.
	\end{array}\right.
\]
Of course, compactness of $\mathcal{R}_\delta$ is again due to the compactness of the relevant trace embedding
\[
W^{1,2}(\Omega) \hookrightarrow L^2(\partial\Omega).
\]
The first eigenvalue has the following variational characterization
\begin{equation*}
	\sigma_1(\Omega;\delta)=\min_{u\in W^{1,2}(\Omega)\setminus W^{1,2}_0(\Omega)}\frac{\displaystyle\int_{\Omega}|\nabla u|^2\,dx+ \delta^2\,\int_{\Omega}u^2\,dx}{\displaystyle\int_{\partial\Omega} u^2\,d\mathcal{H}^{N-1}}.
\end{equation*}
Observe that by choosing $u$ to be the characteristic function of $\Omega$, we obtain
\[
\sigma_1(\Omega;\delta)\le \delta^2 \,\frac{|\Omega|}{\mathcal{H}^{N-1}(\partial\Omega)}\qquad \mbox{ and thus }\qquad \lim_{\delta\to 0^+}\sigma_1(\Omega;\delta)=0.
\]
This is consistent with the fact that the first eigenvalue of the Steklov-Laplacian is $0$, associated to constant eigenfunctions (see for example \cite[Chapter 7, Section 3]{He}).
\vskip.2cm
The relation between $\sigma_1(\Omega;\delta)$ and $T(\Omega;\delta)$ follows immediately by  applying H\"older's inequality on the boundary integral
\begin{equation*}
\left(\int_{\partial\Omega}u\,d\mathcal{H}^{N-1}\right)^2\leq
\mathcal{H}^{N-1}(\partial\Omega)\, \int_{\partial\Omega}u^2\,d\mathcal{H}^{N-1}.
\end{equation*}
This yields the following estimate, which should be compared with the classical P\`olya inequality relating torsional rigidity, volume and first eigenvalue of the Dirichlet-Laplacian, see \cite[Chapter V, Section 4]{PolyaandSzego}:

\begin{prop}\label{prop:relation-stekov}
Let $\delta>0$ and let $\Omega\subset\mathbb{R}^N$ be an open bounded set, with Lipschitz boundary. Then we have
\begin{equation}
\frac{\sigma_1(\Omega;\delta)\,T(\Omega;\delta)}{\mathcal{H}^{N-1}(\partial\Omega)}\le 1.
\end{equation}
\end{prop}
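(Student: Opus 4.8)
The plan is to combine the two variational characterizations recalled above with the Cauchy--Schwarz inequality on $\partial\Omega$, exactly as indicated by the displayed H\"older estimate preceding the statement. Fix any $\varphi\in W^{1,2}(\Omega)\setminus W^{1,2}_0(\Omega)$, so that $\int_{\partial\Omega}\varphi\,d\mathcal{H}^{N-1}$ need not vanish; note that for $\varphi\in W^{1,2}_0(\Omega)$ the numerator in \eqref{torsionfunctional} is zero, so such functions play no role in the supremum defining $T(\Omega;\delta)$, and restricting to $W^{1,2}(\Omega)\setminus W^{1,2}_0(\Omega)$ leaves that supremum unchanged. By H\"older's inequality,
\[
\left(\int_{\partial\Omega}\varphi\,d\mathcal{H}^{N-1}\right)^2\le \mathcal{H}^{N-1}(\partial\Omega)\,\int_{\partial\Omega}\varphi^2\,d\mathcal{H}^{N-1},
\]
and hence, dividing by $\int_{\Omega}|\nabla\varphi|^2\,dx+\delta^2\int_{\Omega}\varphi^2\,dx$,
\[
\frac{\left(\int_{\partial\Omega}\varphi\,d\mathcal{H}^{N-1}\right)^2}{\displaystyle\int_{\Omega}|\nabla\varphi|^2\,dx+\delta^2\int_{\Omega}\varphi^2\,dx}\le \mathcal{H}^{N-1}(\partial\Omega)\,\frac{\displaystyle\int_{\partial\Omega}\varphi^2\,d\mathcal{H}^{N-1}}{\displaystyle\int_{\Omega}|\nabla\varphi|^2\,dx+\delta^2\int_{\Omega}\varphi^2\,dx}.
\]

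Next I would take the supremum over all admissible $\varphi$ on both sides. On the left this produces $T(\Omega;\delta)$ by definition \eqref{torsionfunctional}; on the right the supremum of the Steklov--type quotient over $W^{1,2}(\Omega)\setminus W^{1,2}_0(\Omega)$ equals $1/\sigma_1(\Omega;\delta)$, by the variational characterization of the first eigenvalue of $-\Delta+\delta^2$ with Steklov boundary condition recalled above (and $\sigma_1(\Omega;\delta)>0$ since the corresponding quotient is bounded below away from zero by the trace embedding). We conclude
\[
T(\Omega;\delta)\le \frac{\mathcal{H}^{N-1}(\partial\Omega)}{\sigma_1(\Omega;\delta)},
\]
which upon multiplication by $\sigma_1(\Omega;\delta)/\mathcal{H}^{N-1}(\partial\Omega)$ is exactly the claimed inequality.

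There is essentially no obstacle: the only point requiring a word is the matching of the admissible classes of the two variational problems, handled above by the remark that $W^{1,2}_0(\Omega)$ is irrelevant for $T(\Omega;\delta)$. Alternatively, one may avoid passing to suprema entirely by using a first Steklov eigenfunction $u_1$ as a competitor in \eqref{torsionfunctional} and applying H\"older directly; this yields the same bound and makes transparent that equality would force $u_1$ to be $\mathcal{H}^{N-1}$--a.e.\ constant on $\partial\Omega$.
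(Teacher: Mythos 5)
Your proof is correct and follows essentially the same route as the paper: apply H\"older's (Cauchy--Schwarz) inequality to the boundary integral and compare the Rayleigh quotient defining $T(\Omega;\delta)$ with the one defining $\sigma_1(\Omega;\delta)$. The remark about the irrelevance of $W^{1,2}_0(\Omega)$ for the supremum is a correct and welcome clarification, but nothing more is needed.
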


\section{Some properties of the torsion function}
\label{sec:3}

In this section, we establish some few quantitative properties of the function $u_{\Omega,\delta}$ and study its asymptotic behavior as $\delta$ goes to $0$.
\begin{prop}
\label{prop:ubasic}
Let $\delta>0$ and let $\Omega\subset\mathbb{R}^N$ be an open bounded set, with Lipschitz boundary.
\begin{enumerate}
\item[{\it (i)}] The $L^1(\Omega)$ norm of $u_{\Omega,\delta}$ is given by
\begin{equation}
\label{L1}
\int_\Omega u_{\Omega,\delta}\,dx=\frac{\mathcal{H}^{N-1}(\partial\Omega)}{\delta^2};
\end{equation}
\item[{\it (ii)}] its trace is in $L^\infty(\partial\Omega)$, with the following estimate: for every exponent $2<q<2^\#$
\begin{equation}
\label{doublesided}
\|u_{\Omega,\delta}\|_{L^\infty(\partial\Omega)}\le C_q\,\left(\frac{T(\Omega;\delta)^\frac{q-2}{q}}{\min\{1,\delta^2\}\,\eta_{q}(\Omega)}\right)^\frac{q}{2\,(q-1)},
\end{equation}
where $\eta_q(\Omega)$ is the constant defined in \eqref{boundary_constant} and $C_q>0$ is a constant only depending on $q$, which blows-up as $q\searrow 2$;
\vskip.2cm
\item[{\it (iii)}] we have $u_{\Omega,\delta}\in L^\infty(\Omega)$ and it holds
\begin{equation}
\label{maximumprinciple}
\|u_{\Omega,\delta}\|_{L^\infty(\Omega)}\le \|u_{\Omega,\delta}\|_{L^\infty(\partial\Omega)}.
\end{equation}
\end{enumerate}
\end{prop}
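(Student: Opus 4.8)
The three assertions concern the torsion function $u:=u_{\Omega,\delta}$, which by Proposition \ref{prop:torsion} is the weak solution of the Neumann problem $-\Delta u+\delta^2 u=0$ in $\Omega$ with $\langle\nabla u,\nu_\Omega\rangle=1$ on $\partial\Omega$, and which is non-negative. I will treat the three points in the order given, since (iii) relies on the $L^\infty$ bound on the trace established in (ii).

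\textbf{Part (i).} This is immediate. Test the weak formulation \eqref{torsionalproblem} with the admissible function $\varphi\equiv 1$: the gradient term vanishes and one gets $\delta^2\int_\Omega u\,dx=\mathcal{H}^{N-1}(\partial\Omega)$, which is exactly \eqref{L1}. (This computation already appeared inside the proof of Proposition \ref{prop:torsion}.)

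\textbf{Part (ii).} Here I would run a Moser-type iteration on the trace, exploiting that $u\ge 0$. Fix $2<q<2^{\#}$. For $\beta\ge 1$ and a truncation level $M>0$, set $u_M=\min\{u,M\}$ and use $\varphi=u\,u_M^{2(\beta-1)}$ (or rather $u_M^{2\beta-1}$, chosen to keep $\varphi$ bounded and in $W^{1,2}$) as a test function in \eqref{torsionalproblem}. The $\delta^2$-term has a sign and can be dropped from the left-hand side after moving it around appropriately; the gradient term produces, up to the usual factor $\beta/(2\beta-1)^2$ or similar, a bound on $\int_\Omega|\nabla (u_M^{\beta})|^2\,dx$ in terms of $\int_{\partial\Omega} u^{2\beta-1}\,d\mathcal{H}^{N-1}$ (once we also control the low-order term using $\min\{1,\delta^2\}\|\cdot\|_{W^{1,2}}^2$ on the left). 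Then apply the trace inequality \eqref{boundary_constant} with exponent $q$ to $u_M^{\beta}$: $\|u_M^{\beta}\|_{L^q(\partial\Omega)}^2\le \eta_q(\Omega)^{-1}\|u_M^{\beta}\|_{W^{1,2}(\Omega)}^2$. Combining, one obtains a reverse-Hölder type inequality on the boundary,
\[
\|u_M\|_{L^{q\beta}(\partial\Omega)}^{\beta}\le \Bigl(\frac{C\,\beta}{\min\{1,\delta^2\}\,\eta_q(\Omega)}\Bigr)^{1/2}\,\|u\|_{L^{2\beta-1}(\partial\Omega)}^{(2\beta-1)/2},
\]
valid uniformly in $M$; letting $M\to\infty$ removes the truncation. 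Since $q>2$ the exponents $q\beta$ and $2\beta-1$ satisfy $q\beta/(2\beta-1)\to q/2>1$ as $\beta\to\infty$, so iterating over a geometric sequence $\beta_k$ with $2\beta_{k+1}-1=q\beta_k$ gives a bound on $\|u\|_{L^\infty(\partial\Omega)}$ by a convergent product of the constants, with the starting norm being $\|u\|_{L^1(\partial\Omega)}=T(\Omega;\delta)$ (using \eqref{torsionintegral}) at, say, $\beta_0$ chosen so that $2\beta_0-1=1$, i.e. $\beta_0=1$. Tracking exponents through the product yields precisely the power $\tfrac{q}{2(q-1)}$ on the bracket and the power $\tfrac{q-2}{q}$ on $T(\Omega;\delta)$ stated in \eqref{doublesided}; the constant $C_q$ blows up as $q\searrow 2$ because the ratio $q/2\to 1$ makes the geometric iteration degenerate. \emph{This iteration is the main obstacle}: one must be careful that only boundary integrals (not interior ones, apart from the good $W^{1,2}$ term) appear on the right, which is what makes the scheme close, and one must bookkeep the exponents to land on the exact constants claimed.

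\textbf{Part (iii).} With $\|u\|_{L^\infty(\partial\Omega)}=:K<\infty$ in hand from (ii), this is a comparison/maximum-principle argument. The function $w=(u-K)_+$ lies in $W^{1,2}(\Omega)$, is non-negative, and has \emph{zero trace} on $\partial\Omega$ (since $u\le K$ there $\mathcal{H}^{N-1}$-a.e.), hence $w\in W^{1,2}_0(\Omega)$. Test \eqref{torsionalproblem} with $\varphi=w$: the boundary term vanishes because $w$ has zero trace, so
\[
\int_\Omega \langle\nabla u,\nabla w\rangle\,dx+\delta^2\int_\Omega u\,w\,dx=0 .
\]
On the set $\{u>K\}$ we have $\nabla w=\nabla u$ and $u\,w=(u-K)w+Kw\ge 0$, so the left-hand side is $\int_{\{u>K\}}|\nabla u|^2\,dx+\delta^2\int_\Omega u\,w\,dx\ge \int_\Omega|\nabla w|^2\,dx$, forcing $\nabla w\equiv 0$ and $w\equiv 0$, i.e. $u\le K$ a.e. in $\Omega$. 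This is exactly \eqref{maximumprinciple}, and in particular $u\in L^\infty(\Omega)$.
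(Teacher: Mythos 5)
Your proposal is correct and follows essentially the same route as the paper: (i) by testing the weak formulation with a constant, (ii) by a boundary Moser iteration driven by the trace constant $\eta_q(\Omega)$ with the recursion $p_{k+1}=\tfrac{q}{2}\,(p_k+1)$ on the boundary Lebesgue exponents starting from $\|u\|_{L^1(\partial\Omega)}=T(\Omega;\delta)$, and (iii) by testing with $(u-K)_+\in W^{1,2}_0(\Omega)$ and using that both resulting terms are non-negative. The only differences are cosmetic: the paper parametrizes the test functions as $u_M^\beta$ and keeps $u_M$ on both sides of the reverse H\"older scheme (removing the truncation only at the very end), and it produces the $\min\{1,\delta^2\}$ factor by first reducing to $\delta=1$ via the scaling laws rather than absorbing the zeroth-order term directly; your exponent bookkeeping for (ii) is left as a sketch, but its skeleton matches the paper's computation exactly.
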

\begin{proof}
Point \emph{(i)} simply follows by testing \eqref{torsionalproblem} with the characteristic function of $\Omega$.
\vskip.2cm\noindent
We now prove that $u_{\Omega,\delta}$ has a bounded trace, by using a Moser's iteration.
We first show that it is sufficient to prove the claimed estimate \eqref{L1} for $\delta=1$. Indeed, let us suppose that \eqref{L1} holds true for $\delta=1$, for every open bounded set, with Lipschitz boundary.
By recalling \eqref{scaling_u}, we would get
\[
u_{\Omega,\delta}(x)=\frac{1}{\delta}\,u_{\delta\,\Omega,1}(\delta\,x),\qquad \mbox{ for }x\in\Omega,
\]
and thus
\begin{equation}
\label{reduction}
\|u_{\Omega,\delta}\|_{L^\infty(\partial\Omega)}=\frac{1}{\delta}\,\|u_{\delta\,\Omega,1}\|_{L^\infty(\partial(\delta\,\Omega))}\le \frac{C_q}{\delta}\,\left(\frac{T(\delta\,\Omega;1)^\frac{q-2}{q}}{\eta_{q}(\delta\,\Omega)}\right)^\frac{q}{2\,(q-1)}.
\end{equation}
We now observe that by its definition and a change of variable, we have
\[
\begin{split}
\eta_{q}(\delta\,\Omega)&=\inf_{\varphi\in W^{1,2}(\delta\Omega)\setminus W^{1,2}_0(\delta\Omega)}\frac{\displaystyle\int_{\delta\Omega}{|\nabla \varphi|^2}\,dx+ \int_{\delta\Omega}\varphi^2\,dx}{\displaystyle\left(\int_{\partial\Omega} |\varphi|^{q}\,d\mathcal{H}^{N-1}\right)^\frac{2}{q}}\\
&=\inf_{\varphi\in W^{1,2}(\Omega)\setminus W^{1,2}_0(\Omega)}\frac{\delta^{N-2}\,\displaystyle\int_{\Omega}{|\nabla \varphi|^2}\,dx+ \delta^N\,\int_{\Omega}\varphi^2\,dx}{\delta^{(N-1)\frac{2}{q}}\displaystyle\left(\int_{\partial\Omega} |\varphi|^{q}\,d\mathcal{H}^{N-1}\right)^\frac{2}{q}}\\
&\ge \delta^{N\,\frac{q-2}{q}-2\,\frac{q-1}{q}}\,\Big(\min\{1,\delta^2\}\Big)\,\eta_{q}(\Omega),
\end{split}
\]
while by \eqref{scaling}
\[
T(\delta\,\Omega;1)=\delta^N\,T(\Omega;\delta).
\]
Going back to \eqref{reduction}, we get the claimed upper bound in \eqref{doublesided} for $\delta\not=1$, as well.\\

In order to prove \eqref{doublesided} with $\delta=1$, for notational simplicity we will write $u$ in place of $u_{\Omega,1}$. We fix $M>0$ and $\beta\ge 1$, then we insert the test function
\[
\varphi=u_M^\beta,\qquad \mbox{ where } u_M=\min\{u,M\},
\]
in the weak formulation \eqref{torsionalproblem}. Observe that this is a feasible test function, thanks to the Chain Rule in Sobolev spaces. We then obtain
\[
\beta\,\int_{\{u\le M\}} |\nabla u|^2\,u^{\beta-1}\,dx+\,\int_\Omega u\,u_M^\beta\,dx=\int_{\partial\Omega} u_M^\beta\,d\mathcal{H}^{N-1}.
\]
By observing that
\[
\beta\,\int_{\{u\le M\}} |\nabla u|^2\,u^{\beta-1}\,dx=\frac{4\,\beta}{(\beta+1)^2}\,\int_\Omega \left|\nabla \left(u_M^\frac{\beta+1}{2}\right)\right|^2\,dx,
\]
and using that $u\ge u_M$,
from the identity above we get
\[
\frac{4\,\beta}{(\beta+1)^2}\,\int_\Omega \left|\nabla \left(u_M^\frac{\beta+1}{2}\right)\right|^2\,dx+\int_\Omega \left(u_M^\frac{\beta+1}{2}\right)^2\,dx\le \int_{\partial\Omega} u_M^\beta\,d\mathcal{H}^{N-1}.
\]
This in particular entails that
\[
\left\|u_M^\frac{\beta+1}{2}\right\|_{W^{1,2}(\Omega)}^2\le \left(\frac{(\beta+1)^2}{4\,\beta}+1\right)\,\int_{\partial\Omega} u_M^\beta\,d\mathcal{H}^{N-1}.
\]
 By recalling the notation \eqref{boundary_constant}, from the trace embedding with $2<q<2^\#$ we obtain
\begin{equation}
\label{moser0}
\eta_{q}(\Omega)\,\left(\int_{\partial\Omega} \left(u_M^\frac{\beta+1}{2}\right)^{q}\,d\mathcal{H}^{N-1}\right)^\frac{2}{q} \le \left(\frac{(\beta+1)^2}{4\,\beta}+1\right)\,\int_{\partial\Omega} u_M^\beta\,d\mathcal{H}^{N-1},
\end{equation}
which is an iterative scheme of reverse H\"older inequalities. Before going further, we observe that for every $\beta\ge 1$ we have
\[
\frac{(\beta+1)^2}{4\,\beta}+1\le \frac{\beta+1}{2}+1\le \beta+1\le 2\,\beta,
\]
and then define the sequence of exponents
\[
\beta_0=1,\qquad \beta_{i+1}=\frac{q}{2}\,(\beta_i+1),\qquad \mbox{ for } i\in\mathbb{N}.
\]
Then from \eqref{moser0} we get
\[
\left(\int_{\partial\Omega} (u_M)^{\beta_{i+1}}\,d\mathcal{H}^{N-1}\right)^\frac{1}{\beta_{i+1}} \le \left(\frac{2}{\eta_{q}(\Omega)}\,\beta_i\right)^{\frac{q}{2}\,\frac{1}{\beta_{i+1}}}\,\left(\int_{\partial\Omega} u_M^{\beta_i}\,d\mathcal{H}^{N-1}\right)^{\frac{q}{2}\,\frac{1}{\beta_{i+1}}}.
\]
If we set for notational simplicity $Y_i=\|u_M\|_{L^{\beta_i}(\partial\Omega)}$, the previous scheme can be written as
\begin{equation}
\label{moser1}
Y_{i+1}\le  \left(\frac{2}{\eta_{q}(\Omega)}\,\beta_i\right)^{\frac{q}{2}\,\frac{1}{\beta_{i+1}}}\, Y_i^{\frac{q}{2}\,\frac{\beta_i}{\beta_{i+1}}},\qquad \mbox{ for every } i\in\mathbb{N}.
\end{equation}
We start with $i=0$ and iterate \eqref{moser1} $n$ times. We then obtain
\begin{equation}
\label{moser2}
Y_{n}\le \left(\frac{2}{\eta_{q}(\Omega)}\right)^{\frac{1}{\beta_n}\,\sum\limits_{i=1}^{n} \left(\frac{q}{2}\right)^i}\,\left[\prod_{i=0}^{n-1}\beta_i^{\left(\frac{q}{2}\right)^{n-i}}\right]^\frac{1}{\beta_n}\, Y_0^{\left(\frac{q}{2}\right)^n\,\frac{\beta_0}{\beta_{n}}}.
\end{equation}
We now wish to take the limit as $n$ goes to $\infty$ in this estimate. For this, we observe that
\[
\sum\limits_{i=1}^{n} \left(\frac{q}{2}\right)^i\sim \frac{2}{q-2}\,\left(\frac{q}{2}\right)^{n+1},\qquad \mbox{ as } n\to\infty,
\]
\[
\beta_n=\left(\left(\frac{q}{2}\right)^n+\sum_{i=1}^n \left(\frac{q}{2}\right)^i\right)\sim \left(\frac{q}{2}\right)^n\,\frac{2\,(q-1)}{q-2},\qquad \mbox{ as } n\to\infty,
\]
and
\[
\begin{split}
\lim_{n\to\infty} \left[\prod_{i=0}^{n-1}\beta_i^{\left(\frac{q}{2}\right)^{n-i}}\right]^\frac{1}{\beta_n}&=\lim_{n\to\infty} \exp\left(\frac{1}{\beta_n}\,\left(\frac{q}{2}\right)^n\,\sum_{i=0}^{n-1}\left(\frac{2}{q}\right)^i\,\log\beta_i\right)=C_q<+\infty.
\end{split}
\]
Thus from \eqref{moser2}, we get
\[
Y_\infty\le C_q\,\left(\frac{2}{\eta_{q}(\Omega)}\right)^\frac{q}{2\,(q-1)}\,\Big(Y_0\Big)^\frac{q-2}{2\,(q-1)}.
\]
Finally, by recalling the definition of $Y_i$ and that of $u_M$, we get
\[
\begin{split}
\|u_M\|_{L^\infty(\partial\Omega)}&\le  C_q\,\left(\frac{2}{\eta_{q}(\Omega)}\right)^\frac{q}{2\,(q-1)}\,\Big(\int_{\partial\Omega} u_M\,d\mathcal{H}^{N-1}\Big)^\frac{q-2}{2\,(q-1)}\\
&\le C_q\,\left(\frac{2}{\eta_{q}(\Omega)}\right)^\frac{q}{2\,(q-1)}\,\Big(\int_{\partial\Omega} u\,d\mathcal{H}^{N-1}\Big)^\frac{q-2}{2\,(q-1)}=C_q\,\left(\frac{2}{\eta_{q}(\Omega)}\right)^\frac{q}{2\,(q-1)}\,\Big(T(\Omega;\delta)\Big)^\frac{q-2}{2\,(q-1)}.
\end{split}
\]
We also used the identity \eqref{torsionintegral} in the last equality.
We eventually get the desired result from the previous estimate, by arbitrariness of $M>0$. This concludes the proof of point \emph{(ii)}.
\vskip.2cm\noindent
Finally, we come to the proof of point \emph{(iii)}, i.e. the maximum principle \eqref{maximumprinciple}. Let us set for brevity $L=\|u_{\Omega,\delta}\|_{L^\infty(\partial\Omega)}$, then we have
\[
u_{\Omega,\delta}\le L,\qquad \mbox{ on } \partial\Omega.
\]
This implies that $\varphi=((u_{\Omega,\delta})-L)_+\in W^{1,2}_0(\Omega)$. By inserting this test function in \eqref{torsionalproblem}, we obtain
\[
\int_\Omega \langle \nabla u_{\Omega,\delta},\nabla ((u_{\Omega,\delta})-L)_+\rangle\,dx+\delta^2\,\int_\Omega u_{\Omega,\delta}\,((u_{\Omega,\delta})-L)_+\,dx=0,
\]
that is
\[
\int_\Omega |\nabla ((u_{\Omega,\delta})-L)_+|^2\,dx+\delta^2\,\int_\Omega u_{\Omega,\delta}\,((u_{\Omega,\delta})-L)_+\,dx=0.
\]
Since both terms are non-negative, we get that they both must vanish. This in turn implies
\[
\nabla ((u_{\Omega,\delta})-L)_+=0,\qquad \mbox{ a.\,e. in }\Omega,
\]
and
\[
u_{\Omega,\delta}\,((u_{\Omega,\delta})-L)_+=0,\qquad \mbox{ a.\,e. in }\Omega.
\]
The two informations entail that $(u_{\Omega,\delta}-L)_+$ must vanish almost everywhere in $\Omega$, which means that
\[
0\le u_{\Omega,\delta}\le L= \|u_{\Omega,\delta}\|_{L^\infty(\partial\Omega)},\qquad \mbox{ a.\,e. in }\Omega.
\]
Thus we get $\|u_{\Omega,\delta}\|_{L^\infty(\Omega)}\le \|u_{\Omega,\delta}\|_{L^\infty(\partial\Omega)}$.
\end{proof}
\begin{rem}[Regularity of $u_{\Omega,\delta}$]
\label{rem:smoothness}
We remark that by classical Elliptic Regularity, the function $u_{\Omega,\delta}$ actually belongs to $C^\infty(\Omega)$, i.e. it is a classical solution of
\[
-\Delta u+\delta^2\,u=0,\qquad \mbox{ in }\Omega.
\]
However, the regularity up to the boundary is more delicate, under our standing assumptions on $\Omega$. Indeed, well-known counterexamples show that global smoothness may fail in Lipschitz sets, already for harmonic functions and very simple boundary data, see for example \cite{Gr}. Thus in general $u_{\Omega,\delta}$ has to be considered only as a weak solution of
\[
\left\{\begin{array}{rcll}
-\Delta u+\delta^2\,u&=&0,&\text{in }\Omega,\\
\langle \nabla u,\nu_\Omega\rangle &=&1,&\text{on }\partial\Omega,
\end{array}\right.
\]
i.e. the Neumann condition has to be intended only in weak sense.
\par
Finally, we point out that the property $u_{\Omega,\delta}\ge 0$ can actually be enforced to
\[
u_{\Omega,\delta}>0,\qquad \mbox{ for every } x\in\Omega,
\]
by virtue of the minimum principle, see \cite[Chapter 6, Section 4, Theorem 4]{Ev}. Observe that this holds true also if $\Omega$ is a disconnected set, by virtue of the non-homogeneous Neumann condition. In other words, if $\Omega$ is made of $k$ connected components, we must have $u_{\Omega,\delta}>0$ on each single component.
\end{rem}
The boundary torsion functions are monotone with respect to the parameter $\delta$. This is the content of the next result:
\begin{lem}[Monotonicity]
Let $\Omega\subset\mathbb{R}^N$ be an open bounded set, with Lipschitz boundary. For every $0<\delta_0<\delta_1$, we have
\[
u_{\Omega,\delta_0}> u_{\Omega,\delta_1},\qquad \mbox{ in }\Omega.
\]
\end{lem}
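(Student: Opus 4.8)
The plan is to compare the two torsion functions through their weak formulations \eqref{torsionalproblem} together with an energy estimate, and then to upgrade the resulting non‑strict inequality to a strict one via the strong minimum principle.

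First I would write \eqref{torsionalproblem} for the parameters $\delta_0$ and $\delta_1$ and subtract the two identities. Setting $w:=u_{\Omega,\delta_1}-u_{\Omega,\delta_0}\in W^{1,2}(\Omega)$ and using the algebraic identity $\delta_1^2\,u_{\Omega,\delta_1}-\delta_0^2\,u_{\Omega,\delta_0}=\delta_1^2\,w+(\delta_1^2-\delta_0^2)\,u_{\Omega,\delta_0}$, the two boundary integrals cancel and one is left, for every $\varphi\in W^{1,2}(\Omega)$, with
\[
\int_\Omega \langle \nabla w,\nabla \varphi\rangle\,dx+\delta_1^2\,\int_\Omega w\,\varphi\,dx=-(\delta_1^2-\delta_0^2)\int_\Omega u_{\Omega,\delta_0}\,\varphi\,dx .
\]
In other words $w$ is the weak solution of $-\Delta w+\delta_1^2\,w=-(\delta_1^2-\delta_0^2)\,u_{\Omega,\delta_0}$ in $\Omega$, with homogeneous Neumann condition on $\partial\Omega$.

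Next I would insert the admissible test function $\varphi=w_+:=\max\{w,0\}\in W^{1,2}(\Omega)$ into this identity. Using $\langle \nabla w,\nabla w_+\rangle=|\nabla w_+|^2$ and $w\,w_+=w_+^2$ almost everywhere, the left–hand side becomes $\int_\Omega |\nabla w_+|^2\,dx+\delta_1^2\,\int_\Omega w_+^2\,dx\ge 0$, whereas the right–hand side is $\le 0$, since $\delta_1^2-\delta_0^2>0$, $u_{\Omega,\delta_0}\ge 0$ and $w_+\ge 0$. Hence both sides vanish, which forces $w_+\equiv 0$ and therefore $w\le 0$ almost everywhere in $\Omega$. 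Since $u_{\Omega,\delta_0},u_{\Omega,\delta_1}\in C^\infty(\Omega)$ by interior elliptic regularity (see Remark \ref{rem:smoothness}), this already gives $u_{\Omega,\delta_1}\le u_{\Omega,\delta_0}$ pointwise in $\Omega$.

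Finally, to obtain the strict inequality I would look at $v:=u_{\Omega,\delta_0}-u_{\Omega,\delta_1}=-w\ge 0$, which solves $-\Delta v+\delta_1^2\,v=(\delta_1^2-\delta_0^2)\,u_{\Omega,\delta_0}$ in $\Omega$. Because $u_{\Omega,\delta_0}>0$ in $\Omega$ (Remark \ref{rem:smoothness}) and $\delta_1^2-\delta_0^2>0$, the right–hand side is strictly positive, so $v$ is a nonnegative supersolution of the operator $-\Delta+\delta_1^2$ that cannot vanish identically on any connected component of $\Omega$, for otherwise the equation would force $(\delta_1^2-\delta_0^2)\,u_{\Omega,\delta_0}\equiv 0$ there. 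Applying the strong minimum principle for $-\Delta+\delta_1^2$ (see e.g. \cite[Chapter 6, Section 4]{Ev}) on each connected component, we conclude $v>0$ throughout $\Omega$, that is $u_{\Omega,\delta_0}>u_{\Omega,\delta_1}$ in $\Omega$. The only point requiring care is the sign bookkeeping when rearranging the subtracted equations and the use of the strict positivity of $u_{\Omega,\delta_0}$; the rest is routine, and in particular no boundary regularity is needed since $w_+\in W^{1,2}(\Omega)$ is a legitimate test function.
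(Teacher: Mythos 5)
Your proposal is correct and follows essentially the same route as the paper: subtract the two weak formulations, test with the positive part of the difference to get the non-strict ordering, then upgrade to a strict inequality via the strong minimum principle applied to $v=u_{\Omega,\delta_0}-u_{\Omega,\delta_1}$. The only (harmless) deviations are cosmetic: you move the term $(\delta_1^2-\delta_0^2)\,u_{\Omega,\delta_0}$ to the right-hand side before testing, and you exclude $v\equiv 0$ componentwise directly from the strictly positive source term, whereas the paper excludes it via the integral identity $\delta_1^2\int_\Omega u_{\Omega,\delta_1}=\delta_0^2\int_\Omega u_{\Omega,\delta_0}$; your variant is in fact slightly cleaner when $\Omega$ is disconnected.
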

\begin{proof}
Let us set for simplicity
\[
u_i:=u_{\Omega,\delta_i},\qquad i=0,1.
\]
By subtracting the equations \eqref{torsionalproblem} for $u_0$ and $u_1$, we get
\begin{equation}
\label{subtract}
\int_\Omega \langle \nabla (u_1-u_0),\nabla \varphi\rangle\,dx+\delta_1^2\,\int_\Omega u_1\,\varphi\,dx-\delta_0^2\,\int_\Omega u_0\,\varphi\,dx=0,\qquad \mbox{ for every } \varphi\in W^{1,2}(\Omega).
\end{equation}
We first observe that by choosing $\varphi$ to be the characteristic function of $\Omega$, we obtain
\[
\delta_1^2\,\int_\Omega u_1\,dx=\delta_0^2\,\int_\Omega u_0\,dx,
\]
and thus  we must have $u_0\not\equiv u_1$.
\par
We then choose the test function $\varphi=(u_1-u_0)_+$ in \eqref{subtract}, which implies
\[
\int_\Omega |\nabla (u_1-u_0)_+|^2\,dx+\int_\Omega (\delta_1^2\,u_1-\delta_0^2\,u_0)\,(u_1-u_0)_+\,dx=0.
\]
This can be rearranged into
\[
\int_\Omega |\nabla (u_1-u_0)_+|^2\,dx+\delta_1^2\,\int_\Omega (u_1-u_0)\,(u_1-u_0)_+\,dx=(\delta_0^2-\delta_1^2)\,\int_\Omega u_0\,(u_1-u_0)_+\,dx.
\]
The left-hand side is non-negative, thus by using that $\delta_1>\delta_0>0$, we get that
\[
\int_\Omega u_0\,(u_1-u_0)_+\,dx\le 0.
\]
Since $u_0>0$ by Remark \ref{rem:smoothness}, the previous estimate entails that $(u_1-u_0)_+$ must vanish almost everywhere.
This shows that
\[
u_1\le u_0,\qquad \mbox{ a.\,e. in }\Omega.
\]
By the interior smoothness of both functions, this property must actually hold everywhere.
\par
Finally, in order to obtain the strict sign, it is sufficient to observe that the difference $v=u_0-u_1$ is a non-negative smooth solution of
\[
-\Delta v+\delta_0^2\,v=(\delta_1^2-\delta_0^2)\,u_1\ge 0.
\]
By using again the minimum principle, we get that
\[
\mbox{ either }\quad v\equiv 0\quad \mbox{ or }\quad v>0.
\]
However, the first fact has been already excluded at the beginning. This concludes the proof.
\end{proof}
\begin{thm}[Asymptotics for $\delta\to 0$]
\label{thm:asymptotics}
Let $\Omega\subset\mathbb{R}^N$ be an open bounded connected set, with Lipschitz boundary. Then we have
\begin{equation}
\label{asymptotics}
\lim_{\delta\to 0^+}\|\nabla (\delta^2\,u_{\Omega,\delta})\|_{L^2(\Omega)}=0,
\end{equation}
and
\begin{equation}
\label{asymptotics2}
\lim_{\delta\to 0^+} \left\|\delta^2\,u_{\Omega,\delta}-\frac{\mathcal{H}^{N-1}(\partial\Omega)}{|\Omega|}\right\|_{L^m(\Omega)}=0,\qquad \mbox{ for every } 2\le m<\infty.
\end{equation}
Moreover, it also holds
\[
\lim_{\delta\to 0^+}\delta^2\, T(\Omega;\delta)=\frac{(\mathcal{H}^{N-1}(\partial\Omega))^2}{|\Omega|}.
\]
\end{thm}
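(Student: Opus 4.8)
The plan is to track the rescaled torsion function $w_\delta:=\delta^2\,u_{\Omega,\delta}$. By linearity it is the weak solution of $-\Delta w_\delta+\delta^2\,w_\delta=0$ in $\Omega$ with Neumann datum $\langle\nabla w_\delta,\nu_\Omega\rangle=\delta^2$ on $\partial\Omega$, and its $L^1(\Omega)$ mass is frozen: by \eqref{L1} we have $\int_\Omega w_\delta\,dx=\mathcal{H}^{N-1}(\partial\Omega)$ for every $\delta>0$, so the constant $c:=\mathcal{H}^{N-1}(\partial\Omega)/|\Omega|$ appearing in the statement is exactly the average of $w_\delta$ over $\Omega$, uniformly in $\delta$. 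The first step is an energy estimate: testing \eqref{torsionalproblem} with $u_{\Omega,\delta}$ and recalling \eqref{torsionintegral} gives $\|\nabla u_{\Omega,\delta}\|_{L^2(\Omega)}^2\le T(\Omega;\delta)$, while the upper bound in \eqref{upperbase} reads $T(\Omega;\delta)\le (\delta^2\,\eta_1(\Omega))^{-1}$ for $\delta\le 1$. Hence
\[
\|\nabla w_\delta\|_{L^2(\Omega)}^2=\delta^4\,\|\nabla u_{\Omega,\delta}\|_{L^2(\Omega)}^2\le \delta^4\,T(\Omega;\delta)\le \frac{\delta^2}{\eta_1(\Omega)}\longrightarrow 0\qquad (\delta\to 0^+),
\]
which is precisely \eqref{asymptotics}.

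Next I would invoke the Poincar\'e--Wirtinger inequality on the bounded connected Lipschitz set $\Omega$ (this is where the connectedness hypothesis enters): since $w_\delta$ has mean value $c$, we get $\|w_\delta-c\|_{L^2(\Omega)}\le C_\Omega\,\|\nabla w_\delta\|_{L^2(\Omega)}\to 0$, so $w_\delta\to c$ strongly in $W^{1,2}(\Omega)$ and \eqref{asymptotics2} holds for $m=2$. To reach every finite $m$, the plan is to combine this with a uniform $L^\infty$ bound on $w_\delta$: using the maximum principle \eqref{maximumprinciple}, the Moser-type estimate \eqref{doublesided} for a fixed admissible exponent $q>2$, and once more $T(\Omega;\delta)\le(\delta^2\,\eta_1(\Omega))^{-1}$ for $\delta\le1$, a short bookkeeping of the exponents yields $\|u_{\Omega,\delta}\|_{L^\infty(\Omega)}\le C\,\delta^{-2}$, that is $\|w_\delta\|_{L^\infty(\Omega)}\le C$ uniformly for $\delta\le 1$. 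Then for $2\le m<\infty$ I would simply interpolate between $L^2$ and $L^\infty$,
\[
\|w_\delta-c\|_{L^m(\Omega)}\le\|w_\delta-c\|_{L^2(\Omega)}^{2/m}\,\|w_\delta-c\|_{L^\infty(\Omega)}^{1-2/m}\le C\,\|w_\delta-c\|_{L^2(\Omega)}^{2/m}\longrightarrow 0,
\]
which proves \eqref{asymptotics2} in full.

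Finally, for the asymptotics of $\delta^2\,T(\Omega;\delta)$ I would use \eqref{torsionintegral} to write $\delta^2\,T(\Omega;\delta)=\int_{\partial\Omega}w_\delta\,d\mathcal{H}^{N-1}$. Since $w_\delta\to c$ strongly in $W^{1,2}(\Omega)$ and the trace operator $W^{1,2}(\Omega)\to L^1(\partial\Omega)$ is bounded, the traces converge in $L^1(\partial\Omega)$, whence
\[
\delta^2\,T(\Omega;\delta)=\int_{\partial\Omega}w_\delta\,d\mathcal{H}^{N-1}\longrightarrow c\,\mathcal{H}^{N-1}(\partial\Omega)=\frac{(\mathcal{H}^{N-1}(\partial\Omega))^2}{|\Omega|},
\]
which is the claimed limit (and it is reassuringly consistent with the lower bound already recorded in \eqref{upperbase}).

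The hard part will be the passage from $L^2$ to $L^m$ convergence for \emph{every} finite $m$: when $N\ge 3$ the Sobolev exponent of $W^{1,2}(\Omega)$ is finite, so strong $W^{1,2}$ convergence by itself is not enough, and one genuinely needs the uniform $L^\infty$ control on $w_\delta$. Establishing that control amounts to feeding the a priori decay $T(\Omega;\delta)\lesssim\delta^{-2}$ into \eqref{doublesided} with the correct exponents — the only step that is not pure bookkeeping. Everything else (the energy estimate, Poincar\'e--Wirtinger, trace continuity) is routine.
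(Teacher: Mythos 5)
Your argument is correct, and it follows the paper's proof for \eqref{asymptotics}, for the $L^\infty$ bound (same bookkeeping: \eqref{doublesided} plus \eqref{maximumprinciple} plus $T(\Omega;\delta)\le \delta^{-2}\eta_1(\Omega)^{-1}$ gives $\|\delta^2 u_{\Omega,\delta}\|_{L^\infty(\Omega)}\le C$), and for the $L^2$--$L^\infty$ interpolation. The one place where you genuinely diverge is the proof of \eqref{asymptotics2} for $m=2$: the paper extracts a subsequence via Rellich--Kondra\v{s}ov, shows the weak limit has vanishing gradient, uses connectedness to conclude it is constant, identifies the constant through \eqref{L1}, and then upgrades to convergence of the whole family by uniqueness of the limit. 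You instead observe that \eqref{L1} pins the mean value of $w_\delta=\delta^2 u_{\Omega,\delta}$ to be exactly $\mathcal{H}^{N-1}(\partial\Omega)/|\Omega|$ for \emph{every} $\delta$, and apply Poincar\'e--Wirtinger on the connected Lipschitz set. This is shorter, avoids the subsequence extraction entirely, and yields a quantitative rate $\|w_\delta-c\|_{L^2(\Omega)}\le C_\Omega\,\delta\,\eta_1(\Omega)^{-1/2}$, which the paper's qualitative compactness argument does not provide; connectedness enters in both proofs at the analogous point. For the final limit of $\delta^2\,T(\Omega;\delta)$ the paper is terse ("taking the limit in the weak formulation"), whereas your route through $\delta^2 T(\Omega;\delta)=\int_{\partial\Omega}w_\delta\,d\mathcal{H}^{N-1}$, strong $W^{1,2}$ convergence of $w_\delta$, and continuity of the trace into $L^1(\partial\Omega)$ is a clean and fully justified way to conclude.
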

\begin{proof}
From the identity
\begin{equation}
\label{hey!}
\int_\Omega |\nabla u_{\Omega,\delta}|^2\,dx+\delta^2\,\int_\Omega (u_{\Omega,\delta})^2\,dx=\int_{\partial\Omega} u_{\Omega,\delta}\,d\mathcal{H}^{N-1},
\end{equation}
we get in particular, for every $0<\delta\le 1$,
\begin{equation}
\label{gradient}
\begin{split}
\int_\Omega |\nabla (\delta^2\,u_{\Omega,\delta})|^2\,dx\le \delta^4\,\int_{\partial\Omega} u_{\Omega,\delta}\,d\mathcal{H}^{N-1}&=\delta^4\, T(\Omega;\delta)\\
&\le \frac{\delta^4}{\min\{1,\delta^2\}}\,\frac{1}{\eta_1(\Omega)}= \frac{\delta^2}{\eta_1(\Omega)}.
\end{split}
\end{equation}
Observe that we used formula \eqref{torsionintegral} and the upper bound \eqref{upperbase}. This shows \eqref{asymptotics}.
\par
Moreover, still from \eqref{hey!}, we also have
\[
\int_{\Omega} (\delta^2\,u_{\Omega,\delta})^2\,dx\le \delta^2\,\int_{\partial\Omega} u_{\Omega,\delta}\,d\mathcal{H}^{N-1}\le \frac{1}{\eta_1(\Omega)}.
\]
These show that the family of functions $\{\delta^2\,u_{\Omega,\delta}\}_{0<\delta\le 1}\subset W^{1,2}(\Omega)$ is equi-bounded. By the Rellich-Kondra\v{s}ov Theorem, for every infinitesimal descreasing sequence $\{\delta_n\}_{n\in\mathbb{N}}$ there exists a function $w\in W^{1,2}(\Omega)$ such that (up to a subsequence)
\[
\lim_{n\to\infty} \|\delta_n^2\,u_{\Omega,\delta_n}-w\|_{L^2(\Omega)}=0,
\]
and
\[
\lim_{n\to\infty} \int_\Omega \langle\nabla (\delta_n^2\,u_{\Omega,\delta_n}),\phi\rangle\,dx=\int_\Omega \langle \nabla w,\phi\rangle\,dx,\qquad \mbox{ for every } \phi\in L^2(\Omega;\mathbb{R}^N).
\]
Moreover, thanks to \eqref{gradient}, we actually get that it must result $\nabla w=0$ almost everywhere in $\Omega$. The connectedness assumption entails that $w$ is constant on $\Omega$. In order to identify the value of such a constant, we recall that from \eqref{L1} we have
\[
\int_\Omega (\delta_n^2\,u_{\Omega_n,\delta_n})\,dx=\mathcal{H}^{N-1}(\partial\Omega).
\]
By taking the limit as $n$ goes to $\infty$ in this identity, we finally get
\[
\int_\Omega w\,dx=\mathcal{H}^{N-1}(\partial\Omega).
\]
By recalling that $w$ is constant, we get
\[
w=\frac{\mathcal{H}^{N-1}(\partial\Omega)}{|\Omega|}.
\]
We now observe that the limit function $w$ is uniquely determined and thus does not depend on the particular sequence $\{\delta_n\}_{n\in\mathbb{N}}$. Thus, we can finally infer convergence in $L^2$ of the whole family
$\{\delta^2\,u_{\Omega,\delta}\}_{0<\delta\le 1}$, i.e. we showed \eqref{asymptotics2} for $m=2$.
\par
In order to get \eqref{asymptotics2} for an exponent $2<m<\infty$, it is sufficient to observe that
\[
\begin{split}
\left\|\delta^2\,u_{\Omega,\delta}-\frac{\mathcal{H}^{N-1}(\partial\Omega)}{|\Omega|}\right\|_{L^m(\Omega)}&\le \left\|\delta^2\,u_{\Omega,\delta}-\frac{\mathcal{H}^{N-1}(\partial\Omega)}{|\Omega|}\right\|_{L^\infty(\Omega)}^{1-\frac{2}{m}}\\
&\times \left\|\delta^2\,u_{\Omega,\delta}-\frac{\mathcal{H}^{N-1}(\partial\Omega)}{|\Omega|}\right\|_{L^2(\Omega)}^\frac{2}{m}.
\end{split}
\]
The last term converges to $0$ thanks to the proof above, while the $L^\infty$ norm is uniformly bounded, thanks to the fact that by Proposition \ref{prop:ubasic} we have for every $0<\delta\le 1$
\[
\begin{split}
\|\delta^2\,u_{\Omega,\delta}\|_{L^\infty(\Omega)}\le \|\delta^2\,u_{\Omega,\delta}\|_{L^\infty(\partial\Omega)}&\le C_q\,\left(\frac{(\delta^2\,T(\Omega;\delta))^\frac{q-2}{q}}{\eta_{q}(\Omega)}\right)^\frac{q}{2\,(q-1)}\\
&\le C_q\,\left(\frac{\eta_1(\Omega)^\frac{2-q}{q}}{\eta_{q}(\Omega)}\right)^\frac{q}{2\,(q-1)},
\end{split}
\]
for a fixed $2<q<2^\#$.
Observe that we also used the upper bound in \eqref{upperbase}.
\par
Finally, the last part of the statement easily follows by taking the limit in formula \eqref{torsionalproblem}.
\end{proof}
\begin{rem}
\label{rem:sharpbase}
With the previous result at hand, we can now prove that the lower bound
\[
\frac{1}{\delta^2}\,\frac{(\mathcal{H}^{N-1}(\partial\Omega))^2}{|\Omega|}\le T(\Omega;\delta),
\]
proved in \eqref{upperbase} is actually {\it sharp}, for every fixed $\delta>0$. Indeed, let us take $\Omega\subset\mathbb{R}^N$ an open bounded connected set, with Lipschitz boundary. By the scaling law \eqref{scaling}, we have
\[
T(t\,\Omega,\delta)=t^N\,T(\Omega;t\,\delta),\qquad \mbox{ for every } t>0.
\]
We thus obtain
\[
\begin{split}
\lim_{t\to 0^+}\delta^2\,T(t\,\Omega;\delta)\,\frac{|t\,\Omega|}{(\mathcal{H}^{N-1}(\partial(t\,\Omega)))^2}&=\lim_{t\to 0^+}\delta^2\,t^N\,T(\Omega;t\,\delta)\,\frac{|\Omega|\,t^N}{(\mathcal{H}^{N-1}(\partial \Omega)\,t^{N-1})^2}\\
&=\lim_{t\to 0^+}(\delta\,t)^2\,T(\Omega;t\,\delta)\,\frac{|\Omega|}{(\mathcal{H}^{N-1}(\partial \Omega))^2}=1,
\end{split}
\]
thanks to Theorem \ref{thm:asymptotics}.
\end{rem}

\section{Exact solutions in some special sets}
\label{sec:4}

In what follows, we will by indicate by $I_\alpha$ and $K_\alpha$ the {\it modified Bessel functions with index $\alpha$} of first and second kind, respectively.
We recall that these functions have the following asymptotic behavior for $z$ converging to $0$
\begin{equation}
\label{Bessel-behavior}
		I_{\alpha}(z)\sim\frac{1}{\Gamma(\alpha+1)}\left(\frac{z}{2}\right)^{\alpha},
\end{equation}
and
\begin{equation}
\label{Bessel-behavior2}
		K_{\alpha}(z)\sim
		\left\{\begin{array}{lr}
			-\log\left(\dfrac{z}{2}\right),& \text{ for }\alpha = 0,\\
			&\\
			\dfrac{\Gamma(\alpha)}{2}\,\left(\dfrac{2}{z}\right)^\alpha,& \text{ otherwise}.
		\end{array}\right.
\end{equation}
Here $\Gamma$ is the usual Gamma function. We refer to \cite{NIST} or \cite[Chapter 5]{Le} for the general properties of these functions.
\begin{lem}[Balls]
\label{lem:ball}
Let $\delta>0$ and let $B\subset\mathbb{R}^N$ be the ball of radius $1$, centered at the origin. Then $u_{B,\delta}$ is a radially symmetric increasing function.
This is explicitly given by
\begin{equation}
\label{uball}
	u_{B,\delta}(x)=\mathcal{U}_{\delta}(|x|)\qquad \mbox{ where }\quad\mathcal{U}_{\delta}(\varrho)=\frac{\varrho^{1-N/2}\,I_{N/2-1}(\delta\, \varrho)}{\delta\,I_{N/2}(\delta)}, \mbox{ for } 0\le \varrho<1.
\end{equation}
Accordingly, we get
\begin{equation}
	\label{torsiondisc}
	T(B;\delta) = \int_{\partial B}{u_{B,\delta}}\,d\mathcal{H}^{N-1}=\frac{N\,\omega_N\,I_{N/2-1}(\delta)}{\delta\,I_{N/2}(\delta)}.
\end{equation}
Finally, the function $\varrho\mapsto \mathcal{U}'_{\delta}(\varrho)/\varrho$ is monotone non-decreasing.
\end{lem}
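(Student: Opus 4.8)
The strategy is to reduce the PDE \eqref{problem-introduction} on the ball to an ODE in the radial variable, solve it explicitly in terms of modified Bessel functions, and then verify the monotonicity claims directly. First I would invoke uniqueness from Proposition \ref{prop:torsion}: since $u_{B,\delta}$ is the unique minimizer of a functional which is invariant under rotations (both $\int_{\partial B}\varphi$, $\int_B|\nabla\varphi|^2$ and $\int_B\varphi^2$ are unchanged if we replace $\varphi$ by $\varphi\circ R$ for $R\in O(N)$), the minimizer must itself be radial, say $u_{B,\delta}(x)=\mathcal{U}_\delta(|x|)$. Writing the equation $-\Delta u+\delta^2 u=0$ in polar coordinates gives the ODE
\[
-\mathcal{U}_\delta''(\varrho)-\frac{N-1}{\varrho}\,\mathcal{U}_\delta'(\varrho)+\delta^2\,\mathcal{U}_\delta(\varrho)=0,\qquad 0<\varrho<1,
\]
which, after the standard substitution $\mathcal{U}_\delta(\varrho)=\varrho^{1-N/2}\,w(\delta\varrho)$, becomes the modified Bessel equation of order $N/2-1$ for $w$. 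The solution that is bounded (indeed smooth) at the origin is $w(z)=c\,I_{N/2-1}(z)$; the second solution $K_{N/2-1}$ is discarded because of its singular behavior \eqref{Bessel-behavior2} at $z=0$. This already yields the form in \eqref{uball} up to the constant $c$.

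Next I would fix the constant using the Neumann condition $\langle\nabla u,\nu_B\rangle=1$ on $\partial B$, i.e. $\mathcal{U}_\delta'(1)=1$. To compute $\mathcal{U}_\delta'(1)$ cleanly I would use the Bessel identity $\frac{d}{dz}\big(z^{-\alpha}I_\alpha(z)\big)=z^{-\alpha}I_{\alpha+1}(z)$ with $\alpha=N/2-1$: this gives $\frac{d}{d\varrho}\big(\varrho^{1-N/2}I_{N/2-1}(\delta\varrho)\big)=\delta\,\varrho^{1-N/2}I_{N/2}(\delta\varrho)$, so $\mathcal{U}_\delta'(\varrho)=\dfrac{c\,\delta\,\varrho^{1-N/2}I_{N/2}(\delta\varrho)}{1}$ and evaluating at $\varrho=1$ forces $c\,\delta\,I_{N/2}(\delta)=1$, hence $c=\big(\delta\,I_{N/2}(\delta)\big)^{-1}$, which is exactly \eqref{uball}. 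The value \eqref{torsiondisc} then follows from \eqref{torsionintegral}: since $u_{B,\delta}\equiv\mathcal{U}_\delta(1)=\dfrac{I_{N/2-1}(\delta)}{\delta\,I_{N/2}(\delta)}$ on $\partial B$ and $\mathcal{H}^{N-1}(\partial B)=N\,\omega_N$, we get $T(B;\delta)=N\,\omega_N\,\dfrac{I_{N/2-1}(\delta)}{\delta\,I_{N/2}(\delta)}$.

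For the remaining qualitative assertions, monotonicity of $\mathcal{U}_\delta$ is immediate from the formula $\mathcal{U}_\delta'(\varrho)=\dfrac{\varrho^{1-N/2}\,I_{N/2}(\delta\varrho)}{I_{N/2}(\delta)}>0$, since modified Bessel functions $I_\alpha$ are positive and increasing on $(0,\infty)$ for $\alpha\ge 0$. For the last claim, that $\varrho\mapsto\mathcal{U}_\delta'(\varrho)/\varrho$ is non-decreasing, I would write
\[
\frac{\mathcal{U}_\delta'(\varrho)}{\varrho}=\frac{\varrho^{-N/2}\,I_{N/2}(\delta\varrho)}{I_{N/2}(\delta)}=\frac{\delta^{N/2}}{I_{N/2}(\delta)}\cdot\frac{I_{N/2}(\delta\varrho)}{(\delta\varrho)^{N/2}},
\]
so it suffices to show that $z\mapsto z^{-N/2}I_{N/2}(z)$ is non-decreasing on $(0,\infty)$. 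This follows from the series expansion $I_\alpha(z)=\sum_{k\ge0}\frac{1}{k!\,\Gamma(\alpha+k+1)}\big(\frac{z}{2}\big)^{\alpha+2k}$, which gives $z^{-\alpha}I_\alpha(z)=\sum_{k\ge0}\frac{(z/2)^{2k}}{2^\alpha\,k!\,\Gamma(\alpha+k+1)}$ — a power series in $z^2$ with non-negative coefficients, hence manifestly non-decreasing in $z>0$.

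**Main obstacle.** The conceptual steps are routine; the only real care needed is bookkeeping with the Bessel identities — in particular choosing the right normalization so that the recurrence $\frac{d}{dz}(z^{-\alpha}I_\alpha)=z^{-\alpha}I_{\alpha+1}$ is applied with the correct index, and keeping track of the $\varrho$-powers coming from the substitution $\mathcal{U}_\delta=\varrho^{1-N/2}w(\delta\varrho)$ — and justifying that the $K_{N/2-1}$-component must vanish, for which one checks that $\varrho^{1-N/2}K_{N/2-1}(\delta\varrho)$ is not in $W^{1,2}(B)$ near the origin using \eqref{Bessel-behavior2} (equivalently, it is not locally integrable against the test functions, or fails the interior smoothness from Remark \ref{rem:smoothness}). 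Everything else reduces to positivity of power series with non-negative coefficients.
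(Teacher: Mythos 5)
Your derivation of the explicit formula \eqref{uball} and of \eqref{torsiondisc} follows the paper exactly: radial symmetry by rotation invariance plus uniqueness, reduction to the modified Bessel equation, elimination of the $K_{N/2-1}$-branch via its singularity \eqref{Bessel-behavior2} (your $W^{1,2}$-integrability check near the origin is a clean way to make this rigorous), and normalization through the identity \eqref{magic}. Where you genuinely diverge is in the two monotonicity claims. The paper proves that $\mathcal{U}_\delta$ is non-decreasing \emph{before} having the explicit formula, by a rearrangement argument: it replaces $\mathcal{U}_\delta$ by the monotone competitor $\widetilde{\mathcal{U}}_\delta$ of \eqref{rearrange}, checks it does not decrease the functional, and invokes uniqueness. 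For the monotonicity of $\varrho\mapsto\mathcal{U}_\delta'(\varrho)/\varrho$ the paper then observes that $\mathcal{W}(\varrho)=\varrho^{-N/2}I_{N/2}(\delta\varrho)/I_{N/2}(\delta)$ solves the boundary torsion problem on the unit ball of $\mathbb{R}^{N+2}$ with constant Neumann datum $c_\delta>0$, and reuses the same rearrangement argument in dimension $N+2$. You instead read both facts off the explicit solution: $\mathcal{U}_\delta'(\varrho)=\varrho^{1-N/2}I_{N/2}(\delta\varrho)/I_{N/2}(\delta)>0$ by \eqref{magic} and positivity of $I_{N/2}$, and $z\mapsto z^{-N/2}I_{N/2}(z)$ is non-decreasing because its power series has non-negative coefficients. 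Both routes are correct; yours is shorter and more elementary once the formula is in hand, while the paper's variational argument is structural (it does not use the Bessel representation and would survive in settings where no closed form is available). No gaps.
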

\begin{proof}
Let $O$ be an $N\times N$ rotation matrix, we set $v(x)=u_{B,\delta}(O\,x)$. Thanks to the symmetries of $B$ and using the change of variable $y=O\,x$, we get
that $v$ is still a maximizer of \eqref{torsionalproblem}. By uniqueness, we thus obtain
\[
u_{B,\delta}(x)=u_{B,\delta}(O\,x),\qquad \mbox{ for } x\in B.
\]
By arbitrariness of the matrix $O$, this gives that $u_B$ must be radially symmetric. Thus there exists $\mathcal{U}_\delta$ such that
\[
u_{B,\delta}(x)=\mathcal{U}_\delta(|x|),
\]
and the function $\mathcal{U}_\delta$ must be the unique solution of
\begin{equation}
\label{1dmax}
\sup_{\varphi\in W^{1,2}(\mathrm{I})}\left\{2\,\varphi(1)-\int_\mathrm{I} |\varphi'(\varrho)|^2\,\varrho^{N-1}\,d\varrho-\delta^2\,\int_\mathrm{I} \varphi(\varrho)^2\,\varrho^{N-1}\,d\varrho\right\}
\end{equation}
where we set $\mathrm{I}=(0,1)$.
We now construct the new admissible function
\begin{equation}
\label{rearrange}
\widetilde{\mathcal{U}}_\delta(\varrho)=\left(\mathcal{U}_\delta(1)-\int_\varrho^1 (\mathcal{U}'_\delta(t))_+\,dt\right)_+.
\end{equation}
\begin{figure}
\includegraphics[scale=.2]{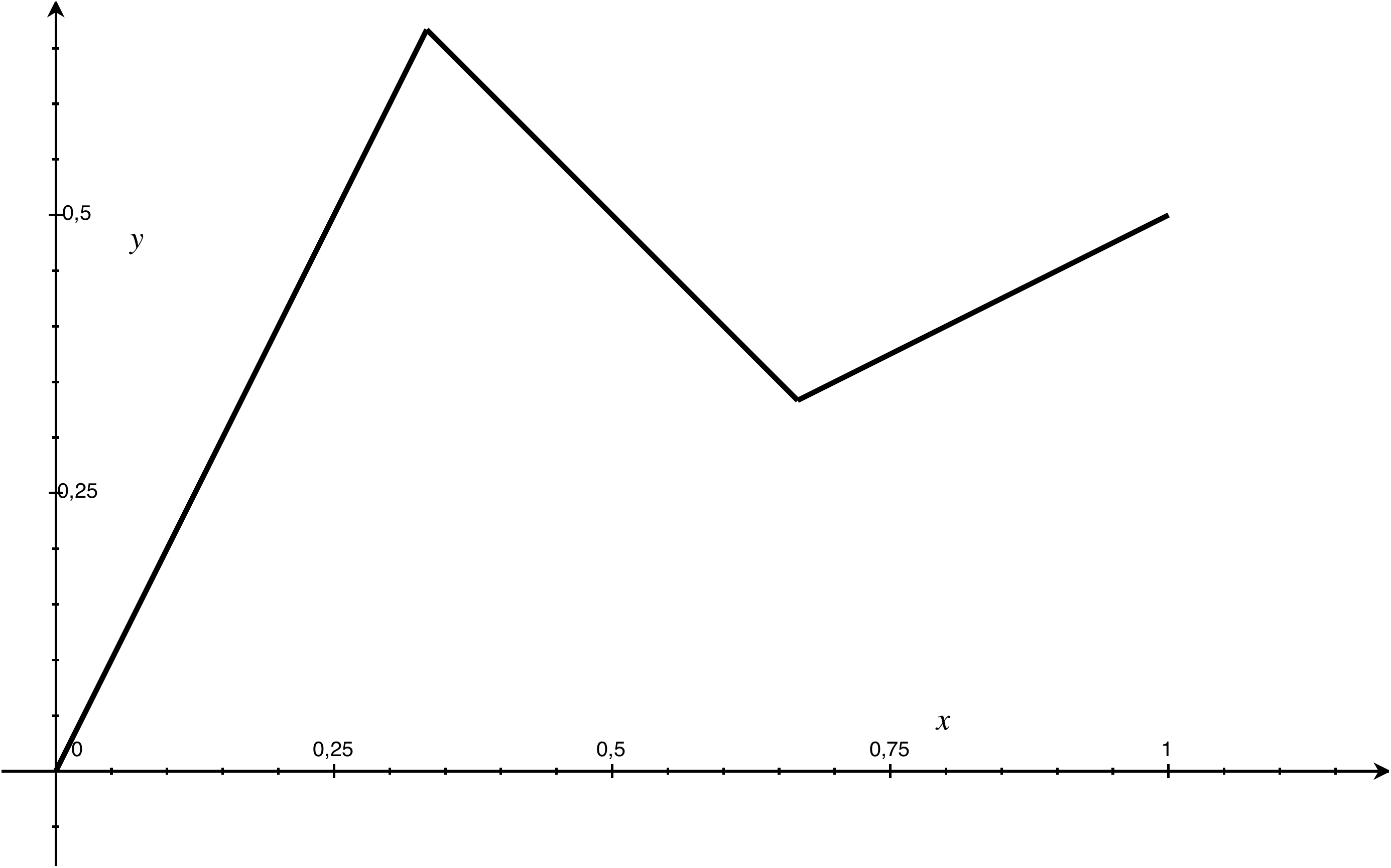}
\includegraphics[scale=.2]{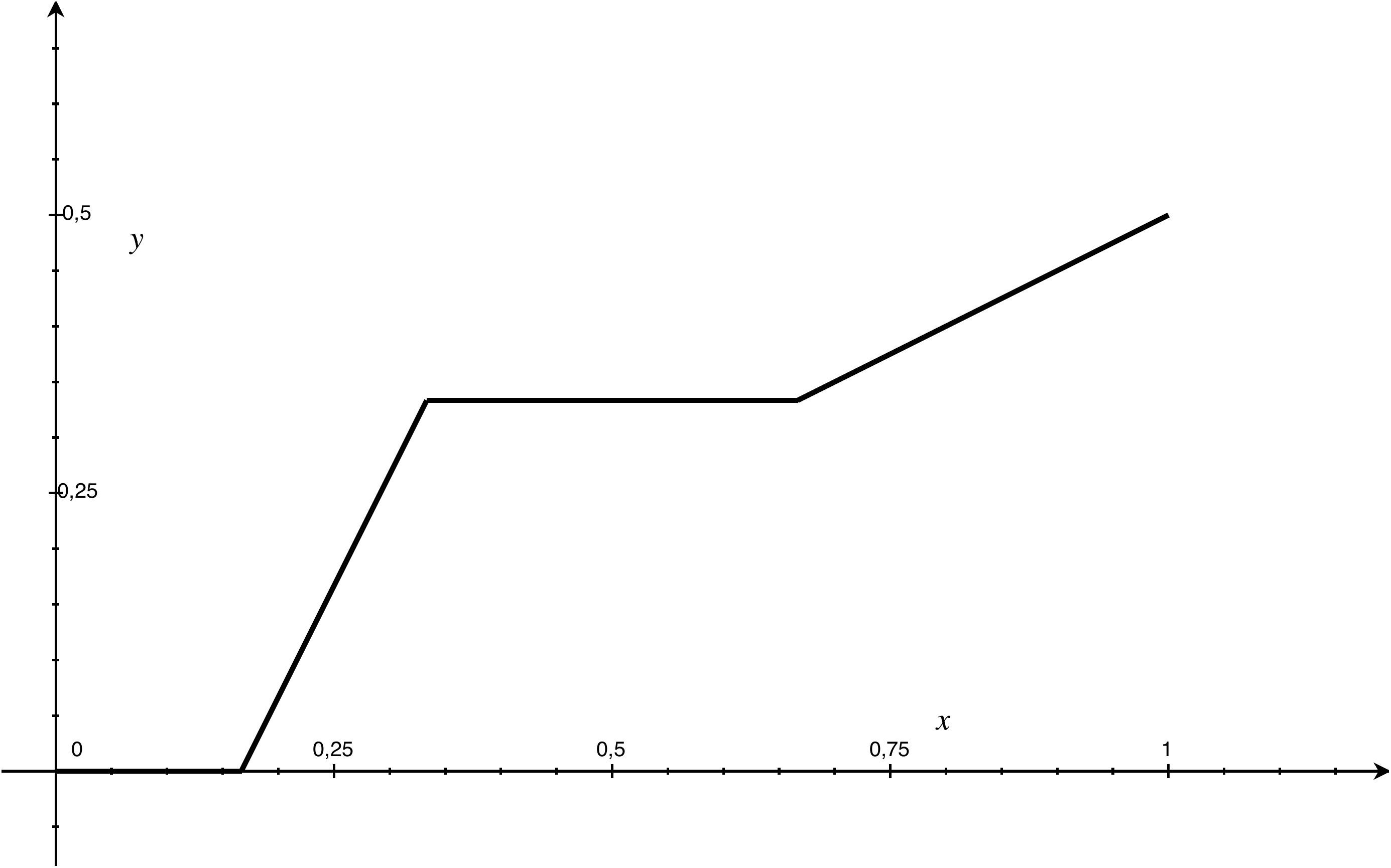}
\caption{An example of function (on the left) and its rearrangement (on the right) given by \eqref{rearrange}.}
\end{figure}
Observe that this is non-negative and non-decreasing by construction, since
\[
\frac{d}{d\varrho}\left(\mathcal{U}_\delta(1)-\int_\varrho^1 (\mathcal{U}'_\delta(t))_+\,dt\right)=(\mathcal{U}'_\delta(\varrho))_+\ge 0,
\]
and thus $\widetilde{\mathcal{U}}_\delta$ is the composition of two non-decreasing functions.
Moreover, we have $\widetilde{\mathcal{U}}_\delta(1)=\mathcal{U}_\delta(1)$ and
\[
\mathcal{U}_\delta(1)-\int_\varrho^1 (\mathcal{U}_\delta'(t))_+\,dt\le \mathcal{U}_\delta(1)-\int_\varrho^1\mathcal{U}_\delta'(t)\,dt=\mathcal{U}_\delta(\varrho).
\]
By taking the positive part on both sides and using that $\mathcal{U}_\delta\ge 0$, we obtain
\[
0\le \widetilde{\mathcal{U}}_\delta\le \mathcal{U}_\delta.
\]
Finally, by construction we also have
\[
|\widetilde{\mathcal{U}}'_\delta(\varrho)|\le |\mathcal{U}_\delta'(\varrho)|,\qquad \mbox{ for a.\,e. }\varrho\in \mathrm{I}.
\]
All these properties show that $\widetilde{\mathcal{U}}_\delta$ must be another maximizer of \eqref{1dmax}. By uniqueness, we  obtain $\widetilde{\mathcal{U}}_\delta=\mathcal{U}_\delta$ and thus, $\mathcal{U}_\delta$ has the claimed monotonicity.
\vskip.2cm\noindent
In order to explicitly determine $\mathcal{U}_\delta$, it is sufficient to write the optimality condition. This is given by the boundary value problem
\[
\left\{\begin{array}{rcll}
-\varrho^2\,\psi''-\varrho\,(N-1)\,\psi'+\delta^2\,\varrho^2\,\psi &=& 0,& \mbox{ in } \mathrm{I},\\
\psi'(0)&=&0,&\\
\psi'(1)&=&1.&
\end{array}
\right.
\]
The general solution of the ODE is given by (see for example \cite[Chapter 5, Section 7]{Le})
\begin{equation}
	\label{generalsolution}
\psi(\varrho) = C\,\varrho^{1-\frac{N}{2}}\,I_{N/2-1}(\delta\, \varrho)+D\,\varrho^{1-\frac{N}{2}}\,K_{N/2-1}(\delta\, \varrho),\qquad C,D\in\mathbb{R}.
\end{equation}
Thus, as $K_\alpha$ is singular at the origin by \eqref{Bessel-behavior2}, the condition $w'(0)=0$ imposes that $D=0$. This leads to
\[
\mathcal{U}_\delta(\varrho)= C\,\varrho^{1-\frac{N}{2}}\,I_{N/2-1}(\delta\, \varrho).
\]
We impose the other boundary condition $w'(1)=1$ in order to fix the constant $C$. By recalling the relation (see \cite[equation (5.7.9), page 110]{Le})
\begin{equation}
\label{magic}
\frac{d}{d\varrho} (\varrho^{-\alpha}\,I_{\alpha}(\varrho))=\varrho^{-\alpha}\,I_{\alpha+1}(\varrho),
\end{equation}
and using this with $\alpha=N/2-1$, we finally get the expression \eqref{uball}.
\par
We can now substitute this expression in \eqref{torsionintegral} in order to find the explicit formula \eqref{torsiondisc} for the torsional rigidity.
\vskip.2cm\noindent
At last, we prove that
\[
\mathcal{W}(\varrho):=\frac{\mathcal{U}'_{\delta}(\varrho)}{\varrho}=\frac{1}{\varrho}\,\frac{d}{d\varrho}\frac{\varrho^{1-N/2}\,I_{N/2-1}(\delta\, \varrho)}{\delta\,I_{N/2}(\delta)},
\]
is monotone non-decreasing. By using \eqref{magic}, it is easy to see that this can be re-written as
\[
\mathcal{W}(\varrho)=\frac{\varrho^{-N/2}\,I_{N/2}(\delta\, \varrho)}{I_{N/2}(\delta)}.
\]
Thus $\mathcal{W}$ solves the ODE
\[
-\varrho^2\,\mathcal{W}''-\varrho\,(N+1)\,\mathcal{W}'+\delta^2\,\varrho^2\,\mathcal{W} = 0,\qquad \mbox{ in } \mathrm{I},
\]
with boundary conditions
\[
\mathcal{W}'(0)=0\qquad \mbox{ and }\qquad \mathcal{W}'(1)=\delta\,\frac{I_{N/2+1}(\delta)}{I_{N/2}(\delta)}=:c_\delta.
\]
We notice that these can be inferred by appealing again to \eqref{magic} and \eqref{Bessel-behavior}. These properties and the first part of the proof imply that, if we denote by $B'\subset \mathbb{R}^{N+2}$ the $(N+2)-$dimensional unit ball centered at the origin, the radially symmetric function
\[
w(x):=\mathcal{W}(|x|),\qquad \mbox{ for }x\in B',
\]
is the unique solution of
\[
\left\{\begin{array}{rcll}
-\Delta u+\delta^2\,u&=&0,&\text{in }B',\\
\langle \nabla u,\nu_\Omega\rangle &=&c_\delta,&\text{on }\partial B'.
\end{array}\right.
\]
Thus, it is the unique maximizer of
\[
\sup_{\varphi\in W^{1,2}(B')}\left\{2\,c_\delta\,\int_{\partial B'} \varphi\,d\mathcal{H}^{N+1} -\int_{B'}|\nabla \varphi|^2\,dx-\delta^2\,\int_{B'}\varphi^2\,dx\right\}.
\]
By recalling that $I_\alpha$ is always positive for real arguments, we get that $c_\delta>0$ and thus the claimed monotonicity of $w$ now follows by using the same argument as for \eqref{1dmax}.
\end{proof}
\begin{rem}
By recalling the scaling laws \eqref{scaling} and \eqref{scaling_u}, for a generic ball of radius $R>0$ we have
\[
u_{B_R,\delta}(x)=R\,u_{B,\delta\,R}\left(\frac{x}{R}\right)=\frac{|x|^{1-\frac{N}{2}}\,I_{N/2-1}(\delta\, |x|)}{\delta\,R^{1-\frac{N}{2}}\,I_{N/2}(\delta\, R)},
\]
and
\[
T(B_R;\delta)=R^N\,T\left(B;\delta\,R\right)=R^{N-1}\,\frac{N\,\omega_N\,I_{N/2-1}(\delta\,R)}{\delta\,I_{N/2}(\delta\,R)}.
\]
\end{rem}
\begin{lem}[Spherical shells]
Let $\delta>0$. For $0<r<R$, we consider the spherical shell
\[
\Omega=\Big\{x\in\mathbb{R}^N\, :\, r<|x|<R\Big\}.
\]
Then $u_{\Omega,\delta}$ is a radially symmetric function.
This is explicitly given by
\[
	u_{\Omega,\delta}(x)=\mathcal{V}_{r,R,\delta}(|x|),\qquad \mbox{ for } x\in \Omega,
\]
where
\[
\mathcal{V}_{r,R,\delta}(\varrho)=C_0\,\varrho^{1-\frac{N}{2}}\,I_{N/2-1}(\delta\, \varrho)+D_0\,\varrho^{1-\frac{N}{2}}\,K_{N/2-1}(\delta\, \varrho),
\]
and the constants $C_0=C_0(r,R,\delta)\not=0$ and $D_0=D_0(r,R,\delta)\not =0$ are given by
\[
	\begin{array}{ccc}
		C_0 &=& \dfrac{r^{1-\frac{N}{2}}\,K_{N/2}(\delta\, r)+R^{1-\frac{N}{2}}\,K_{N/2}(\delta\, R)}{\delta\, r^{1-\frac{N}{2}}\,R^{1-\frac{N}{2}}\Big(I_{N/2}(R)\,K_{N/2}(r)-I_{N/2}(r)\,K_{N/2}(R)\Big)},\\
		&&\\
		D_0 &=& \dfrac{r^{1-\frac{N}{2}}\,I_{N/2}(\delta\, r)+R^{1-\frac{N}{2}}\,I_{N/2}(\delta\, R)}{\delta\, r^{1-\frac{N}{2}}\,R^{1-\frac{N}{2}}\Big(I_{N/2}(r)\,K_{N/2}(R)-I_{N/2}(R)\,K_{N/2}(r)\Big)}.
	\end{array}
\]
Accordingly, we get
\[
\begin{split}
		T(\Omega;\delta)&=\frac{\left[r^{1-\frac{N}{2}}\,K_{N/2}(\delta\, r)+R^{1-\frac{N}{2}}\,K_{N/2}(\delta\, R)\right]\,\left[R^{1-\frac{N}{2}}I_{1-N/2}(\delta\, R)+r^{1-\frac{N}{2}}\,I_{1-N/2}(\delta\, r)\right]}{\delta\, r^{1-\frac{N}{2}}\,R^{1-\frac{N}{2}}\,\left[I_{N/2}(R)\,K_{N/2}(r)-I_{N/2}(r)\,K_{N/2}(R)\right]}\\
		&+\frac{\left[r^{1-\frac{N}{2}}\,I_{N/2}(\delta\, r)+R^{1-\frac{N}{2}}\,I_{N/2}(\delta\, R)\right]\,\left[R^{1-\frac{N}{2}}\,K_{1-N/2}(\delta \,R)+r^{1-\frac{N}{2}}\,K_{1-N/2}(\delta\, r)\right]}{\delta\, r^{1-\frac{N}{2}}\,R^{1-\frac{N}{2}}\,\left[I_{N/2}(r)\,K_{N/2}(R)-I_{N/2}(R)\,K_{N/2}(r)\right]}.
\end{split}
\]
\end{lem}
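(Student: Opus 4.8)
The plan is to mimic the proof of Lemma~\ref{lem:ball}, keeping in mind the two new features of a spherical shell: its boundary has two components, and the origin is \emph{excluded} from $\Omega$, so the second Bessel solution $K_{N/2-1}$ cannot be discarded anymore. First, exactly as for the ball, for every $N\times N$ rotation matrix $O$ the function $x\mapsto u_{\Omega,\delta}(O\,x)$ is again a maximizer of \eqref{unconstrained-min}, because $\Omega$, $\partial\Omega$ and all the integrals appearing there are invariant under $O$; by the uniqueness statement in Proposition~\ref{prop:torsion} it must coincide with $u_{\Omega,\delta}$, and letting $O$ vary gives $u_{\Omega,\delta}(x)=\mathcal{V}_{r,R,\delta}(|x|)$ for some one-variable function $\mathcal{V}=\mathcal{V}_{r,R,\delta}$ on $[r,R]$. (No rearrangement argument is needed here, since no monotonicity of $\mathcal{V}$ is claimed --- in fact $\mathcal{V}$ turns out to be ``$U$--shaped'', attaining its maximum at the two endpoints.) Writing the equation \eqref{torsionalproblem} in polar coordinates, $\mathcal{V}$ solves the same ODE as in the proof of Lemma~\ref{lem:ball}, namely
\[
-\varrho^2\,\mathcal{V}''-(N-1)\,\varrho\,\mathcal{V}'+\delta^2\,\varrho^2\,\mathcal{V}=0\qquad\text{in }(r,R),
\]
now supplemented by \emph{two} Neumann conditions. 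The only delicate point is their sign: on the outer sphere $\{|x|=R\}$ the outward normal of $\Omega$ is $x/|x|$, which gives $\mathcal{V}'(R)=1$, whereas on the inner sphere $\{|x|=r\}$ the outward normal of $\Omega$ points towards the origin, i.e. it is $-x/|x|$, which gives $\mathcal{V}'(r)=-1$.

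By \eqref{generalsolution} the general solution of the ODE is $\mathcal{V}(\varrho)=C_0\,\varrho^{1-\frac{N}{2}}\,I_{N/2-1}(\delta\,\varrho)+D_0\,\varrho^{1-\frac{N}{2}}\,K_{N/2-1}(\delta\,\varrho)$, and this time \emph{both} terms are retained, since $0\notin\overline{\Omega}$ makes the singularity of $K_{N/2-1}$ at the origin described in \eqref{Bessel-behavior2} harmless. Differentiating with the help of \eqref{magic} (used with argument $\delta\,\varrho$) and of its companion identity $\frac{d}{d\varrho}\bigl(\varrho^{1-\frac{N}{2}}K_{N/2-1}(\delta\varrho)\bigr)=-\delta\,\varrho^{1-\frac{N}{2}}K_{N/2}(\delta\varrho)$, we get
\[
\mathcal{V}'(\varrho)=\delta\,\varrho^{1-\frac{N}{2}}\bigl(C_0\,I_{N/2}(\delta\varrho)-D_0\,K_{N/2}(\delta\varrho)\bigr).
\]
Imposing $\mathcal{V}'(r)=-1$ and $\mathcal{V}'(R)=1$ produces a $2\times2$ linear system for $(C_0,D_0)$, which I would solve by Cramer's rule. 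Its determinant is a constant multiple of $I_{N/2}(\delta R)\,K_{N/2}(\delta r)-I_{N/2}(\delta r)\,K_{N/2}(\delta R)$, which is nonzero because $\varrho\mapsto I_{N/2}(\delta\varrho)/K_{N/2}(\delta\varrho)$ is strictly increasing; the two numerators, being sums of strictly positive Bessel values, are nonzero as well, whence $C_0\neq0$ and $D_0\neq0$. Collecting the powers of $r$ and $R$ then yields the closed-form expressions for $C_0$ and $D_0$ stated above.

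Finally, $\partial\Omega$ is the disjoint union of the spheres $\{|x|=r\}$ and $\{|x|=R\}$, of $\mathcal{H}^{N-1}$--measure $N\,\omega_N\,r^{N-1}$ and $N\,\omega_N\,R^{N-1}$, on which $u_{\Omega,\delta}$ equals the constants $\mathcal{V}(r)$ and $\mathcal{V}(R)$ respectively; hence by \eqref{torsionintegral}
\[
T(\Omega;\delta)=N\,\omega_N\bigl(r^{N-1}\,\mathcal{V}(r)+R^{N-1}\,\mathcal{V}(R)\bigr).
\]
Into this one substitutes the values of $\mathcal{V}(r)$, $\mathcal{V}(R)$ together with $C_0$, $D_0$ found above, and then simplifies by means of the Bessel recurrence relations (and, where it helps, the Wronskian identity $I_\nu K_\nu'-I_\nu'K_\nu=-1/z$) to reach the displayed formula; as a consistency check, the result should be compatible with the identity $\int_\Omega u_{\Omega,\delta}\,dx=\mathcal{H}^{N-1}(\partial\Omega)/\delta^2$ of \eqref{L1}. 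I expect the only real obstacle to be of a bookkeeping nature --- carefully tracking the prefactors $\varrho^{1-N/2}$, the index shift $N/2-1\mapsto N/2$ produced by differentiation, and the signs in Cramer's rule --- so that the final expression for $T(\Omega;\delta)$ comes out in exactly the stated form.
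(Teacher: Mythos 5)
Your proposal is correct and follows exactly the route of the paper's proof: radial symmetry by rotation invariance and uniqueness, reduction to the ODE with Neumann data $\mathcal{V}'(r)=-1$, $\mathcal{V}'(R)=1$, retention of both Bessel solutions from \eqref{generalsolution}, and evaluation of $T(\Omega;\delta)$ via \eqref{torsionintegral}. The paper merely "leaves the details to the reader," and your differentiation identities, Cramer's-rule solution, and nonvanishing-determinant argument supply those details correctly.
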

\begin{proof}
The radial symmetry of $u_{\Omega,\delta}$ can be obtained with the same proof of Lemma \ref{lem:ball}. In order to determine $u_{\Omega,\delta}$, we proceed as before, by seeking this time the solution of
\[
\left\{\begin{array}{rccl}
-\varrho^2\,w''-\varrho\,(N-1)\,w'+\delta^2\,\varrho^2\,w &=& 0,& \mbox{ in } (r,R),\\
w'(r)&=&-1,&\\
w'(R)&=&1.&
\end{array}
\right.
\]
By recalling the expression of the general solution \eqref{generalsolution}, we can calculate the explicit form of both constants $C$ and $D$ by imposing the two boundary conditions. Once $u_{\Omega,\delta}$ is obtained, it is sufficient to use again \eqref{torsionintegral} to get $T(\Omega;\delta)$, as well. We leave the details to the reader.
\end{proof}

\begin{lem}[Hyperrectangle]
\label{lem:hyper}
Let $\delta>0$ and let $\ell_1,\ell_2,\dots,\ell_N>0$. If we set
\[
\Omega=\prod_{i=1}^N(-\ell_i,\ell_i),
\]
then we have
\begin{equation}
\label{hyper}
u_{\Omega,\delta}(x)=\sum_{i=1}^N \frac{\cosh(\delta\,x_i)}{\delta\,\sinh(\delta\,\ell_i)},\qquad \mbox{ for every } x=(x_1,\dots,x_N)\in\Omega.
\end{equation}
Its boundary torsional rigidity is given by
\[
T(\Omega;\delta)
=\sum_{k=1}^N \left[\frac{1}{\delta\,\tanh(\delta\,\ell_k)}\,\mathcal{H}^{N-1}(\Sigma_k)+\sum_{i\not=k} \frac{1}{\delta^2}\,\mathcal{H}^{N-2}(\Sigma_{k,i})\right],
\]
where
\[
\Sigma_k=\Big\{x\in\overline\Omega\, :\, |x_k|=\ell_k\Big\}\qquad \mbox{ and }\qquad \Sigma_{k,i}=\Big\{x\in\overline\Omega\, :\, |x_k|=\ell_k,\, |x_i|=\ell_i\Big\}.
\]
\end{lem}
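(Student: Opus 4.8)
The plan is to write down the explicit candidate in \eqref{hyper}, check by hand that it solves the weak Neumann problem \eqref{torsionalproblem}, and then appeal to the uniqueness in Proposition \ref{prop:torsion}; the value of $T(\Omega;\delta)$ will then drop out of \eqref{torsionintegral} after an elementary integration over the faces of the box.

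The candidate is suggested by separation of variables. Searching for a solution of the additive form $u(x)=\sum_{i=1}^N\phi_i(x_i)$, one computes $-\Delta u+\delta^2 u=\sum_{i=1}^N\big(-\phi_i''+\delta^2\phi_i\big)$, which vanishes in $\Omega$ provided each $\phi_i$ solves the one-dimensional equation $-\phi_i''+\delta^2\phi_i=0$. Since $\Omega$ is invariant under every reflection $x_i\mapsto-x_i$, the uniqueness statement in Proposition \ref{prop:torsion} forces $u_{\Omega,\delta}$ to be even in each variable, so we take $\phi_i(t)=A_i\cosh(\delta t)$. On the face $\{x_k=\ell_k\}$ the outer normal is $e_k$, so the boundary condition reads $\partial_{x_k}u=\phi_k'(\ell_k)=A_k\,\delta\,\sinh(\delta\ell_k)=1$, giving $A_k=\big(\delta\,\sinh(\delta\ell_k)\big)^{-1}$; the opposite face produces the same constant by evenness. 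This is precisely \eqref{hyper}, and the resulting $u$ is smooth on $\overline\Omega$ and strictly positive, in agreement with Remark \ref{rem:smoothness}.

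To make this rigorous, I would verify \eqref{torsionalproblem} directly. As $\Omega$ is a bounded Lipschitz domain and $u\in C^\infty(\overline\Omega)$, the divergence theorem gives
\[
\int_\Omega\langle\nabla u,\nabla\varphi\rangle\,dx=-\int_\Omega(\Delta u)\,\varphi\,dx+\int_{\partial\Omega}\langle\nabla u,\nu_\Omega\rangle\,\varphi\,d\mathcal{H}^{N-1},\qquad\varphi\in W^{1,2}(\Omega).
\]
By construction $-\Delta u=-\delta^2 u$ in $\Omega$, and $\langle\nabla u,\nu_\Omega\rangle=1$ holds $\mathcal{H}^{N-1}$-almost everywhere on $\partial\Omega$: it equals $1$ on the relative interior of each of the $2N$ faces by the computation above, and the union of the edges (where the normal is undefined) is $\mathcal{H}^{N-1}$-negligible. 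Substituting yields \eqref{torsionalproblem}, so $u=u_{\Omega,\delta}$ by Proposition \ref{prop:torsion}.

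Finally, by \eqref{torsionintegral} it remains to integrate $u_{\Omega,\delta}$ over $\partial\Omega=\bigcup_{k=1}^N\Sigma_k$. On each face contained in $\{x_k=\pm\ell_k\}$ the summand $\phi_k(\pm\ell_k)$ is constant and contributes $\cosh(\delta\ell_k)/(\delta\sinh(\delta\ell_k))$ times the $(N-1)$-dimensional measure of that face, while for $i\neq k$ one uses $\int_{-\ell_i}^{\ell_i}\cosh(\delta t)\,dt=2\,\delta^{-1}\sinh(\delta\ell_i)$, so the factor $(\delta\sinh(\delta\ell_i))^{-1}$ cancels and leaves a $\delta^{-2}$ multiplied by the product of the remaining side-lengths. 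Adding the contributions of $\{x_k=\ell_k\}$ and $\{x_k=-\ell_k\}$ and recognizing $2\prod_{j\neq k}(2\ell_j)=\mathcal{H}^{N-1}(\Sigma_k)$ and $4\prod_{j\neq k,\,i}(2\ell_j)=\mathcal{H}^{N-2}(\Sigma_{k,i})$ gives the asserted formula for $T(\Omega;\delta)$. The only genuinely delicate point is the very first one --- justifying that the additive ansatz really does capture \emph{the} torsion function --- but this is exactly what the reflection symmetry of the box together with the uniqueness in Proposition \ref{prop:torsion} provide; the remaining computations are routine.
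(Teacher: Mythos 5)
Your proof is correct and follows essentially the same route as the paper: directly verify that the explicit additive candidate satisfies the weak formulation \eqref{torsionalproblem} (the paper leaves this as "one can directly verify"), invoke uniqueness from Proposition \ref{prop:torsion}, and then compute $T(\Omega;\delta)$ by integrating over the faces via \eqref{torsionintegral}, with the same face-by-face bookkeeping. Your closing worry about "justifying the additive ansatz" is actually a non-issue: once the candidate is verified to solve \eqref{torsionalproblem}, uniqueness identifies it with $u_{\Omega,\delta}$ regardless of how the ansatz was found.
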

\begin{proof}
One can directly verify that \eqref{hyper} is a classical solution of the optimality condition \eqref{torsionalproblem}. In order to compute the boundary torsional rigidity, we use \eqref{hyper} in formula \eqref{torsionintegral}. We get
\[
\begin{split}
T(\Omega;\delta)&=\int_{\partial\Omega} u_{\Omega,\delta}\,d\mathcal{H}^{N-1}=\sum_{k=1}^N \int_{\Sigma_k} \left[\frac{\cosh(\delta\,x_k)}{\delta\,\sinh(\delta\,\ell_k)}+\sum_{i\not=k}\frac{\cosh(\delta\,x_i)}{\delta\,\sinh(\delta\,\ell_i)}\right]\,d\mathcal{H}^{N-1}.
\end{split}
\]
We then observe that for every $k\in\{1,\dots,N\}$, we have (observe that $\cosh(\delta\,x_k)$ is constant on $\Sigma_k$)
\[
\int_{\Sigma_k} \frac{\cosh(\delta\,x_k)}{\delta\,\sinh(\delta\,\ell_k)}\,d\mathcal{H}^{N-1}=\frac{\cosh(\delta\,\ell_k)}{\delta\,\sinh(\delta\,\ell_k)}\,\mathcal{H}^{N-1}(\Sigma_k),
\]
and for $i\not=k$ we get
\[
\int_{\Sigma_k} \frac{\cosh(\delta\,x_i)}{\delta\,\sinh(\delta\,\ell_i)}\,d\mathcal{H}^{N-1}=\frac{4}{\delta^2}\,\prod_{j\not=i,k} (2\,\ell_j)=\frac{\mathcal{H}^{N-2}(\Sigma_{k,i})}{\delta^2}.
\]
This concludes the proof.
\end{proof}

\section{Geometric estimates}
\label{sec:5}

\subsection{Planar simply connected sets}

We start by proving a lower bound for the boundary torsional rigidity by using {\it conformal transplantation}, when $N=2$. As we have mentioned in the introduction, this technique has been successfully employed in order to give geometric estimates for eigenvalues of planar sets, see for example \cite{Po55, PolyaandSzego, Sze,Wei}.
\par
Roughly speaking, it consists in producing trial functions for a variational problem in an open simply connected set $\Omega\subset\mathbb{R}^2$, by simply composing functions defined on the unit disk $\mathbb{D}=\{x\in\mathbb{R}^2\, :\,|x|<1\}$ with an holomorphic map $h:\Omega\to \mathbb{D}$.
This is particularly useful when tackling problems involving the Dirichlet integral, due to its conformal invariance.\\

In order to state our lower bound, let us recall a few facts on conformal mappings.
The celebrated {\it Riemann Mapping Theorem} (see \cite[Chapter 6]{ahlfors}) states that given any simply connected region $\Omega\subsetneq \mathbb R^2$ and a point $x_0\in\Omega$, there exists a unique (up to a rotation) holomorphic isomorphism
\[
f_{x_0}:\mathbb{D}\to\Omega,\qquad \mbox{ with }\ f_{x_0}(0)=x_0.
\]
Furthermore, when $\partial\Omega$ is $C^{1,\alpha}$, we know that this mapping can be extended up to the boundary, it is $C^1$ in $\overline{\mathbb{D}}$ and we have
\[
f'_{x_0}(x)\not =0,\qquad \mbox{ for every } x\in\partial\mathbb{D}.
\]
We refer to \cite[Theorem 1 \& Theorem 2]{Wa} for this result.
\begin{defn}\label{def:boundary-distortion}
Let $\Omega\subsetneq\mathbb{R}^2$ be an open bounded simply connected set, with $C^{1,\alpha}$ boundary, for some $0<\alpha\le 1$. With the previous notation,
we define the {\it boundary distortion radius of $\Omega$} by
\[
\dot{\mathcal{R}}_\Omega:=\inf_{x_0\in\Omega}\left(\frac{1}{2\,\pi}\,\int_{\partial \mathbb{D}} |f'_{x_0}|^2\,d\mathcal{H}^1\right)^\frac{1}{2}.
\]
It is readily seen from its definition that this quantity scales like a length.
\end{defn}
\begin{rem}
\label{rem:mono}
By using that $|f'_{x_0}|^2$ is a subharmonic function, we get that the map
\[
\varrho\mapsto \frac{1}{2\,\pi\,\varrho}\,\int_{\{|x|=\varrho\}} |f'_{x_0}|^2\,d\mathcal{H}^1,
\]
is monotone non-decreasing, a result originally due to G. H. Hardy, see \cite[Theorem III]{Ha}. In particular, we have
\[
|f'_{x_0}(0)|\le \left(\frac{1}{2\,\pi\,\varrho}\,\int_{\{|x|=\varrho\}} |f'_{x_0}|^2\,d\mathcal{H}^1\right)^\frac{1}{2}\le \left(\frac{1}{2\,\pi}\,\int_{\partial \mathbb{D}} |f'_{x_0}|^2\,d\mathcal{H}^1\right)^\frac{1}{2}.
\]
Then the definition of boundary distortion radius is maybe better appreciated by recalling that
\[
\dot{r}_\Omega:=\sup_{x_0\in\Omega} |f'_{x_0}(0)|,
\]
is usually called {\it conformal radius of $\Omega$}. This quantity naturally appears in many geometric estimates for the spectrum of the Laplacian of a planar simply connected set, see for example \cite{Po55} and \cite{PolyaandSzego}.
\par
Finally, we may notice that the quantity
\[
\left(\frac{1}{2\,\pi}\,\int_{\partial \mathbb{D}} |f'_{x_0}|^2\,d\mathcal{H}^1\right)^\frac{1}{2},
\]
can be
recognized as the norm of the conformal map $f'_{x_0}$ in the Hardy space $\mathrm{H}^2(\mathbb{D})$. We refer the reader to \cite{Koo} for an introduction to these spaces.
\end{rem}
\begin{lem}
\label{lem:simple}
Let $\Omega\subset\mathbb{R}^2$ be a bounded simply connected open set, with $C^{1,\alpha}$ boundary, for some $0<\alpha\le 1$. Then we have
\begin{equation}
\label{area}
|\Omega|\le \pi\,\dot{\mathcal{R}}_\Omega^2.
\end{equation}
In particular, if $\Omega$ is a disk of radius $R$, we have $\dot{\mathcal{R}}_\Omega=R$ and the equality holds in \eqref{area}.
\end{lem}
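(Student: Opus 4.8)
The plan is to represent the area of $\Omega$ as a weighted superposition of the circular means of $|f'_{x_0}|^2$ over $\mathbb{D}$, and then exploit the monotonicity of these means already recorded in Remark \ref{rem:mono}.

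First I would fix a point $x_0\in\Omega$ and take the conformal isomorphism $f_{x_0}\colon\mathbb{D}\to\Omega$ with $f_{x_0}(0)=x_0$ given by the Riemann Mapping Theorem. Since $\partial\Omega$ is $C^{1,\alpha}$, this map extends to a $C^1$ map on $\overline{\mathbb{D}}$ with $f'_{x_0}\neq 0$ on $\partial\mathbb{D}$, so the boundary integral below makes sense. Because the Jacobian of a holomorphic map equals $|f'_{x_0}|^2$, the area formula gives $|\Omega|=\int_{\mathbb{D}}|f'_{x_0}|^2\,dx$. Passing to polar coordinates (equivalently, using Fubini) this becomes
\[
|\Omega|=\int_0^1\left(\int_{\{|x|=\varrho\}}|f'_{x_0}|^2\,d\mathcal{H}^1\right)d\varrho=\int_0^1 2\,\pi\,\varrho\left(\frac{1}{2\,\pi\,\varrho}\int_{\{|x|=\varrho\}}|f'_{x_0}|^2\,d\mathcal{H}^1\right)d\varrho.
\]

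The key step is then to invoke Remark \ref{rem:mono}: since $|f'_{x_0}|^2$ is subharmonic, the map $\varrho\mapsto \frac{1}{2\,\pi\,\varrho}\int_{\{|x|=\varrho\}}|f'_{x_0}|^2\,d\mathcal{H}^1$ is non-decreasing, hence for $0<\varrho\le 1$ it is bounded above by its value at $\varrho=1$. Inserting this bound in the identity above and using $\int_0^1 2\,\pi\,\varrho\,d\varrho=\pi$, we get
\[
|\Omega|\le \pi\left(\frac{1}{2\,\pi}\int_{\partial\mathbb{D}}|f'_{x_0}|^2\,d\mathcal{H}^1\right),
\]
and taking the infimum over $x_0\in\Omega$ yields $|\Omega|\le\pi\,\dot{\mathcal{R}}_\Omega^2$, i.e.\ \eqref{area}.

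For the equality statement, if $\Omega=B_R(c)$ is a disk of radius $R$ centered at $c$, then the centered map $f_c(z)=c+R\,z$ satisfies $|f'_c|\equiv R$, so $\big(\frac{1}{2\,\pi}\int_{\partial\mathbb{D}}|f'_c|^2\,d\mathcal{H}^1\big)^{1/2}=R$ and therefore $\dot{\mathcal{R}}_\Omega\le R$; on the other hand \eqref{area} gives $\pi\,R^2=|B_R|\le\pi\,\dot{\mathcal{R}}_\Omega^2$, i.e.\ $\dot{\mathcal{R}}_\Omega\ge R$. Hence $\dot{\mathcal{R}}_\Omega=R$ and equality holds in \eqref{area}. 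I do not expect a real obstacle here: the only non-elementary ingredient is the monotonicity of circular means, which is already available from Remark \ref{rem:mono}, the remaining steps being the area formula for conformal maps and a one-dimensional integration.
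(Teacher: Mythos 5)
Your argument is correct and follows essentially the same route as the paper: the area formula $|\Omega|=\int_{\mathbb{D}}|f'_{x_0}|^2\,dw$, the polar decomposition into circular means, the monotonicity of those means from Remark \ref{rem:mono} to bound each by its boundary value, and the centered affine map for the disk case. No gaps to report.
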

\begin{proof}
Let $x_0\in \Omega$, with the notation above we recall that $|f'_{x_0}|^2$ coincides with the Jacobian determinant of $f_{x_0}$, seen as a two-dimensional change of variables. Thus we have
\[
|\Omega|=\int_{\mathbb{D}} |f'_{x_0}|^2\,dw.
\]
We can then write
\[
\int_{\mathbb{D}} |f'_{x_0}|^2\,dw=\int_0^1 \left(\int_{\{|x|=\varrho\}} |f_{x_0}'|^2\,d\mathcal{H}^1\right)\,d\varrho.
\]
By using Remark \ref{rem:mono} and the arbitrariness of $x_0\in\Omega$, we then get the desired estimate \eqref{area}.
\par
As for the second statement, we observe that for a disk $B_R(\xi_0)$, by choosing $x_0=\xi_0$ and
\[
f_{\xi_0}(x)=R\,x+\xi_0,
\]
we have $|f'_{\xi_0}|=R$ and thus
\[
\dot{\mathcal{R}}_{B_R(\xi_0)}\le \left(\frac{1}{2\,\pi}\,\int_{\partial \mathbb{D}} |f'_{\xi_0}|^2\,d\mathcal{H}^1\right)^\frac{1}{2}=R.
\]
The reverse inequality follows from \eqref{area}. This concludes the proof.
\end{proof}
We are now ready for the main result of this subsection.
\begin{thm}
\label{thm:lower-bound-plane}
Let $\delta>0$ and let $\Omega\subset\mathbb{R}^2$ be a bounded simply connected open set, with $C^{1,\alpha}$ boundary, for some $0<\alpha\le 1$. Then it holds
	\begin{equation}
		\label{lower-bound-plane}
\left(\frac{\mathcal{H}^1(\partial\Omega)}{2\,\pi}\right)^2\,\frac{T\left(B_{\dot{\mathcal{R}}_\Omega};\delta\right)}{\dot{\mathcal{R}}_\Omega^2}\le	T(\Omega;\delta).
	\end{equation}
	Moreover, equality holds if and only $\Omega$ is a disk.
\end{thm}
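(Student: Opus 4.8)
\emph{Strategy: conformal transplantation.} Fix a base point $x_0\in\Omega$ and let $f_{x_0}\colon\mathbb{D}\to\Omega$ be the Riemann map with $f_{x_0}(0)=x_0$; since $\partial\Omega$ is $C^{1,\alpha}$, it extends to a $C^1$ diffeomorphism of $\overline{\mathbb{D}}$ onto $\overline{\Omega}$ with non-vanishing derivative on $\partial\mathbb{D}$. Put
\[
R_{x_0}:=\left(\frac{1}{2\,\pi}\int_{\partial\mathbb{D}}|f_{x_0}'|^2\,d\mathcal{H}^1\right)^{1/2},
\]
so that $\dot{\mathcal{R}}_\Omega=\inf_{x_0\in\Omega}R_{x_0}$. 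The plan is to use as a competitor in \eqref{torsionfunctional} the transplant $\varphi\in W^{1,2}(\Omega)$ of the torsion function of the ball $B_{R_{x_0}}$, namely the function characterised by $\varphi\circ f_{x_0}=v$, where $v$ is the radial function on $\mathbb{D}$ obtained from $u_{B_{R_{x_0}},\delta}$ (see Lemma \ref{lem:ball} and the scaling laws) by rescaling $B_{R_{x_0}}$ to the unit disk, $v(w):=u_{B_{R_{x_0}},\delta}(R_{x_0}\,w)$.

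\emph{The main inequality.} I would then transplant the three integrals in the Rayleigh quotient of $\varphi$ back to $B_{R_{x_0}}$. The Dirichlet integral is a conformal invariant and, being also scale-invariant in dimension two, $\int_\Omega|\nabla\varphi|^2\,dx=\int_{\mathbb{D}}|\nabla v|^2\,dw=\int_{B_{R_{x_0}}}|\nabla u_{B_{R_{x_0}},\delta}|^2\,dx$. For the zero-order term, $\int_\Omega\varphi^2\,dx=\int_{\mathbb{D}}v^2\,|f_{x_0}'|^2\,dw$; writing this in polar coordinates and using that $\varrho\mapsto\frac{1}{2\pi\varrho}\int_{\{|w|=\varrho\}}|f_{x_0}'|^2\,d\mathcal{H}^1$ is non-decreasing with value $R_{x_0}^2$ at $\varrho=1$ (Remark \ref{rem:mono}), one gets $\int_\Omega\varphi^2\,dx\le\int_{B_{R_{x_0}}}u_{B_{R_{x_0}},\delta}^2\,dx$. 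Finally, the arc-length change of variables gives $\int_{\partial\Omega}\varphi\,d\mathcal{H}^1=v(1)\int_{\partial\mathbb{D}}|f_{x_0}'|\,d\mathcal{H}^1=v(1)\,\mathcal{H}^1(\partial\Omega)$, where $v(1)$ is the constant boundary value of $u_{B_{R_{x_0}},\delta}$, which by \eqref{torsionintegral} equals $T(B_{R_{x_0}};\delta)/(2\pi R_{x_0})$. Combining these with the energy identity $\int_{B_{R_{x_0}}}|\nabla u_{B_{R_{x_0}},\delta}|^2\,dx+\delta^2\int_{B_{R_{x_0}}}u_{B_{R_{x_0}},\delta}^2\,dx=T(B_{R_{x_0}};\delta)$ (test \eqref{torsionalproblem} with the torsion function itself), one arrives at
\[
T(\Omega;\delta)\ \ge\ \frac{\big(v(1)\,\mathcal{H}^1(\partial\Omega)\big)^2}{T(B_{R_{x_0}};\delta)}\ =\ \left(\frac{\mathcal{H}^1(\partial\Omega)}{2\,\pi}\right)^{2}\frac{T(B_{R_{x_0}};\delta)}{R_{x_0}^2}\qquad\text{for every }x_0\in\Omega.
\]

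\emph{Optimisation over the base point.} By the scaling law \eqref{scaling} one has $T(B_{R_{x_0}};\delta)/R_{x_0}^2=T(B;\delta\,R_{x_0})$, and $\delta\mapsto T(B;\delta)$ is continuous and strictly decreasing — transparent from the Bessel-function formula \eqref{torsiondisc}, and also from \eqref{torsionfunctional}, whose denominator is strictly increasing in $\delta$ along the torsion function. Taking the supremum over $x_0\in\Omega$ of the bound just obtained, i.e.\ letting $R_{x_0}\downarrow\dot{\mathcal{R}}_\Omega$ along an infimising sequence and invoking this continuity, yields $\sup_{x_0}T(B_{R_{x_0}};\delta)/R_{x_0}^2=T(B;\delta\,\dot{\mathcal{R}}_\Omega)=T(B_{\dot{\mathcal{R}}_\Omega};\delta)/\dot{\mathcal{R}}_\Omega^2$, which is exactly \eqref{lower-bound-plane}. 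Note that for the inequality one needs only continuity and monotonicity of $T(B;\cdot)$, not that the infimum in Definition \ref{def:boundary-distortion} is attained.

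\emph{Equality, and the main obstacle.} If $\Omega$ is a disk of radius $R$, then $\dot{\mathcal{R}}_\Omega=R$ (Lemma \ref{lem:simple}) and $\mathcal{H}^1(\partial\Omega)=2\pi R$, so both sides of \eqref{lower-bound-plane} equal $T(B_R;\delta)$ and equality holds. For the converse I would trace equality back through the chain above: choosing a base point $x_0$ with $R_{x_0}=\dot{\mathcal{R}}_\Omega$, every inequality becomes an equality; the saturation of the zero-order estimate, together with the positivity of the torsion function (Remark \ref{rem:smoothness}), forces $\int_{\{|w|=\varrho\}}|f_{x_0}'|^2\,d\mathcal{H}^1=2\pi\varrho\,R_{x_0}^2$ for a.e.\ $\varrho\in(0,1)$, i.e.\ the circular means of $|f_{x_0}'|^2$ are constant, so $|f_{x_0}'|^2$ is harmonic on $\mathbb{D}$. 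Since $\Delta|f_{x_0}'|^2=4\,|f_{x_0}''|^2$, this gives $f_{x_0}''\equiv0$, hence $f_{x_0}$ is affine and $\Omega=f_{x_0}(\mathbb{D})$ is a disk. The main obstacle is making rigorous the existence of such an optimal base point: one needs continuity of $x_0\mapsto R_{x_0}$ on $\Omega$ (from the $C^{1,\alpha}$-regularity of $\partial\Omega$ and the stability of the Riemann map up to the boundary) together with attainment of the infimum defining $\dot{\mathcal{R}}_\Omega$ — e.g.\ by showing $R_{x_0}$ stays bounded away from $\dot{\mathcal{R}}_\Omega$ as $x_0\to\partial\Omega$. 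This is the point where the smoothness hypothesis is genuinely used.
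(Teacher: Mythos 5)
Your proof is correct and follows essentially the same route as the paper: conformal transplantation of the ball's torsion function combined with Hardy's monotonicity of the circular means of $|f_{x_0}'|^2$; the only cosmetic difference in the inequality is that you transplant from $B_{R_{x_0}}$ for each base point and then optimise using the continuity and monotonicity of $\delta\mapsto T(B;\delta)$, whereas the paper normalises $\dot{\mathcal{R}}_\Omega=1$ and runs an $\varepsilon$--argument followed by scaling. For the equality case your mechanism (constant circular means $\Rightarrow$ $|f_{x_0}'|^2$ harmonic $\Rightarrow$ $f_{x_0}''\equiv 0$) differs from the paper's (the transplanted function must solve $-\Delta \widetilde u+\delta^2\widetilde u=0$, forcing $|h_{x_0}'|\equiv 1$), but both are valid and both hinge on the existence of an optimal base point $x_0$ with $R_{x_0}=\dot{\mathcal{R}}_\Omega$ --- the "obstacle" you honestly flag is in fact also left implicit in the paper's own argument.
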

\begin{proof}
We first give the estimate under the assumption that $\dot{\mathcal{R}}_\Omega=1$. By definition, for every $\varepsilon>0$, there exists $x_0\in\Omega$ such that
\begin{equation}
\label{hope}
\frac{1}{2\,\pi}\,\int_{\partial \mathbb{D}} |f'_{x_0}|^2\,d\mathcal{H}^1< 1+\varepsilon.
\end{equation}
For simplicity, we indicate by $u$ the boundary $\delta-$torsion function of the unit disk $\mathbb{D}$, given by \eqref{uball}. With the notation above, we use the test function $\widetilde{u}=u\circ h_{x_0}$, where we have set
\[
h_{x_0}:=f^{-1}_{x_0}:\overline{\Omega}\to\overline{\mathbb{D}}.
\]
Observe that $\widetilde{u}\in W^{1,2}(\Omega)$, thanks to the properties of $f_{x_0}$.
This yields
\begin{equation}
\label{reciprocal}
		\frac{1}{T(\Omega;\delta)}\leq \frac{\displaystyle\int_{\Omega}|\nabla \widetilde u|^2\,dx+ \delta^2\,\int_{\Omega}\widetilde u^2\,dx}{\displaystyle\left(\int_{\partial\Omega}{\widetilde u\,d\mathcal{H}^1}\right)^2}=\left(\frac{\delta\, I_1(\delta)}{I_0(\delta)}\right)^2\,\frac{\displaystyle\int_{\mathbb{D}}{|\nabla u|^2\,dw}+ \delta^2\,\int_{\mathbb{D}}u^2\,|f'_{x_0}(w)|^2\,dw}{\displaystyle (\mathcal{H}^1(\partial\Omega))^2},
	\end{equation}
	where we used that $h_{x_0}:\partial\Omega\to \partial \mathbb{D}$ and that $u$ is radially symmetric so that, by \eqref{uball},
\[
\widetilde u=\frac{I_0(\delta)}{\delta\,I_1(\delta)}, \qquad \mbox{ on } \partial \Omega.
\]
In order to estimate the integral
\[
\int_{\mathbb{D}}u^2\,|f_{x_0}'(w)|^2\,dw=\int_0^1 u^2\,\left(\int_{\{|w|=\varrho\}} |f_{x_0}'|^2\,d\mathcal{H}^1\right)\,d\varrho,
\]
we set
\[
\Phi(\varrho)=\frac{1}{2\,\pi\,\varrho}\,\int_{\{|w|=\varrho\}} |f'_{x_0}|^2\,d\mathcal{H}^1,
\]
then we can rewrite
\[
\int_{\mathbb D}u^2\,|f_{x_0}'(w)|^2\,dw=2\,\pi\,\int_0^1 u^2\,\Phi(\varrho)\,\varrho\,d\varrho.
\]
From Remark \ref{rem:mono} we know that $\varrho\mapsto \Phi(\varrho)$ is monotone non-decreasing. Thus we obtain
\[
\int_{\mathbb D}u^2\,|f'_{x_0}(w)|^2\,dw\le \left(2\,\pi\,\int_0^1 u^2\,\varrho\,d\varrho\right)\,\Phi(1)=\Phi(1)\,\int_\mathbb{D} u^2\,dx.
\]
By recalling \eqref{hope}, this finally gives
\[
\int_{\mathbb D}u^2\,|f'_{x_0}(w)|^2\,dw<(1+\varepsilon)\,\int_\mathbb{D} u^2\,dx.
\]
We insert this estimate into \eqref{reciprocal} and use that $u$ is optimal for the disk. We get
\[
\frac{1}{T(\Omega;\delta)}\le \left(\frac{\delta\, I_1(\delta)}{I_0(\delta)}\right)^2\,\frac{\displaystyle\,\int_{\partial \mathbb{D}} u\,d\mathcal{H}^1+\delta^2\,\varepsilon\,\int_{\mathbb{D}}u^2\,dw}{\displaystyle (\mathcal{H}^1(\partial\Omega))^2}.
\]
If we now let $\varepsilon$ goes to $0$ and use \eqref{torsionintegral}, we obtain
\[
\frac{1}{T(\Omega;\delta)}\le \left(\frac{\delta\, I_1(\delta)}{I_0(\delta)}\right)^2\,\frac{T(\mathbb{D};\delta)}{\displaystyle (\mathcal{H}^1(\partial\Omega))^2}.
\]
Finally, from \eqref{torsiondisc} we know that
\[
\frac{\delta\,I_1(\delta)}{I_0(\delta)}=\frac{2\,\pi}{T(\mathbb{D};\delta)},
\]
from which \eqref{lower-bound-plane} follows, for the case $\dot{\mathcal{R}}_\Omega=1$.
\par
 The general case can be obtained by scaling. Indeed, taking $t=1/\dot{\mathcal{R}_\Omega}$, the scaled set $\widetilde \Omega=t\,\Omega$ has unit boundary distortion radius. Thus, by using the first part of the proof with $\delta/t$ and \eqref{scaling}, we have
\[
T(\Omega;\delta)=t^{-2}\,T\left(\widetilde \Omega;\frac{\delta}{t}\right)\ge t^{-2}\, \left(\frac{\mathcal{H}^1(\partial\widetilde\Omega)}{2\,\pi}\right)^2\,T\left(\mathbb{D};\frac{\delta}{t}\right)=\left(\frac{\mathcal{H}^1(\partial\Omega)}{2\,\pi}\right)^2\,\frac{T\left(B_{\dot{\mathcal{R}}_\Omega};\delta\right)}{\dot{\mathcal{R}}_\Omega^2}.
\]
This concludes the proof of the inequality.
\vskip.2cm\noindent
We now come to the equality cases. We first observe that disks achieve equality in \eqref{lower-bound-plane}, thanks to Lemma \ref{lem:simple}.  Let us now suppose that $\Omega$ is such that equality holds in \eqref{lower-bound-plane}. For simplicity, we can assume again that $\dot{\mathcal{R}}_\Omega=1$. This means that equality must hold everywhere in the proof above. In particular, there exists $x_0\in\Omega$ such that $\widetilde{u}=u\circ h_{x_0}$ must be the boundary $\delta-$torsion function of $\Omega$. This in particular implies that $\widetilde{u}$ must solve
\[
-\Delta \widetilde{u}+\delta^2\,\widetilde{u}=0,\qquad \mbox{ in }\Omega.
\]
By computing the Laplacian
 of $\widetilde{u}$, which is given by
 \[
\Delta \widetilde u(x)=|h'_{x_0}(x)|^2\,\Delta u(h_{x_0}(x)),\qquad \mbox{ for } x\in\Omega,
\] and using that $u$ solves
\[
-\Delta u+\delta^2\,u=0,\qquad \mbox{ in }\mathbb{D},
\]
we then obtain that
\[
(1-|h'_{x_0}|^2)\,\delta^2\,\widetilde{u}=0,\qquad \mbox{ in }\Omega.
\]
Recalling that $u$ is positive, the last identity entails that we must have $|h'_{x_0}|\equiv 1$ in $\Omega$ and thus $|f'_{x_0}|\equiv 1$ in $\mathbb{D}$. This implies that $f'_{x_0}$ is actually an isometry and thus $\Omega$ is a disk.
\end{proof}

\subsection{Convex sets: lower bound}\label{section:lowebound}

The following technical lemma will be used in a while. We will use again the notation $\mathrm{I}=(0,1)$.
\begin{lem}
\label{lm:alpha_parameter}
	For every $\delta>0$, we have
		\begin{equation}\label{alpha functional}
		\alpha(\delta):=\sup_{\varphi \in W^{1,2}(\mathrm{I})}\frac{\left(\varphi(0)\right)^2}{\displaystyle\int_\mathrm{I} |\varphi'|^2\,dt+\delta^2\,\int_\mathrm{I} \varphi^2\;dt}=\frac{1}{\delta\,\tanh(\delta)}.
	\end{equation}
Moreover, the function
\[
u_\mathrm{I}(t)=\frac{1}{\delta}\,\left(\frac{\cosh(\delta\,t)}{\tanh(\delta)} -\sinh(\delta\, t)\right),
\]
attains the supremum in \eqref{alpha functional}.
\end{lem}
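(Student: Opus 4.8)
The plan is to solve explicitly the one-dimensional variational problem defining $\alpha(\delta)$. First I would observe that, exactly as in the proof of Proposition~\ref{prop:torsion}, the supremum in \eqref{alpha functional} is attained and the maximizer $\psi$ is, up to normalization, the unique weak solution of the Euler--Lagrange problem
\[
\left\{\begin{array}{rcll}
-\psi''+\delta^2\,\psi&=&0,&\text{in }\mathrm{I},\\
-\psi'(0)&=&1,&\\
\psi'(1)&=&0.&
\end{array}\right.
\]
Here the sign convention on the boundary data is the one-dimensional analogue of the Neumann condition $\langle\nabla u,\nu_\Omega\rangle=1$: at the left endpoint $t=0$ the outer normal points in the direction of decreasing $t$, so the flux condition reads $-\psi'(0)=1$; at $t=1$ we impose the homogeneous condition, which amounts to taking the endpoint $t=1$ as a ``free'' boundary where no contribution to the functional appears (only $\varphi(0)$ enters the numerator). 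Equivalently, one can phrase this as the $\delta$-torsion-type problem on $\mathrm{I}$ with all the boundary mass concentrated at $t=0$.

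Next I would write down the general solution $\psi(t)=A\cosh(\delta t)+B\sinh(\delta t)$ of the ODE and determine $A,B$ from the two boundary conditions. The condition $\psi'(1)=0$ gives $A\sinh(\delta)+B\cosh(\delta)=0$, i.e.\ $B=-A\tanh(\delta)$; the condition $-\psi'(0)=1$ gives $-\delta B=1$, hence $B=-1/\delta$ and $A=1/(\delta\tanh(\delta))$. This yields precisely
\[
u_\mathrm{I}(t)=\frac{1}{\delta}\left(\frac{\cosh(\delta t)}{\tanh(\delta)}-\sinh(\delta t)\right),
\]
so that the candidate maximizer in the statement is indeed the Euler--Lagrange solution. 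Then, just as in the derivation of \eqref{torsionintegral} in Proposition~\ref{prop:torsion}, testing the equation against $u_\mathrm{I}$ itself gives
\[
\int_\mathrm{I}|u_\mathrm{I}'|^2\,dt+\delta^2\int_\mathrm{I} u_\mathrm{I}^2\,dt=u_\mathrm{I}(0),
\]
whence the optimal Rayleigh quotient equals $\big(u_\mathrm{I}(0)\big)^2/u_\mathrm{I}(0)=u_\mathrm{I}(0)$. Evaluating $u_\mathrm{I}(0)=\frac{1}{\delta\tanh(\delta)}$ gives the claimed value $\alpha(\delta)=\frac{1}{\delta\tanh(\delta)}$.

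The only genuinely non-routine point is justifying the ``$\ge$'' half of the identity with full rigour, i.e.\ that the supremum is actually attained by $u_\mathrm{I}$ rather than merely bounded below by the quotient it produces. One clean way is to bypass the abstract existence argument and argue directly: for an arbitrary $\varphi\in W^{1,2}(\mathrm{I})$, write $\varphi=u_\mathrm{I}+ w$, use the weak formulation $\int_\mathrm{I}(u_\mathrm{I}'w'+\delta^2 u_\mathrm{I} w)\,dt=w(0)$, and expand
\[
\int_\mathrm{I}|\varphi'|^2\,dt+\delta^2\int_\mathrm{I}\varphi^2\,dt
=u_\mathrm{I}(0)+2\,w(0)+\int_\mathrm{I}|w'|^2\,dt+\delta^2\int_\mathrm{I} w^2\,dt,
\]
while $\big(\varphi(0)\big)^2=\big(u_\mathrm{I}(0)+w(0)\big)^2$; a short computation with Young's inequality (or simply completing the square, using that $W^{1,2}(\mathrm{I})\hookrightarrow C(\overline{\mathrm{I}})$ so the trace at $0$ is well defined and controlled) then shows the quotient never exceeds $u_\mathrm{I}(0)$, with equality along multiples of $u_\mathrm{I}$. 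Alternatively, one may simply invoke Lemma~\ref{lem:basics} / Proposition~\ref{prop:torsion} in the one-dimensional setting, since \eqref{alpha functional} is exactly a boundary $\delta$-torsional rigidity on the interval with boundary set $\{0\}$, and then the explicit ODE computation above identifies both the maximizer and the value. I expect the sign bookkeeping in the boundary conditions and the verification that $u_\mathrm{I}$ is nonnegative on $\mathrm{I}$ (which follows from $\cosh(\delta t)/\tanh(\delta)\ge\sinh(\delta t)$, i.e.\ $\cosh(\delta t)\ge\tanh(\delta)\sinh(\delta t)$, true since $\tanh(\delta)<1\le\cosh(\delta t)/\sinh(\delta t)$ for $t<1$) to be the only places where care is needed.
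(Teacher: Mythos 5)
Your proposal is correct and follows essentially the same route as the paper: reduce to the Euler--Lagrange problem $-\varphi''+\delta^2\varphi=0$ with $\varphi'(0)=-1$, $\varphi'(1)=0$, solve it explicitly, and read off $\alpha(\delta)=u_{\mathrm{I}}(0)=1/(\delta\tanh\delta)$ (the paper justifies optimality via the unconstrained concave reformulation, as in \eqref{tminimization}, which is exactly your second option). One small caveat on your direct argument: expanding the Rayleigh quotient around $u_{\mathrm{I}}$ with $\varphi=u_{\mathrm{I}}+w$ reduces to showing $w(0)^2\le u_{\mathrm{I}}(0)\,a(w,w)$ for the form $a(\varphi,\psi)=\int_{\mathrm{I}}(\varphi'\psi'+\delta^2\varphi\psi)\,dt$, which is the very inequality being proved; the non-circular versions are Cauchy--Schwarz for $a$, namely $(\varphi(0))^2=\bigl(a(u_{\mathrm{I}},\varphi)\bigr)^2\le a(u_{\mathrm{I}},u_{\mathrm{I}})\,a(\varphi,\varphi)$, or completing the square in the unconstrained functional $2\varphi(0)-a(\varphi,\varphi)$.
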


\begin{proof}
An argument as in \eqref{tminimization} yields that the maximization problem \eqref{alpha functional} can be rephrased as
	\begin{equation}\label{alpha variational}
		\alpha(\delta)=\sup_{\varphi\in W^{1, 2}(\mathrm{I})}\left\{2\,\varphi(0)-\int_\mathrm{I} |\varphi'|^2\,dt-\delta^2\,\int_\mathrm{I} \varphi^2\,dt\right\}.
	\end{equation}
	The existence of a (unique) maximizer $u_I$ for \eqref{alpha variational} is again easily proven by the Direct Method. Moreover, by concavity of the maximization problem, we have that $\varphi$ solves \eqref{alpha variational} if and only if it verifies the optimality condition
\[
		\psi(0)=\int_\mathrm{I} \varphi'\,\psi'\,dt +\delta^2\,\int_\mathrm{I} \varphi\,\psi\, dt, \qquad \mbox{ for every } \psi\in W^{1, 2}(\mathrm{I}).
\]
This is the weak formulation of
\[
		\left\{\begin{array}{rcll}
		-\varphi''+\delta^2\,\varphi&=& 0, &\mbox{in }\mathrm{I},\\
			\varphi'(0)&=&-1,&\\
			\varphi'(1)&=&0.&
		\end{array}\right.
\]
The latter is uniquely solved by the function
\[
t\mapsto \frac{1}{\delta}\,\left(\frac{\cosh(\delta\,t)}{\tanh(\delta)} -\sinh(\delta\, t)\right),
\] 	
which then coincides with the unique maximizer $u_I$. From \eqref{alpha variational} and the equation in weak form we get
\[
\alpha(\delta)=2\,u_I(0)-\int_\mathrm{I} |u_I'|^2\,dt-\delta^2\,\int_\mathrm{I}  u_I^2\,dt=u_\mathrm{I}(0).
\]
By using the expression $u_\mathrm{I}$, we conclude.
\end{proof}
We recall the definition of {\it inradius} of $\Omega$, i.e.
\[
r_\Omega=\sup_{x\in\Omega}d_\Omega(x).
\]
Observe that this is the radius of a largest ball inscribed in $\Omega$.
We can then prove the following sharp lower bound on $T(\Omega;\delta)$ for a convex set. This is somehow similar to an estimate by P\'olya, which provides a lower bound on the usual torsional rigidity in terms of volume and perimeter, see \cite{polya1960two}. We employ the same technique, i.e. the {\it method of interior parallels}, introduced by Makai \cite{makai1959bounds,Mak} and P\`olya \cite{polya1960two}.
\begin{thm}\label{thm-lower-bound}
Let $\delta>0$ and let $\Omega\subset\mathbb{R}^N$ be an open bounded convex set. We have the following estimate
\[
		T(\Omega;\delta)>\frac{\mathcal{H}^{N-1}(\partial\Omega)}{\delta\,\tanh{(\delta\, r_\Omega)}}.
\]
Moreover, the estimate is sharp in the following sense: we have
\[
\lim_{n\to\infty} \frac{T(\Omega_n;\delta)\,\tanh(\delta\,r_{\Omega_n})}{\mathcal{H}^{N-1}(\partial\Omega_n)}=\frac{1}{\delta},\qquad \mbox{ for  }\Omega_n:=(-n, n)^{N-1}\times(-1, 1).
\]
\end{thm}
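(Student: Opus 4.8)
The plan is to produce a single explicit competitor in the variational characterization \eqref{torsionfunctional} of $T(\Omega;\delta)$, built from the distance function, and to reduce the resulting $N$-dimensional Rayleigh quotient to the one-dimensional problem solved in Lemma \ref{lm:alpha_parameter}, by means of the coarea formula and the geometry of inner parallel sets. First I would take $\varphi(x)=g(d_\Omega(x))$ with $g\in W^{1,2}((0,r_\Omega))$ to be chosen later. Since $d_\Omega$ is $1$-Lipschitz with $|\nabla d_\Omega|=1$ for a.e. $x\in\Omega$, this $\varphi$ belongs to $W^{1,2}(\Omega)$ and $|\nabla\varphi|=|g'(d_\Omega)|$ a.e.; moreover $d_\Omega\equiv 0$ on $\partial\Omega$, so $\int_{\partial\Omega}\varphi\,d\mathcal H^{N-1}=g(0)\,\mathcal H^{N-1}(\partial\Omega)$. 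Setting $\mu(t):=\mathcal H^{N-1}(\{x\in\Omega:d_\Omega(x)=t\})$ for the perimeter of the inner parallel set at distance $t$, the coarea formula gives $\int_\Omega F(d_\Omega)\,dx=\int_0^{r_\Omega}F(t)\,\mu(t)\,dt$ for non-negative $F$, whence
\[
T(\Omega;\delta)\ \ge\ \frac{g(0)^2\,\mathcal H^{N-1}(\partial\Omega)^2}{\displaystyle\int_0^{r_\Omega}\big(g'(t)^2+\delta^2\,g(t)^2\big)\,\mu(t)\,dt}.
\]

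The geometric input is that, $\Omega$ being convex, the sets $\{d_\Omega>t\}$ are convex and nested, so $t\mapsto\mu(t)$ is non-increasing on $(0,r_\Omega)$ and $\mu(t)\le \mathcal H^{N-1}(\partial\Omega)=:P$ (equivalently, the nearest-point projection onto a convex set is $1$-Lipschitz and surjective onto the parallel boundary; equivalently $-\Delta d_\Omega\ge 0$ as a measure). Replacing $\mu$ by $P$ in the denominator and then choosing $g$ to be the maximizer of
\[
\sup_{g\in W^{1,2}((0,r_\Omega))}\frac{g(0)^2}{\displaystyle\int_0^{r_\Omega}g'(t)^2\,dt+\delta^2\int_0^{r_\Omega}g(t)^2\,dt},
\]
which by Lemma \ref{lm:alpha_parameter} and the rescaling $t\mapsto t/r_\Omega$ equals $r_\Omega\,\alpha(\delta r_\Omega)=1/(\delta\tanh(\delta r_\Omega))$, I obtain $T(\Omega;\delta)\ge P/(\delta\tanh(\delta r_\Omega))$. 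For the strict inequality I would note that this optimizer $g$ is the rescaling of the function $u_{\mathrm I}$ of Lemma \ref{lm:alpha_parameter}, hence $g>0$ on $[0,r_\Omega]$ and $g'<0$ on $(0,r_\Omega)$ (indeed $u_{\mathrm I}'(t)=\sinh(\delta(t-1))/\sinh\delta$), so $g'(t)^2+\delta^2 g(t)^2>0$ on $(0,r_\Omega)$; on the other hand, for a bounded convex set one has $\mu(t)<P$ on a set of positive measure (since $\mu$ is non-increasing and $\int_0^{r_\Omega}\mu=|\Omega|<P\,r_\Omega$, the side contributions to the perimeter being genuinely present). Therefore $\int_0^{r_\Omega}(g'^2+\delta^2g^2)\,\mu\,dt< P\int_0^{r_\Omega}(g'^2+\delta^2g^2)\,dt$, and the lower bound is strict.

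Finally, the asymptotic sharpness is a direct computation with the closed form of Lemma \ref{lem:hyper}. For $\Omega_n=(-n,n)^{N-1}\times(-1,1)$ one has $r_{\Omega_n}=1$, so $\tanh(\delta r_{\Omega_n})=\tanh\delta$, and in the formula for $T(\Omega_n;\delta)$ exactly one term is of order $n^{N-1}$, namely $\mathcal H^{N-1}(\Sigma_N)/(\delta\tanh\delta)=2(2n)^{N-1}/(\delta\tanh\delta)$, coming from the two large faces $\{|x_N|=1\}$; every other term is $O(n^{N-2})$, because it either carries the factor $1/(\delta\tanh(\delta n))\to 1/\delta$ against a lower-order face measure, or is of the form $\delta^{-2}\,\mathcal H^{N-2}(\Sigma_{k,i})$ with $\mathcal H^{N-2}(\Sigma_{k,i})=O(n^{N-2})$. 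Likewise $\mathcal H^{N-1}(\partial\Omega_n)=2(2n)^{N-1}+O(n^{N-2})$. Dividing and letting $n\to\infty$ gives
\[
\lim_{n\to\infty}\frac{T(\Omega_n;\delta)\,\tanh(\delta r_{\Omega_n})}{\mathcal H^{N-1}(\partial\Omega_n)}=\frac{1}{\delta}.
\]

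I expect the main obstacle to be the careful handling of the distance function of a set that is merely convex (hence only Lipschitz): that $\varphi=g\circ d_\Omega$ is an admissible competitor, the validity of $|\nabla d_\Omega|=1$ a.e. and of the coarea identity with the parallel-set perimeters $\mu(t)$, and, most delicately, the monotonicity of $\mu$ together with the strict sub-maximality $\mu(t)<P$ on a positive-measure set (alternatively, strictness could be obtained by observing that equality would force $g\circ d_\Omega$ to equal a multiple of the smooth function $u_{\Omega,\delta}$, contradicting the ridge singularity of $d_\Omega$). The remaining steps — the one-dimensional reduction, the scaling in Lemma \ref{lm:alpha_parameter}, and the hyperrectangle computation — are routine.
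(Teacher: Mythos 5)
Your proposal is correct and follows essentially the same route as the paper: the competitor $g\circ d_\Omega$, the coarea formula with $|\nabla d_\Omega|=1$, the convexity-based bound $\mathcal{H}^{N-1}(\{d_\Omega=t\})\le\mathcal{H}^{N-1}(\partial\Omega)$ (with strictness giving the strict inequality), reduction to the one-dimensional problem of Lemma \ref{lm:alpha_parameter} via scaling, and the hyperrectangle computation from Lemma \ref{lem:hyper} for sharpness. The only cosmetic difference is that the paper fixes $r_\Omega=1$ first and plugs in $u_I$ directly rather than optimizing over $g$ afterwards, and it obtains strictness from the pointwise strict inequality for the parallel surface measures rather than from your integrated version; both rest on the same geometric fact.
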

\begin{proof}
We first prove the estimate under the assumption that $r_\Omega=1$.
We define a non-negative test function for the torsion functional \eqref{torsionfunctional} of the form
\[
		\varphi(x)=u_I(d_\Omega(x)),\qquad \mbox{ for } x\in\Omega,
\]
	where $u_I$ is the same function as in Lemma \ref{lm:alpha_parameter}. We denote
	\[
	\Omega_t=\Big\{x\in\Omega\, :\, d_\Omega(x)=t\Big\},\qquad \mbox{ for } t\in[0,1].
	\]
    Then, by using the Coarea Formula and the fact that $|\nabla d_\Omega|=1$ almost everywhere in $\Omega$, we get
    	\[
    \begin{split}
   	T(\Omega;\delta)\geq\frac{\left(\displaystyle\int_{\partial\Omega}\varphi\,d\mathcal{H}^{N-1}\right)^2}
    {\displaystyle\int_{\Omega} |\nabla \varphi|^2\,dx+ \delta^2\,\int_\Omega \varphi^2\,dx}&=\frac{(u_I(0))^2\,(\mathcal{H}^{N-1}(\partial\Omega))^2}{\displaystyle\int_\Omega |u'_I(d_\Omega(x))|^2\,dx+\delta^2\,\int_\Omega (u_I(d_\Omega(x))^2\,dx}\\
    &=\frac{(u_I(0))^2\,(\mathcal{H}^{N-1}(\partial\Omega))^2}{\displaystyle\int_0^1 \Big[|u'_I(t)|^2\,+\delta^2\,(u_I(t))^2\Big]\,\mathcal{H}^{N-1}(\partial \Omega_t)\,dt}\\
   &\ge \frac{(u_I(0))^2}{\displaystyle\int_0^1 \Big[|u'_I(t)|^2\,+\delta^2\,(u_I(t))^2\Big]\,dt}\,\mathcal{H}^{N-1}(\partial\Omega).
      \end{split}
    \]
 In the last inequality we used that
\begin{equation}
\label{lema surface difference}
\mathcal{H}^{N-1}(\partial\Omega_t)\le \mathcal{H}^{N-1}(\partial\Omega),\qquad \mbox{ for } t\in(0,r_\Omega),
\end{equation}
which follows from convexity, see for example \cite[Lemma 2.2.2]{BB}.
 Observe that this inequality is strict, for an open bounded convex set. If we now apply Lemma \ref{lm:alpha_parameter}, we get the desired estimate in the case $r_\Omega=1$.
  \par
 The general case now follows by scaling: indeed, by taking $t=1/r_\Omega$, the scaled set $\widetilde \Omega=t\,\Omega$ has unit inradius. Thus, by using the first part of the proof with $\delta/t$ and \eqref{scaling}, we have
\[
T(\Omega;\delta)=t^{-N}\,T\left(\widetilde \Omega;\frac{\delta}{t}\right)>t^{-N}\,\frac{\mathcal{H}^{N-1}(\partial\widetilde\Omega)}{\dfrac{\delta}{t}\,\tanh{\left(\dfrac{\delta}{t}\right)}}=\frac{1}{t}\,\frac{\mathcal{H}^{N-1}(\partial\Omega)}{\dfrac{\delta}{t}\,\tanh(\delta\,r_\Omega)}=\frac{\mathcal{H}^{N-1}(\partial\Omega)}{\delta\,\tanh{(\delta\, r_\Omega)}},
\]
as desired.
\vskip.2cm\noindent
We now come to the proof of the sharpness. Let us consider $\Omega_n:=(-n, n)^{N-1}\times(-1,1)\subseteq\mathbb{R}^N$. By Lemma \ref{lem:hyper} with the choices
\[
\ell_1=\dots=\ell_{N-1}=n\qquad \mbox{ and }\qquad \ell_N=1,
\]
we know that
\[
u_{\Omega_n,\delta}(x)=\frac{\cosh(\delta\,x_N)}{\delta\,\sinh(\delta)}+\frac{1}{\delta\,\sinh(\delta\, n)}\,\sum_{i=1}^{N-1}\cosh(\delta\, x_i),
\]
and
\[
\begin{split}
T(\Omega_n;\delta)&=\sum_{k=1}^{N-1} \left[\frac{1}{\delta\,\tanh(\delta\,n)}\,\mathcal{H}^{N-1}(\Sigma_k)+\sum_{i\not =k} \frac{1}{\delta^2}\,\mathcal{H}^{N-2}(\Sigma_{k,i})\right]\\
&+\left[\frac{1}{\delta\,\tanh(\delta)}\,\mathcal{H}^{N-1}(\Sigma_N)+\sum_{i=1}^{N-1} \frac{1}{\delta^2}\,\mathcal{H}^{N-2}(\Sigma_{N,i})\right]
\end{split},
\]
where we used the notation $\Sigma_k$ and $\Sigma_{k,i}$ from Lemma \ref{lem:hyper}. We now observe that
\[
\mathcal{H}^{N-1}(\Sigma_N)=2^N\, n^{N-1},\qquad \mathcal{H}^{N-1}(\Sigma_k)=2^N\,n^{N-2},\quad \mbox{ for } k=1,\dots,N-1,
\]
\[
\mathcal{H}^{N-2}(\Sigma_{N,i})=2^N\,n^{N-2},\qquad \mbox{ for } i=1,\dots,N-1,
\]
while (this occurs only for $N\ge 3$)
\[
\mathcal{H}^{N-2}(\Sigma_{k,i})=2^{N}\,n^{N-3},\quad \mbox{ for } k=1,\dots,N-1, \ i\in\{1,\dots,N-1\}\setminus\{k\},
\]
and
\[
\mathcal{H}^{N-2}(\Sigma_{k,N})=2^N\,n^{N-2},\quad \mbox{ for } k=1,\dots,N-1.
\]
This finally shows that
\[
T(\Omega_n;\delta)\sim \frac{1}{\delta\,\tanh(\delta)}\,\mathcal{H}^{N-1}(\Sigma_N)=\frac{1}{\delta\,\tanh(\delta)}\,2^N\, n^{N-1},\qquad \mbox{ as } n\to\infty.
\]
As for the measure of the boundary, we have
\[
\mathcal{H}^{N-1}(\partial\Omega_n)=2\,(2\,n)^{N-1}+2\,(N-1)\,(2\,n)^{N-2}\sim 2^N\,n^{N-1},\qquad \mbox{ as } n\to\infty,
\]
while clearly by construction we have $r_{\Omega_n}=1$. By gathering all these informations, we get
\[
\frac{T(\Omega_n;\delta)\,\tanh(\delta\,r_{\Omega_n})}{\mathcal{H}^{N-1}(\partial\Omega_n)}\sim \frac{1}{\delta\,\tanh(\delta)}\,\frac{2^N\, n^{N-1}}{2^N\,n^{N-1}}\,\tanh(\delta)=\frac{1}{\delta},\qquad \mbox{ as } n\to\infty,
\]
as claimed.
\end{proof}
\begin{rem}\label{remark:weakly-superharmonic}
We observe that the convexity assumption in the previous result has been used only to guarantee the property \eqref{lema surface difference}.
Thus Theorem \ref{thm-lower-bound} continues to hold for every open bounded Lipschitz set $\Omega$ which enjoys this property. This is the case, for example, when $N=2$ and $\Omega$ is simply connected or doubly connected, see \cite[Section 2]{Her}. Other sets having property \eqref{lema surface difference} are those for which the distance function $d_\Omega$ is weakly superharmonic in $\Omega$, i.e. such that
\[
\int_\Omega \langle \nabla d_\Omega,\nabla \varphi\rangle\,dx\ge 0,\qquad \mbox{ for every } \varphi\in C^\infty_0(\Omega) \mbox{ with } \varphi\ge 0,
\]
see for example \cite[Remark 3.2]{BrT}.
\par
We recall that on a convex set $\Omega$ the distance $d_\Omega$ is always weakly superharmonic, since it is a concave Lipschitz function. However, the weak superharmonicity of $d_\Omega$ is equivalent to the convexity of $\Omega$ only for dimension $N=2$ (see \cite[Theorem 2]{AU}), while in higher dimension this is a weaker condition. We refer to \cite[Section 5]{AU} for an example of non-convex set with superharmonic distance.
\par
Finally, we also recall that if $\Omega$ has a $C^2$ boundary, the superharmonicity of $d_\Omega$ is equivalent to the fact that the mean curvature of $\partial\Omega$ is non-negative (see \cite{LLL}).
\end{rem}

\subsection{Convex sets: upper bounds}
\label{section:upper-bound}

By appealing to the dual formulation \eqref{dual functional}, we can obtain the following upper bound of geometric flavor, which can be seen as a sort of Steklov version of the so-called {\it Diaz-Weinstein inequality} (see \cite{DW}). For this, we first need to recall the definition of {\it high ridge set} of an open bounded set $\Omega\subset\mathbb{R}^N$. This is given by
\[
M(\Omega):=\Big\{x\in\Omega\,:\, B_{r_\Omega}(x)\subset\Omega\Big\},
\]
i.e. this is the collection of centers of maximal balls inscribed in $\Omega$.
\begin{prop}
	Let $\Omega\subset \mathbb{R}^N$ be an open bounded convex set. Then, we have the following upper bound
	\begin{equation}
	\label{upper_convex}
		T(\Omega;\delta)\leq \frac{1}{r_\Omega^2}\,\left(\mathcal{I}_\#(\Omega)+\frac{N^2}{\delta^2}\,|\Omega|\right),\qquad\mbox{where}\quad \mathcal{I}_\#(\Omega)=\min_{x_0\in M(\Omega)}\int_{\Omega}\lvert x-x_0\rvert^2\,dx.
	\end{equation}
\end{prop}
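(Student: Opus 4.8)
The plan is to read off the bound from the dual (minimization) formulation of Lemma~\ref{lm:dual_formulation}: since
\[
T(\Omega;\delta)=\min_{(\phi,g)\in\mathcal{A}^+(\Omega)}\left\{\int_{\Omega}|\phi|^2\,dx+\delta^2\int_{\Omega}g^2\,dx\right\},
\]
it suffices to exhibit a single admissible competitor $(\phi,g)$ with the right energy. Fix a center $x_0\in M(\Omega)$ of a maximal inscribed ball, so $B_{r_\Omega}(x_0)\subset\Omega$, and take the \emph{radial vector field} and a \emph{constant}:
\[
\phi(x)=\frac{x-x_0}{r_\Omega},\qquad g(x)\equiv\frac{N}{\delta^2\,r_\Omega}.
\]
Because $\operatorname{div}\phi\equiv N/r_\Omega$, we have $-\operatorname{div}\phi+\delta^2\,g\equiv 0$, so the interior condition defining $\mathcal{A}^+(\Omega)$ holds with equality.

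\textbf{The key geometric step.} One must check $\langle\phi,\nu_\Omega\rangle\ge 1$ on $\partial\Omega$, i.e. $\langle x-x_0,\nu_\Omega(x)\rangle\ge r_\Omega$ for $\mathcal H^{N-1}$-a.e.\ $x\in\partial\Omega$. At such a point the convex set $\Omega$ admits a supporting hyperplane $H_x$ with outer normal $\nu_\Omega(x)$; since the inscribed ball $B_{r_\Omega}(x_0)\subset\Omega$ lies on the inner side of $H_x$, the distance from $x_0$ to $H_x$ is at least $r_\Omega$, and that distance is exactly $\langle x-x_0,\nu_\Omega(x)\rangle$. To see that $(\phi,g)$ lies in $\mathcal{A}^+(\Omega)$ in the weak sense required in Lemma~\ref{lm:dual_formulation}, integrate by parts (valid since $\phi$ is affine and $\partial\Omega$ is Lipschitz): for non-negative $\varphi\in W^{1,2}(\Omega)$,
\[
\int_{\Omega}\langle\phi,\nabla\varphi\rangle\,dx+\delta^2\int_{\Omega}g\,\varphi\,dx=\int_{\partial\Omega}\varphi\,\langle\phi,\nu_\Omega\rangle\,d\mathcal{H}^{N-1}+\int_{\Omega}\varphi\,(-\operatorname{div}\phi+\delta^2 g)\,dx\ge\int_{\partial\Omega}\varphi\,d\mathcal{H}^{N-1},
\]
using the two pointwise bounds just established together with $\varphi\ge 0$.

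\textbf{Conclusion.} Plugging this competitor into \eqref{dual functional} gives
\[
T(\Omega;\delta)\le\int_{\Omega}|\phi|^2\,dx+\delta^2\int_{\Omega}g^2\,dx=\frac{1}{r_\Omega^2}\int_{\Omega}|x-x_0|^2\,dx+\frac{N^2}{\delta^2\,r_\Omega^2}\,|\Omega|.
\]
Since $x_0\in M(\Omega)$ was arbitrary, minimizing the right-hand side over $M(\Omega)$ turns $\int_\Omega|x-x_0|^2\,dx$ into $\mathcal I_\#(\Omega)$, which is exactly \eqref{upper_convex}. (The minimum is attained: $M(\Omega)$ is a nonempty compact convex set and $x_0\mapsto\int_\Omega|x-x_0|^2\,dx$ is a strictly convex continuous function.) I expect the only genuinely non-routine point to be the supporting-hyperplane inequality $\langle x-x_0,\nu_\Omega\rangle\ge r_\Omega$; the divergence-theorem check and the energy computation are immediate.
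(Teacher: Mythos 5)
Your proposal is correct and follows essentially the same route as the paper: the authors also plug the pair $\phi_0=(x-x_0)/r_\Omega$, $g_0=N/(\delta^2 r_\Omega)$ into the dual formulation of Lemma \ref{lm:dual_formulation} and invoke the convexity inequality $\langle x-x_0,\nu_\Omega\rangle\ge r_\Omega$ (which they cite from the literature rather than reprove via the supporting-hyperplane argument you give). Your additional verifications (the weak-sense admissibility via integration by parts and the attainment of the minimum over $M(\Omega)$) are correct and only make the argument more self-contained.
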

\begin{proof}
Let $x_0\in M(\Omega)$, then accordingly we have $B_{r_\Omega}(x_0)\subset\Omega$.
We make the following choices
	\begin{equation}
	\label{trial}
		 \phi_0=\frac{x-x_0}{r_\Omega}\qquad\mbox{and}\qquad g_0=\frac{N}{\delta^2\, r_\Omega},
	\end{equation}
and observe that $(\phi_0,g_0)\in \mathcal{A}^+(\Omega)$. Indeed, we clearly have by construction
\[
-\mathrm{div}\phi_0+\delta^2\,g=0,\qquad \mbox{ in }\Omega.
\]
As for the flux condition on the boundary, convexity of $\Omega$ entails that
\[
\langle x-x_0,\nu_{\Omega}\rangle \geq r_\Omega,\qquad \mbox{ for $\mathcal{H}^{N-1}-$a.\,e. } x\in\partial\Omega,
\]	
see for example \cite[Lemma 2.1]{BM}. By appealing to Lemma \ref{lm:dual_formulation} and to the arbitrariness of $x_0\in M(\Omega)$, we finally get the claimed estimate.
\end{proof}
\begin{rem}
By recalling the characterization for extremals of the dual problem \eqref{dual functional}, it is not difficult to see that estimate \eqref{upper_convex} is not sharp. Indeed, the trial pair \eqref{trial} is such that
\[
\phi_0\not=\nabla g.
\]
However, thanks to Theorem \ref{thm:asymptotics}, from \eqref{upper_convex} we recover in the limit the inequality
\[
\frac{(\mathcal{H}^{N-1}(\partial\Omega))^2}{|\Omega|}=\lim_{\delta\to 0^+} \delta^2\,T(\Omega;\delta)\le \frac{N^2\,|\Omega|}{r_\Omega^2},
\]
that is
\[
\mathcal{H}^{N-1}(\partial\Omega)\le \frac{N\,|\Omega|}{r_\Omega},
\]
which is sharp. Equality in the latter is attained for balls, for examples.
\end{rem}
\begin{rem}
The geometric quantity $\mathcal{I}_\#(\Omega)$ defined above is quite similar to the more usual one
\[
\mathcal{I}(\Omega)=\min_{x_0\in \mathbb{R}^N}\int_{\Omega}\lvert x-x_0\rvert^2\,dx,
\]
which is called {\it polar moment of inertia of $\Omega$}. However, in general we have
\[
\mathcal{I}_\#(\Omega)\ge \mathcal{I}(\Omega),
\]
and the inequality is strict, unless the barycenter of $\Omega$, which is the unique minimizer for $\mathcal{I}(\Omega)$, belongs to $M(\Omega)$.
\end{rem}
We now exploit once again the dual formulation \eqref{dual functional}, in order to give a sharp upper bound this time.
At this aim, for an open bounded convex set $\Omega\subset\mathbb{R}^N$, we define its {\it proximal radius} by
\[
L_\Omega:=\inf\Big\{R>0\, :\, \exists x_0\in M(\Omega) \mbox{ such that } \Omega\subset B_R(x_0)\Big\}.
\]
In other words, $L_\Omega$ is the radius of the smallest ball containing $\Omega$ and having center in the high ridge set $M(\Omega)$. We will call {\it proximal center} any point $x_\Omega\in M(\Omega)$, such that
\[
\Omega\subset B_{L_\Omega}(x_\Omega).
\]
Such a point exists and is unique by Lemma \ref{lem:proximal} below.
\begin{figure}
\includegraphics[scale=.3]{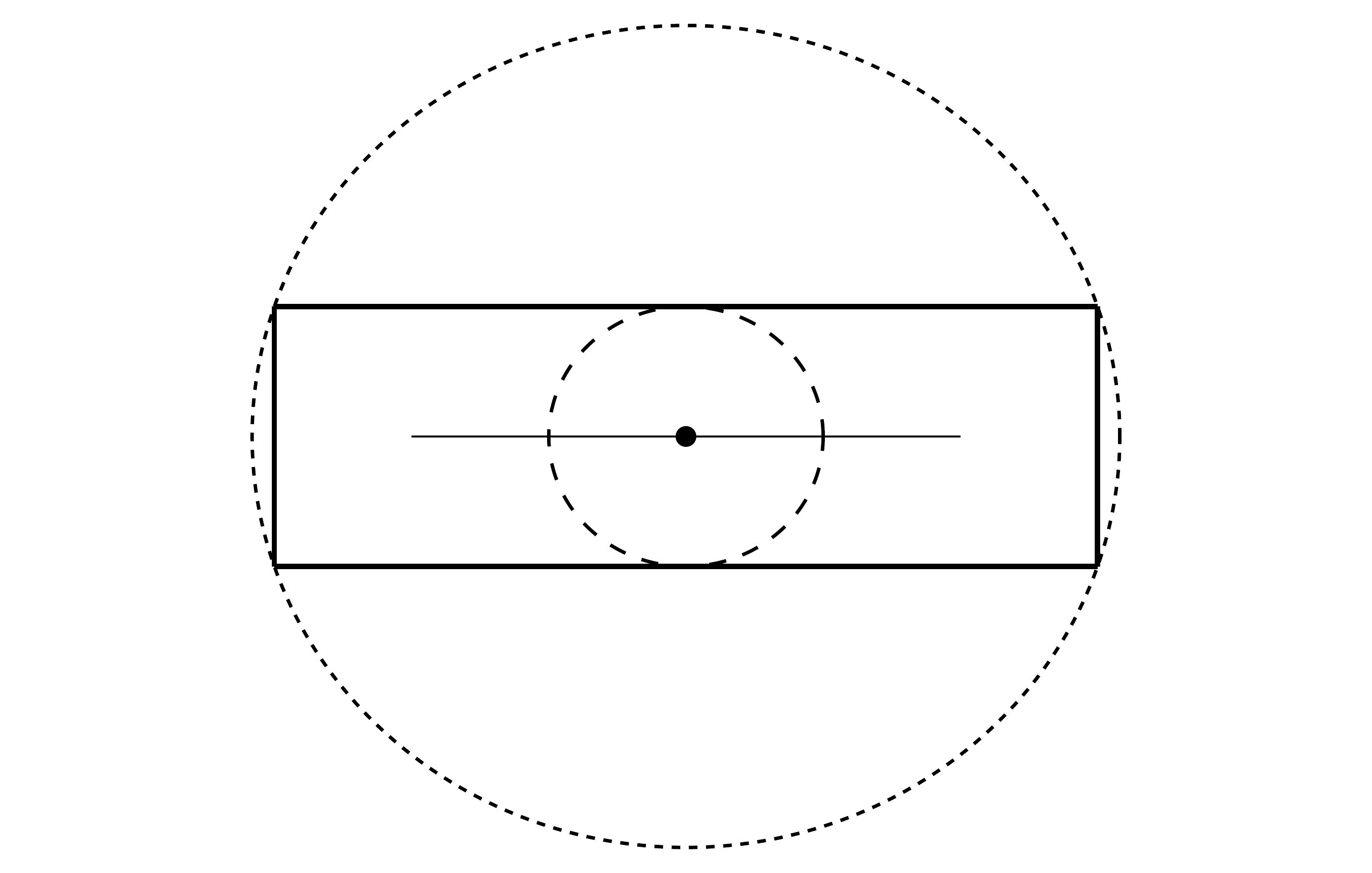}
\caption{The construction of the proximal radius: in bold line, the boundary of the convex set $\Omega$; the thin black line is the high ridge $M(\Omega)$. The black dot is the proximal center, which in this particular example coincides with the center of symmetry of the set.}
\end{figure}
Observe that by construction, for the proximal center we have
\begin{equation}
\label{contained}
B_{r_\Omega}(x_\Omega)\subset \Omega\subset B_{L_\Omega}(x_\Omega).
\end{equation}
This will be crucial for the proof of the following result.
\begin{thm}\label{thm:upper-bound-convex}
Let $\delta>0$ and let $\Omega\subset\mathbb{R}^N$ be an open bounded convex set. With the notation above, we have
\begin{equation}
\label{DWboundary}
T(\Omega;\delta)\le \left(\frac{r_\Omega}{L_\Omega}\right)^{N-2}\,\left(\frac{I_{N/2}(\delta\,L_\Omega)}{I_{N/2}(\delta\,r_\Omega)}\right)^2\,T(B_{L_\Omega};\delta).
\end{equation}
Moreover, equality holds if and only if $\Omega$ is a ball.
\end{thm}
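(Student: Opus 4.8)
The plan is to use the dual minimization formulation \eqref{dual functional} and to produce an explicit competitor pair $(\phi,g)\in\mathcal{A}^+(\Omega)$ built from the boundary $\delta$-torsion function of the circumscribed ball $B_{L_\Omega}$. First I would translate $\Omega$ so that its proximal center $x_\Omega$ is the origin, so that \eqref{contained} reads $B_{r_\Omega}\subset\Omega\subset B_{L_\Omega}$. Set $V:=u_{B_{L_\Omega},\delta}$. By Lemma \ref{lem:ball} and the scaling law \eqref{scaling_u}, $V$ is radial, say $V(x)=\Psi(|x|)$, with $\Psi'\ge 0$, $\Psi'(L_\Omega)=1$, the map $\varrho\mapsto\Psi'(\varrho)/\varrho$ non-decreasing, and (from the explicit formula recorded after Lemma \ref{lem:ball}, together with \eqref{magic})
\[
\Psi'(\varrho)=\left(\frac{\varrho}{L_\Omega}\right)^{1-\frac N2}\frac{I_{N/2}(\delta\,\varrho)}{I_{N/2}(\delta\,L_\Omega)}.
\]
In particular $c:=\Psi'(r_\Omega)>0$ and $c^{-2}$ is exactly the prefactor appearing in \eqref{DWboundary}.

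The heart of the argument is a pointwise lower bound for $\langle\nabla V,\nu_\Omega\rangle$ on $\partial\Omega$. Since $B_{r_\Omega}\subset\Omega\subset B_{L_\Omega}$ we have $r_\Omega\le|x|\le L_\Omega$ for every $x\in\partial\Omega$, while the supporting hyperplane inequality for the convex set $\Omega$ (see \cite[Lemma 2.1]{BM}, already invoked above) gives $\langle x,\nu_\Omega\rangle\ge r_\Omega$ for $\mathcal{H}^{N-1}$-a.e.\ $x\in\partial\Omega$. Since $\Omega$ is a ball away from which $V$ is smooth — more precisely $V\in C^\infty(\overline{B_{L_\Omega}})$ by standard elliptic regularity, the ball having smooth boundary, and $\overline\Omega\subset\overline{B_{L_\Omega}}$ — we have $\nabla V(x)=\Psi'(|x|)\,x/|x|$, and hence
\[
\langle\nabla V,\nu_\Omega\rangle=\frac{\Psi'(|x|)}{|x|}\,\langle x,\nu_\Omega\rangle\ge\frac{\Psi'(|x|)}{|x|}\,r_\Omega\ge\frac{\Psi'(r_\Omega)}{r_\Omega}\,r_\Omega=c\qquad\text{on }\partial\Omega,
\]
where the last inequality uses the monotonicity of $\varrho\mapsto\Psi'(\varrho)/\varrho$ and $|x|\ge r_\Omega$. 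Consequently the pair $(\phi_0,g_0):=(c^{-1}\nabla V,\,c^{-1}V)$ belongs to $\mathcal{A}^+(\Omega)$: a direct integration by parts against non-negative test functions, using $-\Delta V+\delta^2 V=0$ in $B_{L_\Omega}\supset\Omega$ and the displayed boundary bound, verifies the weak admissibility inequality.

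Plugging $(\phi_0,g_0)$ into \eqref{dual functional}, then using $\Omega\subset B_{L_\Omega}$ together with the identity $\int_{B_{L_\Omega}}|\nabla V|^2\,dx+\delta^2\int_{B_{L_\Omega}}V^2\,dx=\int_{\partial B_{L_\Omega}}V\,d\mathcal{H}^{N-1}=T(B_{L_\Omega};\delta)$ (obtained by testing \eqref{torsionalproblem} for the ball with $V$ itself and recalling \eqref{torsionintegral}), I get
\[
T(\Omega;\delta)\le\frac{1}{c^2}\left(\int_\Omega|\nabla V|^2\,dx+\delta^2\int_\Omega V^2\,dx\right)\le\frac{1}{c^2}\,T(B_{L_\Omega};\delta),
\]
which is \eqref{DWboundary} after substituting the value of $c$. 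For the equality cases: $V>0$ on $B_{L_\Omega}$ by the minimum principle, so the second inequality above is strict unless $|B_{L_\Omega}\setminus\Omega|=0$; for an open convex set this forces $\Omega=B_{L_\Omega}$, and conversely balls trivially realize equality (then $r_\Omega=L_\Omega$, $c=1$, and both sides equal $T(B_{L_\Omega};\delta)$).

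The only genuinely non-elementary input is the monotonicity of $\varrho\mapsto\Psi'(\varrho)/\varrho$ for the ball's torsion profile, which is precisely the last assertion of Lemma \ref{lem:ball}; without it the boundary estimate cannot be deduced from $\langle x,\nu_\Omega\rangle\ge r_\Omega$ alone. Everything else reduces to the supporting hyperplane inequality, Bessel-function bookkeeping to identify $c^{-2}$ with the stated constant, and the short convexity argument for the equality discussion.
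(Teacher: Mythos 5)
Your proposal is correct and follows essentially the same route as the paper: the dual formulation \eqref{dual functional} with the competitor pair $\bigl(c^{-1}\nabla u_{B_{L_\Omega},\delta},\,c^{-1}u_{B_{L_\Omega},\delta}\bigr)$, admissibility verified via $\langle x,\nu_\Omega\rangle\ge r_\Omega$ together with the monotonicity of $\varrho\mapsto\mathcal{U}_\delta'(\varrho)/\varrho$ from Lemma \ref{lem:ball}, and the equality discussion via positivity of the torsion function on $B_{L_\Omega}\setminus\Omega$. The normalizing constant $c=\Psi'(r_\Omega)$ coincides with the paper's $C_{\Omega,\delta}$, so the prefactor in \eqref{DWboundary} is identified correctly.
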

\begin{proof}
 Without loss of generality, we can assume that the proximal center of $\Omega$ coincides with the origin. We set for brevity
\begin{equation}
\label{ugly}
C_{\Omega,\delta}=\mathcal{U}'_{\delta\,L_\Omega}\left(\frac{r_\Omega}{L_\Omega}\right),
\end{equation}
where $\mathcal{U}_\delta$ is the function of Lemma \ref{lem:ball}. We then introduce the pair
\[
\phi_0=\frac{1}{C_{\Omega,\delta}}\,\nabla u_{B_{L_\Omega},\delta}\qquad \mbox{ and }\qquad g_0=\frac{1}{C_{\Omega,\delta}}\,u_{B_{L_\Omega},\delta},
\]
where as always $u_{B_{L_\Omega},\delta}$ is the boundary $\delta-$torsional function of the ball $B_{L_\Omega}$.
We observe that by construction and using \eqref{contained}, we have
\[
-\mathrm{div\,}\phi_0+\delta^2\,g_0=\frac{1}{C_{\Omega,\delta}}\,\left(-\Delta u_{B_{L_\Omega},\delta}+\delta^2\,u_{B_{L_\Omega},\delta}\right)=0,\qquad \mbox{ in }\Omega.
\]
Moreover, since we have
\[
u_{B_{L_\Omega},\delta}(x)=L_\Omega\,u_{B,\delta\,L_\Omega}\left(\frac{x}{L_\Omega}\right)=L_\Omega\,\mathcal{U}_{\delta\, L_\Omega}\left(\frac{|x|}{L_\Omega}\right),
\]
we get that on $\partial\Omega$ it holds
\[
\langle \phi_0,\nu_{\partial\Omega}\rangle=\frac{1}{C_{\Omega,\delta}}\,\frac{1}{|x|}\,\mathcal{U}'_{\delta\, L_\Omega}\left(\frac{|x|}{L_\Omega}\right)\,\langle x,\nu_{\partial\Omega}\rangle\ge \frac{1}{C_{\Omega,\delta}}\,\frac{1}{|x|}\,\mathcal{U}'_{\delta\, L_\Omega}\left(\frac{|x|}{L_\Omega}\right)\,r_\Omega,
\]
$\mathcal{H}^{N-1}-$almost everywhere, where we used again \cite[Lemma 2.1]{BM}. We then recall that
\[
\varrho \mapsto \frac{\mathcal{U}_\delta'(\varrho)}{\varrho},
\]
is monotone non-decreasing by Lemma \ref{lem:ball} and by construction
\[
|x|\ge r_\Omega,\qquad \mbox{ for every }x\in\partial\Omega.
\]
This permits to estimate the flux of $\phi_0$ on the boundary of $\Omega$ by
\[
\langle \phi_0,\nu_{\partial\Omega}\rangle\ge \frac{1}{C_{\Omega,\delta}}\,\mathcal{U}'_{\delta\, L_\Omega}\left(\frac{r_\Omega}{L_\Omega}\right).
\]
Recalling the definition \eqref{ugly} of $C_{\Omega,\delta}$, we finally obtain that $(\phi_0,g_0)\in\mathcal{A}^+(\Omega)$. By Lemma \ref{lm:dual_formulation}, we thus obtain
\begin{equation}
\label{ubenlarge}
\begin{split}
T(\Omega;\delta)&\le \frac{1}{(C_{\Omega,\delta})^2}\,\left[\int_\Omega |\nabla u_{B_{L_\Omega},\delta}|^2\,dx+\delta^2\,\int_\Omega (u_{B_{L_\Omega},\delta})^2\,dx\right]\\
&\le \frac{1}{(C_{\Omega,\delta})^2}\,\left[\int_{B_{L_\Omega}} |\nabla u_{B_{L_\Omega},\delta}|^2\,dx+\delta^2\,\int_{B_{L_\Omega}} (u_{B_{L_\Omega},\delta})^2\,dx\right]=\frac{T(B_{L_\Omega};\delta)}{(C_{\Omega,\delta})^2}.
\end{split}
\end{equation}
We now compute the constant $C_{\Omega,\delta}$
\[
\begin{split}
C_{\Omega,\delta}=\mathcal{U}'_{\delta\,L_\Omega}\left(\frac{r_\Omega}{L_\Omega}\right)&=\frac{d}{d\varrho}\left(\frac{(\delta\,L_\Omega\,\varrho)^{1-N/2}\,I_{N/2-1}(\delta\,L_\Omega\, \varrho)}{(\delta\,L_\Omega)^{2-\frac{N}{2}}\,I_{N/2}(\delta\,L_\Omega)}\right)_{|\varrho=\frac{r_\Omega}{L_\Omega}}\\
&=\left(\frac{\varrho^{1-N/2}\,I_{N/2}(\delta\, L_\Omega\,\varrho)}{I_{N/2}(\delta\,L_\Omega)}\right)_{|\varrho=\frac{r_\Omega}{L_\Omega}}\\
&=\left(\frac{r_\Omega}{L_\Omega}\right)^\frac{2-N}{2}\,\frac{I_{N/2}(\delta\,r_\Omega)}{I_{N/2}(\delta\,L_\Omega)},
\end{split}
\]
where we used \eqref{magic} in order to compute the derivative. This concludes the proof of the inequality.
\vskip.2cm\noindent
Let us now discuss equality cases. At first, it is quite clear that for any ball we get equality in \eqref{DWboundary}. Indeed, in this case, the set $M(\Omega)$ just coincides with the center of the ball and $r_\Omega=L_\Omega$. We now assume that $\Omega\subset\mathbb{R}^N$ is an open bounded convex set for which \eqref{DWboundary} holds as an equality. As before, we can suppose that $x_\Omega=0$. Then the equality sign must hold in all the inequalities above. In particular, equality in \eqref{ubenlarge} entails that we must have
\[
\int_\Omega (u_{B_{L_\Omega},\delta})^2\,dx=\int_{B_{L_\Omega}} (u_{B_{L_\Omega},\delta})^2\,dx.
\]
By virtue of \eqref{contained} and the fact that $u_{B_{L_\Omega},\delta}$ does not vanish, we finally get that
\[
|B_{L_\Omega}\setminus \Omega|=0.
\]
By convexity, this fact and \eqref{contained} in turn imply that $\Omega$ must coincide with the ball $B_{L_\Omega}$.
\end{proof}
From the previous result, we can get the following sharp geometric estimate, involving four geometric quantities.
\begin{cor}\label{cor:geometric}
Let $\delta>0$ and let $\Omega\subset\mathbb{R}^N$ be an open bounded convex set. With the notation above, we have
\begin{equation}
\label{DWisop}
\frac{(\mathcal{H}^{N-1}(\partial\Omega))^2}{|\Omega|}\le N^2\,\omega_N\,\left(\frac{L_\Omega}{r_\Omega}\right)^2\,L_\Omega^{N-2}.
\end{equation}
Equality holds if and only if $\Omega$ is a ball.
\end{cor}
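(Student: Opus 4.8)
The plan is to read off \eqref{DWisop} as the limit, as $\delta\to 0^+$, of the sharp upper bound \eqref{DWboundary} of Theorem \ref{thm:upper-bound-convex}, combined with the asymptotic analysis of Theorem \ref{thm:asymptotics} and the small--argument behavior \eqref{Bessel-behavior} of the modified Bessel functions. Since a limiting procedure destroys all rigidity information, the equality cases will be handled separately, by chaining two elementary geometric inequalities that are already available in the paper.

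First I would multiply both sides of \eqref{DWboundary} by $\delta^2$ and let $\delta\to 0^+$. By Theorem \ref{thm:asymptotics} (which applies to the bounded convex set $\Omega$ and to the ball $B_{L_\Omega}$, both being connected with Lipschitz boundary), the left--hand side converges to $\mathcal{H}^{N-1}(\partial\Omega)^2/|\Omega|$, while $\delta^2\,T(B_{L_\Omega};\delta)$ converges to
\[
\frac{\mathcal{H}^{N-1}(\partial B_{L_\Omega})^2}{|B_{L_\Omega}|}=\frac{(N\,\omega_N\,L_\Omega^{N-1})^2}{\omega_N\,L_\Omega^N}=N^2\,\omega_N\,L_\Omega^{N-2}.
\]
Moreover, using \eqref{Bessel-behavior} with index $N/2$ one gets $I_{N/2}(\delta\,L_\Omega)/I_{N/2}(\delta\,r_\Omega)\to (L_\Omega/r_\Omega)^{N/2}$, hence the squared ratio tends to $(L_\Omega/r_\Omega)^{N}$. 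Multiplying the three limits, the prefactor $(r_\Omega/L_\Omega)^{N-2}$ combines with $(L_\Omega/r_\Omega)^{N}\,L_\Omega^{N-2}$ to yield $(L_\Omega/r_\Omega)^{2}\,L_\Omega^{N-2}$, which is exactly \eqref{DWisop}.

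To pin down the equality cases I would instead observe that \eqref{DWisop} also follows directly by combining two inequalities valid for any open bounded convex set: the bound $\mathcal{H}^{N-1}(\partial\Omega)\le N\,|\Omega|/r_\Omega$ recorded in the remark after \eqref{upper_convex} (obtained by integrating $\langle x-x_\Omega,\nu_\Omega\rangle\ge r_\Omega$ over $\partial\Omega$), together with the inclusion $\Omega\subset B_{L_\Omega}(x_\Omega)$ from \eqref{contained}, which gives $|\Omega|\le\omega_N\,L_\Omega^N$. Indeed,
\[
\frac{\mathcal{H}^{N-1}(\partial\Omega)^2}{|\Omega|}\le \frac{N^2\,|\Omega|}{r_\Omega^2}\le \frac{N^2\,\omega_N\,L_\Omega^N}{r_\Omega^2}=N^2\,\omega_N\left(\frac{L_\Omega}{r_\Omega}\right)^2 L_\Omega^{N-2}.
\]
For a ball of radius $R$ we have $r_\Omega=L_\Omega=R$ and both sides of \eqref{DWisop} equal $N^2\,\omega_N\,R^{N-2}$. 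Conversely, if equality holds in \eqref{DWisop}, then both inequalities in the displayed chain must be equalities; in particular $|\Omega|=\omega_N\,L_\Omega^N=|B_{L_\Omega}(x_\Omega)|$, and since $\Omega$ is an open set contained in $B_{L_\Omega}(x_\Omega)$, it must coincide with that ball, so $\Omega$ is a ball.

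The only genuinely delicate point is the equality discussion: the natural derivation through the limit $\delta\to 0^+$ forgets the rigidity of Theorem \ref{thm:upper-bound-convex}, so one cannot simply transplant its equality statement; the auxiliary volume bound $|\Omega|\le\omega_N\,L_\Omega^N$ coming from \eqref{contained} is what restores it. The analytic ingredients — the $\delta\to 0^+$ asymptotics and the Bessel expansion \eqref{Bessel-behavior} — are routine once Theorem \ref{thm:asymptotics} is in hand.
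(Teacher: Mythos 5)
Your derivation of the inequality \eqref{DWisop} is exactly the paper's: multiply \eqref{DWboundary} by $\delta^2$, let $\delta\to 0^+$, and use Theorem \ref{thm:asymptotics} together with \eqref{Bessel-behavior}; your bookkeeping of the exponents is correct. Where you genuinely diverge is in the equality discussion. The paper stays inside the proof of Theorem \ref{thm:upper-bound-convex}: it re-runs the estimate \eqref{ubenlarge} keeping the discarded term, obtains the enhanced bound
\[
T(\Omega;\delta)\le \left(\frac{r_\Omega}{L_\Omega}\right)^{N-2}\left(\frac{I_{N/2}(\delta L_\Omega)}{I_{N/2}(\delta r_\Omega)}\right)^2\left[T(B_{L_\Omega};\delta)-\delta^2\int_{B_{L_\Omega}\setminus\Omega}(u_{B_{L_\Omega},\delta})^2\,dx\right],
\]
and passes to the limit to get \eqref{DWisop} with the extra negative term proportional to $|B_{L_\Omega}\setminus\Omega|$, forcing $|B_{L_\Omega}\setminus\Omega|=0$ at equality. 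You instead give an independent elementary proof of \eqref{DWisop} by chaining $\mathcal{H}^{N-1}(\partial\Omega)\le N|\Omega|/r_\Omega$ (the divergence-theorem bound already recorded in the paper) with $|\Omega|\le\omega_N L_\Omega^N$ from \eqref{contained}, and read off rigidity from the second link; the conclusion $\Omega=B_{L_\Omega}(x_\Omega)$ from $|B_{L_\Omega}(x_\Omega)\setminus\Omega|=0$ for an open convex $\Omega$ is the same closing step the paper uses. Both arguments are correct. Yours is shorter and, once noticed, makes the limiting argument for the inequality itself redundant; the paper's version has the merit of showing that the rigidity is already encoded in the $\delta$-dependent Theorem \ref{thm:upper-bound-convex} and of producing a quantitative remainder $|B_{L_\Omega}\setminus\Omega|$, whereas your route reveals that Corollary \ref{cor:geometric} is in fact a purely elementary consequence of convexity, independent of the boundary torsional rigidity altogether.
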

\begin{proof}
We multiply inequality \eqref{DWboundary} by $\delta^2$ and then take the limit as $\delta$ goes to $0$. By using Theorem \ref{thm:asymptotics} and \eqref{Bessel-behavior}, we get
\[
\frac{(\mathcal{H}^{N-1}(\partial\Omega))^2}{|\Omega|}\le \left(\frac{L_\Omega}{r_\Omega}\right)^2\,\frac{(\mathcal{H}^{N-1}(\partial B_{L_\Omega}))^2}{|B_{L_\Omega}|}.
\]
This gives the desired inequality \eqref{DWisop}.
\par
It is straightforward to see that balls give equality in \eqref{DWisop}. On the other hand, let us suppose that $\Omega$ is an open bounded convex set which attains equality in \eqref{DWisop}. In order to prove that $\Omega$ must be a ball, we go back to \eqref{ubenlarge}: with a slightly more careful estimate, we can obtain
\[
\begin{split}
T(\Omega;\delta)&\le \frac{1}{(C_{\Omega,\delta})^2}\,\left[\int_\Omega |\nabla u_{B_{L_\Omega},\delta}|^2\,dx+\delta^2\,\int_\Omega (u_{B_{L_\Omega},\delta})^2\,dx\right]\\
&\le \frac{1}{(C_{\Omega,\delta})^2}\,\left[\int_{B_{L_\Omega}} |\nabla u_{B_{L_\Omega},\delta}|^2\,dx+\delta^2\,\int_{B_{L_\Omega}} (u_{B_{L_\Omega},\delta})^2\,dx\right]\\
&-\frac{\delta^2}{(C_{\Omega,\delta})^2}\,\int_{B_{L_\Omega}\setminus\Omega}(u_{B_{L_\Omega},\delta})^2\,dx\\
&=\frac{T(B_{L_\Omega};\delta)}{(C_{\Omega,\delta})^2}-\frac{\delta^2}{(C_{\Omega,\delta})^2}\,\int_{B_{L_\Omega}\setminus\Omega}(u_{B_{L_\Omega},\delta})^2\,dx.
\end{split}
\]
By recalling the value of $C_{\Omega;\delta}$, we get the following enhanced version of \eqref{DWboundary}
\[
T(\Omega;\delta)\le \left(\frac{r_\Omega}{L_\Omega}\right)^{N-2}\,\left(\frac{I_{N/2}(\delta\,L_\Omega)}{I_{N/2}(\delta\,r_\Omega)}\right)^2\,\left[T(B_{L_\Omega};\delta)-\delta^2\,\int_{B_{L_\Omega}\setminus\Omega}(u_{B_{L_\Omega},\delta})^2\,dx\right].
\]
As in the first part of the proof, we multiply both sides by $\delta^2$ and then take the limit as $\delta$ goes to $0$. By Theorem \ref{thm:asymptotics}, we now get the enhanced version of \eqref{DWisop}
\[
\frac{(\mathcal{H}^{N-1}(\partial\Omega))^2}{|\Omega|}\le \left(\frac{L_\Omega}{r_\Omega}\right)^2\,\left[\frac{(\mathcal{H}^{N-1}(\partial B_{L_\Omega}))^2}{|B_{L_\Omega}|}-\left(\frac{\mathcal{H}^{N-1}(\partial B_{L_\Omega})}{|B_{L_\Omega}|}\right)^2\,|B_{L_\Omega}\setminus\Omega|\right].
\]
Thus, if $\Omega$ attains equality in \eqref{DWisop}, in particular it must result
\[
|B_{L_\Omega}\setminus\Omega|=0,
\]
which implies that $\Omega$ is a ball.
\end{proof}

\appendix

\section{Proximal radius}

We give in this section some elementary facts about the proximal radius of a convex set. We start with the following
\begin{lem}
\label{lem:proximal}
Let $\Omega\subset\mathbb{R}^N$ be an open bounded convex set. Then there exists a unique proximal center.
\end{lem}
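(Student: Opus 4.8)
The plan is to reduce the whole statement to the study of a single scalar function on $\Omega$, namely
\[
f(x)=\sup_{y\in\Omega}|x-y|=\max_{y\in\overline\Omega}|x-y|,
\]
the radius of the smallest ball centered at $x$ that contains $\Omega$. First I would record the elementary facts: $f$ is finite (as $\Omega$ is bounded) and $1$-Lipschitz; the supremum over the \emph{open} set $\Omega$ is never attained, so that $\Omega\subset B_R(x)$ holds exactly when $R\ge f(x)$. Combining this with the definition of $L_\Omega$ gives $L_\Omega=\inf_{x\in M(\Omega)}f(x)$, and shows that a proximal center is precisely a point of $M(\Omega)$ at which $f$ attains this infimum. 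Thus the lemma becomes: \emph{$f$ has exactly one minimizer on $M(\Omega)$.}

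Next I would set up the geometry of the high ridge set. I claim $M(\Omega)$ is non-empty, bounded, closed and convex. Non-emptiness: the distance function $d_\Omega$ extends continuously to $\overline\Omega$ vanishing on $\partial\Omega$, hence attains its maximum $r_\Omega>0$ at some $x_\star\in\Omega$, and $B_{r_\Omega}(x_\star)$, being connected, meeting $\Omega$ and disjoint from $\partial\Omega$, lies in $\Omega$. Boundedness is clear since $M(\Omega)\subset\Omega$. Closedness: if $x_n\to x_0$ with $B_{r_\Omega}(x_n)\subset\Omega$, then any $y$ with $|y-x_0|<r_\Omega$ lies in $B_{r_\Omega}(x_n)$ for $n$ large, so $y\in\Omega$. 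Convexity: writing $B_{r_\Omega}(x_t)=(1-t)B_{r_\Omega}(x_0)+t\,B_{r_\Omega}(x_1)$ for $x_t=(1-t)x_0+tx_1$, each point of $B_{r_\Omega}(x_t)$ is a convex combination of a point of $B_{r_\Omega}(x_0)\subset\Omega$ and a point of $B_{r_\Omega}(x_1)\subset\Omega$, hence lies in $\Omega$ by convexity. Existence of a proximal center then follows by the direct method: take a minimizing sequence in the bounded set $M(\Omega)$, extract a convergent subsequence, and use that $M(\Omega)$ is closed and $f$ continuous; note $L_\Omega\ge r_\Omega>0$.

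For uniqueness — which I expect to be the actual content — suppose $a\neq b$ are both proximal centers, so $f(a)=f(b)=L_\Omega$, and consider $m=\tfrac12(a+b)$, which belongs to $M(\Omega)$ by convexity, hence satisfies $f(m)\ge L_\Omega$. Pick $y_\star\in\overline\Omega$ realizing $f(m)=|m-y_\star|$. Writing $m-y_\star=\tfrac12\big[(a-y_\star)+(b-y_\star)\big]$ and chaining the triangle inequality,
\[
L_\Omega\le f(m)=|m-y_\star|\le \tfrac12\big(|a-y_\star|+|b-y_\star|\big)\le \tfrac12\big(f(a)+f(b)\big)=L_\Omega,
\]
all inequalities must be equalities; in particular $|a-y_\star|=|b-y_\star|=L_\Omega>0$ and equality holds in the triangle inequality. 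This last equality is the crux: by strict convexity of the Euclidean norm, $|u+v|=|u|+|v|$ with $u,v\neq0$ forces $v=\lambda u$ for some $\lambda>0$; applying this with $u=a-y_\star$, $v=b-y_\star$ and comparing norms yields $\lambda=1$, hence $a=b$, a contradiction. Two minor points I would be careful about along the way: distinguishing the open ball $B_{L_\Omega}(x_\Omega)$ from its closure (so that $f(x_\Omega)=L_\Omega$ genuinely gives $\Omega\subset B_{L_\Omega}(x_\Omega)$), and justifying that the supremum defining $f$ is never attained on $\Omega$ — both are consequences of $\Omega$ being open.
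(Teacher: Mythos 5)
Your proof is correct and follows essentially the same route as the paper: you recast a proximal center as a minimizer of the $1$-Lipschitz enclosing-radius function over the compact convex set $M(\Omega)$, obtain existence by the direct method, and obtain uniqueness from a midpoint argument. The only divergence is in the uniqueness step, where you analyze the equality case of the triangle inequality at a farthest point from the midpoint, whereas the paper uses the parallelogram law to produce a strictly smaller enclosing ball centered at the midpoint, contradicting the minimality of $L_\Omega$; both hinge on the strict convexity of the Euclidean ball and your variant is equally valid.
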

\begin{proof}
We first observe that the high ridge set $M(\Omega)$ is a closed convex set. This follows from the fact that it coincides with the level set $\{x\in\overline\Omega\, :\, d_\Omega(x)=r_\Omega\}$ and the distance function is continuous and concave on $\overline\Omega$. We now claim that
\begin{equation}
\label{other}
L_\Omega=\inf_{x\in M(\Omega)} \left(\max_{y\in\partial\Omega} |x-y|\right),
\end{equation}
and that the infimum is attained by a proximal center. Indeed, first observe that the function
\[
x\mapsto \max_{y\in\partial\Omega} |x-y|,
\]
is $1-$Lipschitz, as a supremum of a family of $1-$Lipschitz uniformly bounded functions. By recalling that $M(\Omega)$ is compact, existence of a minimizer follows from Weierstrass' Theorem.
\par
Let us now take $x_0\in M(\Omega)$ to be one of these minimizers. By taking the ball $B_R(x_0)$ with $R=\max_{y\in\partial\Omega} |x_0-y|$, we clearly have
\[
\Omega\subset B_R(x_0),
\]
and thus
\[
L_\Omega\le R=\max_{y\in\partial\Omega} |x_0-y|=\min_{x\in M(\Omega)} \left(\max_{y\in\partial\Omega} |x-y|\right).
\]
In order to prove the reverse inequality, let $r>0$ be such that there exists $x_r\in M(\Omega)$ with the property
\[
\Omega\subset B_r(x_r).
\]
This implies that
\[
|x_r-y|\le r,\qquad \mbox{ for every } y\in\partial\Omega,
\]
and thus
\[
\min_{x\in M(\Omega)} \left(\max_{y\in\partial\Omega} |x-y|\right)\le r.
\]
By taking the infimum over admissible $r>0$, we conclude the proof of \eqref{other}.
\par
The previous part of the proof also shows the existence of at least a proximal center $x_0\in M(\Omega)$. Let us suppose that $x_1\in M(\Omega)$ is another proximal center. This implies that
\[
\Omega\subset B_{L_\Omega}(x_0)\cap B_{L_\Omega}(x_1).
\]
In particular, such an intersection is not empty and thus $L_\Omega>|x_0-x_1|/2$.
By convexity of $M(\Omega)$, the midpoint $\overline{x}=(x_0+x_1)/2$ still belongs to $M(\Omega)$. If we define
\[
\widetilde{R}=\sqrt{L_\Omega^2-\frac{|x_0-x_1|^2}{2}},
\]
we have $0<\widetilde{R}<L_\Omega$ and by construction
\[
\Omega\subset B_{L_\Omega}(x_0)\cap B_{L_\Omega}(x_1)\subset B_{\widetilde R}(\overline{x}).
\]
This violates the minimality of $L_\Omega$, thus giving the desired contradiction.
\end{proof}
The proximal radius is actually comparable to more familiar geometric quantities, like the diameter $\mathrm{diam}(\Omega)$ and the {\it circumradius} $R_\Omega$. The latter is defined by
\[
R_\Omega=\inf\Big\{R>0\, :\, \exists x\in \mathbb{R}^N \mbox{ such that } \Omega\subset B_R(x)\Big\}.
\]
This is the radius of the smallest ball entirely containing $\Omega$.
\begin{lem}
Let $\Omega\subset\mathbb{R}^N$ be an open bounded convex set. Then we have
\[
R_\Omega\le L_\Omega< \mathrm{diam}(\Omega),
\]
and both inequalities are sharp.
\end{lem}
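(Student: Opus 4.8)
The plan is to handle the two inequalities separately and then discuss sharpness. The first inequality, $R_\Omega\le L_\Omega$, is immediate from the definitions: the infimum defining $L_\Omega$ ranges over exactly the same condition (``there is $R>0$ and a point $x_0$ with $\Omega\subset B_R(x_0)$'') as the one defining $R_\Omega$, except that the center $x_0$ is required to lie in $M(\Omega)\subset\mathbb{R}^N$. Imposing an extra restriction on the set over which one minimizes can only increase the value, hence $R_\Omega\le L_\Omega$.

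For the strict inequality $L_\Omega<\mathrm{diam}(\Omega)$, I would invoke the proximal center $x_\Omega\in M(\Omega)$, which exists and is unique by Lemma~\ref{lem:proximal}. From the proof of that lemma one has $L_\Omega=\max_{y\in\partial\Omega}|x_\Omega-y|$, and since $\partial\Omega$ is compact this maximum is attained at some $y_0\in\partial\Omega$. Because $x_\Omega\in M(\Omega)$ we have $B_{r_\Omega}(x_\Omega)\subset\Omega$ with $r_\Omega>0$; I would then consider the point antipodal to $y_0$ on the sphere $\partial B_{r_\Omega}(x_\Omega)$,
\[
z_0:=x_\Omega-r_\Omega\,\frac{y_0-x_\Omega}{|y_0-x_\Omega|},
\]
which lies in $\overline{B_{r_\Omega}(x_\Omega)}\subset\overline{\Omega}$. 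By construction $x_\Omega$ belongs to the segment $[z_0,y_0]$, so $|z_0-y_0|=|z_0-x_\Omega|+|x_\Omega-y_0|=r_\Omega+L_\Omega$. Since $z_0,y_0\in\overline{\Omega}$ and $\mathrm{diam}(\Omega)=\mathrm{diam}(\overline{\Omega})$, this yields $\mathrm{diam}(\Omega)\ge r_\Omega+L_\Omega>L_\Omega$.

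For sharpness, equality in $R_\Omega\le L_\Omega$ is attained by every ball: if $\Omega=B_R(x_0)$ then $M(\Omega)=\{x_0\}$ and $r_\Omega=R$, so $L_\Omega=R=R_\Omega$. For the second, strict, inequality I would show instead that $L_\Omega/\mathrm{diam}(\Omega)$ can be made arbitrarily close to $1$. In the plane, take the thin isosceles triangle $T_\varepsilon=\mathrm{conv}\{(0,0),(1,\varepsilon),(1,-\varepsilon)\}$. Since the set of centers of maximal inscribed balls of a triangle is a single point, namely its incenter, $x_{T_\varepsilon}$ is the incenter, which by symmetry is $(c_\varepsilon,0)$ with $c_\varepsilon=\sqrt{1+\varepsilon^2}\,/\,(\varepsilon+\sqrt{1+\varepsilon^2})$. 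For $\varepsilon$ small the farthest point of $\overline{T_\varepsilon}$ from this center is the apex $(0,0)$, so $L_{T_\varepsilon}=c_\varepsilon$, whereas $\mathrm{diam}(T_\varepsilon)=\sqrt{1+\varepsilon^2}$; hence $L_{T_\varepsilon}/\mathrm{diam}(T_\varepsilon)=1/(\varepsilon+\sqrt{1+\varepsilon^2})\to1$ as $\varepsilon\to0^+$. In dimension $N\ge3$ I would replace $T_\varepsilon$ by a thin cone, for instance the convex hull of the origin and the $(N-1)$-dimensional disk $\{1\}\times\{z\in\mathbb{R}^{N-1}:|z|\le\varepsilon\}$: by rotational symmetry the set of its incenters is a single point on the axis near the base, and an analogous computation gives $L_\Omega/\mathrm{diam}(\Omega)\to1$.

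The only step that requires genuine care is the verification of the sharpness examples. One must check that for $T_\varepsilon$ (and for its higher-dimensional analogue) the farthest boundary point from the proximal center is really the apex — this is exactly where smallness of $\varepsilon$ enters, as it amounts to $c_\varepsilon$ exceeding the distance from the incenter to the two remaining vertices — and in the cone case one must first pin down the location of the incenter. These are elementary exercises in planar and solid geometry, but they constitute the most computational part of the argument; everything else is essentially definitional.
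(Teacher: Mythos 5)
Your proposal is correct and follows essentially the same route as the paper: the first inequality is definitional, the second uses that the proximal center lies at distance $r_\Omega$ from $\partial\Omega$ (your antipodal-point argument even yields the slightly stronger quantitative bound $\mathrm{diam}(\Omega)\ge L_\Omega+r_\Omega$), and sharpness is exhibited by degenerating thin triangles whose incenter drifts toward the short side. The paper uses a family of thin right triangles and also restricts the sharpness example to $N=2$, so your isosceles family and the sketched cone for $N\ge 3$ are entirely in the same spirit.
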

\begin{proof}
The first inequality $R_\Omega\le L_\Omega$ is a trivial consequence of the definitions. We also observe that we have equality for every convex set such that $R_\Omega$ is attained by a ball centered on $M(\Omega)$: this happens for example for every convex sets having $N$ orthogonal axis of symmetry.
\par
For the second inequality, for every $y\in\partial\Omega$ and every $x\in M(\Omega)$ we have
\[
|x-y|<\mathrm{diam}(\Omega),
\]
by definition of diameter. Observe that the inequality is strict, since $M(\Omega)$ is at a distance $r_\Omega$ (i.e. the inradius) from the boundary $\partial\Omega$, thus its points can not be extremal for the diameter.
By recalling \eqref{other}, this gives the desired inequality.
\par
In order to prove sharpness of this estimate, we confine ourselves for simplicity to $N=2$. We take the sequence of right triangles $\{T_n\}_{n\in\mathbb{N}}$ having vertices at
\[
P_1=(-1,0),\qquad P_2=(1,0),\qquad P_3=\left(\cos\left(\frac{1}{n+1}\right),\,\sin\left(\frac{1}{n+1}\right)\right).
\]
We clearly have that $\mathrm{diam}(T_n)=2$. In order to compute $L_{T_n}$, we observe that the high ridge set is a singleton
\[
M(T_n)=\left\{(\alpha_n,\beta_n)\right\},\qquad \mbox{ with } \lim_{n\to\infty} \alpha_n=1,\quad \lim_{n\to\infty} \beta_n=0.
\]
Thus we have
\[
\lim_{n\to\infty}L_{T_n}=\lim_{n\to\infty} |(\alpha_n,\beta_n)-P_1|=\lim_{n\to\infty} \sqrt{(\alpha_n+1)^2+\beta_n^2}=2,
\]
as desired.
\end{proof}

\bigskip

\end{document}